\newtheorem{theorem}{Theorem}[section]
\newtheorem{lemma}[theorem]{Lemma}
\newtheorem{proposition}[theorem]{Proposition}
\newtheorem{corollary}[theorem]{Corollary}
\newtheorem{example}[theorem]{Example}
\newenvironment{proof}[1][Proof]{\begin{trivlist}
\item[\hskip \labelsep {\bfseries #1}]}{\end{trivlist}}
\newenvironment{definition}[1][Definition]{\begin{trivlist}
\item[\hskip \labelsep {\bfseries #1}]}{\end{trivlist}}
\newenvironment{remark}[1][Remark]{\begin{trivlist}
\item[\hskip \labelsep {\bfseries #1}]}{\end{trivlist}}
\newcommand{\qed}{\nobreak \ifvmode \relax \else
      \ifdim\lastskip<1.5em \hskip-\lastskip
      \hskip1.5em plus0em minus0.5em \fi \nobreak
      \vrule height0.75em width0.5em depth0.25em\fi}
\newcommand{\ech}{\color{black}  ~}    
\DeclareMathOperator{\argmin}{argmin}
\newcommand{\bea}{\begin{eqnarray*}}
\newcommand{\eea}{\end{eqnarray*}}
\newcommand{\bean}{\begin{eqnarray}}
\newcommand{\eean}{\end{eqnarray}}
\newcommand{\baln}{\begin{align}}
\newcommand{\ealn}{\end{align}}
\newcommand{\bdefi}{\begin{definition}}
\newcommand{\edefi}{\end{definition}}
\newcommand{\bi}{\begin{itemize}}
\newcommand{\ei}{\end{itemize}}
\newcommand{\benu}{\begin{enumerate}}
\newcommand{\eenu}{\end{enumerate}}
\newcommand{\ben}{\begin{enumerate}}
\newcommand{\een}{\end{enumerate}}
\newcommand{\V}{{\rm Var}}
\newcommand{\C}{{\rm Cov}}
\newcommand{\sg}{\Sigma}
\newcommand{\what}{\widehat}
\newcommand{\bfX}{{\bf X}}
\newcommand{\bbE}{\mathbb{E}}
\newcommand{\bbP}{\mathbb{P}} 
\newcommand{\bbR}{\mathbb{R}}
\newcommand{\cC}{\mathcal{C}}
\newcommand{\cG}{\mathcal{G}}
\newcommand{\cU}{\mathcal{U}}
\newcommand{\lra}{\longrightarrow}
\newcommand{\calC}{\mathcal{C}}
\begin{document}

\title{Estimating Large Precision Matrices via Modified Cholesky Decomposition}
\author[1]{Kyoungjae Lee}
\author[2]{Jaeyong Lee}
\affil[1]{Department of Applied and Computational Mathematics and Statistics, The University of Notre Dame}
\affil[2]{Department of Statistics, Seoul National University}

\maketitle
\begin{abstract}
We introduce the $k$-banded Cholesky prior for estimating a high-dimensional bandable precision matrix via the modified Cholesky decomposition.  
The bandable assumption is imposed on the Cholesky factor of the decomposition. 
We obtained the P-loss convergence rate under the spectral norm and the matrix $\ell_{\infty}$ norm and the minimax lower bounds. Since the P-loss convergence rate (Lee and Lee (2017)\nocite{lee2017optimal}) is stronger than the posterior convergence rate, the rates obtained are also posterior convergence rates. 
Furthermore, when the true precision matrix is a $k_0$-banded matrix with some finite $k_0$, the obtained P-loss convergence rates coincide with the minimax rates.
The established convergence rates are slightly slower than the minimax lower bounds, but these are the fastest rates for bandable precision matrices among the existing Bayesian approaches. 
A simulation study is conducted to compare the performance to the other competitive estimators in various scenarios.
\end{abstract}

Key words: P-loss Convergence rate; Precision matrix; Modified Cholesky decomposition.

\section{Introduction}
Due to the technical advances, it is common that the number of variables $p$ of the data sets collected in recent years is much larger than their sample size $n$.
Examples of such high-dimensional data sets arise from genomics, climatology, fMRI and neuroimaging, to name just a few. 
In this paper, we concentrate on the estimation of the precision matrix, the inverse of the covariance matrix, for a high-dimensional data.
In the analysis of dependent data, often the estimation of the covariance or precision matrix is a crucial initial step of subsequent analysis,  for example  principle component analysis (PCA), linear/quadratic discriminant analysis and multivariate analysis of variance (MANOVA).

When the number of variables $p$ tends to infinity as $n\lra \infty$ and is even possibly larger than $n$, the traditional sample covariance fails to converge to the true covariance marix (Johnstone and Lu, 2009)\nocite{johnstone2009consistency}.\ech
It becomes necessary to assume restrictive matrix classes to get a consistent estimator under the ultra high-dimensional setting, $\log p=o(n)$.
The restriction on the matrix class includes the sparse, bandable assumption or lower-dimensional structure such as sparse spiked covariance and factor model. 
The minimax convergence rates under the sparsity or bandable assumption on a covariance/precision matrix itself were established by Bickel and Levina (2008a, 2008b)\nocite{bickel2008covariance},  Cai, Zhang and Zhou (2010)\nocite{cai2010optimal}, Cai and Zhou (2012a, 2012b)\nocite{cai2012minimax}\nocite{cai2012optimal}, Xue and Zou (2013)\nocite{xue2013minimax} and Cai, Liu and Zhou (2016)\nocite{cai2016estimating}, to just name a few. 
The convergence rates for precision matrices under the sparsity or bandable assumption via Cholesky decomposition were studied by Bickel and Levina (2008b) and Verzelen (2010)\nocite{verzelen2010adaptive}. 
The convergence rates under lower-dimensional structures of covariance matrix such as factor model (Fan, Fan and Lv, 2008)\nocite{fan2008high} and sparse spiked covariance model (Cai, Ma and Wu, 2015)\nocite{cai2015optimal} were also explored. 
Cai, Liang and Zhou (2015)\nocite{cai2015law} and Fan, Rigollet and Wang (2015)\nocite{fan2015estimation} derived the minimax convergence rates for the functionals of the covariance matrices. 
Cai, Ren and Zhou (2016)\nocite{cai2016review} provided a comprehensive review on the convergence rate for large matrices.

The posterior convergence rates for large covariance or precision matrices have been received attention, but there are only a few works available in high-dimensional settings. 
Banerjee and Ghosal (2015)\nocite{banerjee2015bayesian} showed the posterior convergence rate for the precision matrix under the sparsity assumption. They used a mixture prior for off-diagonal elements of the precision matrix to assign exactly zero.
To estimate bandable precision matrices, Banerjee and Ghosal (2014)\nocite{banerjee2014posterior} utilized the $G$-Wishart prior on the precision matrix and established the posterior convergence rate. 
Xiang, Khare and Ghosh (2015)\nocite{xiang2015high} extended the result of Banerjee and Ghosal (2014) to decomposable graphical models which contains the bandable precision matrices as a special case.
Pati et al. (2014)\nocite{pati2014posterior} considered the posterior convergence rate for covariance estimation via the sparse factor model. They obtained nearly optimal rates,  the minimax rates with $\sqrt{\log n}$ factor when the number of true factors is bounded.
The optimal posterior convergence rate for covariance matrices of sparse spiked covariance model was derived by Gao and Zhou (2015)\nocite{gao2015rate}. 
The above results assumed the ultra high-dimensional setting, $\log p =o(n)$, or its variants. 
Recently, Gao and Zhou (2016)\nocite{gao2016bernstein} derived Bernstein-von Mises theorems for functionals of the covariance matrix as well as its inverse, under conditions such as $p=o(n)$ or $p^3=o(n)$.

Lee and Lee (2017)\nocite{lee2017optimal} proposed a new decision theoretical framework for prior selection and obtained the Bayesian minimax rate of the unconstrained covariance matrix under the spectral norm for all rates of $p$. The Bayesian minimax rates under the Frobenius norm, the Bregman divergence and squared log-determinant loss were also obtained when $p\le \sqrt{n}$ or $p =o(n)$. Lee and Lee (2017) showed that when $p > n/2$, there is no better prior than the point mass prior $\delta_{I_p}$ in terms of the induced posterior convergence rate. Thus, it implies that a certain restriction on the covariance or precision matrix is needed for the consistent estimation.

In this paper, we consider a class of bandable precision matrices via the modified Cholesky decomposition (MCD) under the ultra high-dimensional setting, and derive the P-loss convergence rates under the spectral norm and matrix $\ell_\infty$ norm. 
The bandable assumption is imposed on the lower triangular matrix from the MCD, which is called the Cholesky factor. 
Bickel and Levina (2008b)\nocite{bickel2008regularized} used a similar assumption: their parameter space is a special case of ours.
Our work is also closely related to the works of Banerjee and Ghosal (2014)\nocite{banerjee2014posterior} and Xiang, Khare and Ghosh (2015)\nocite{xiang2015high} for they considered the bandable precision matrices. 
However, we emphasize that the convergence rate obtained in this paper is faster than those obtained in the above papers. To the best of our knowledge, the convergence rate in this paper is the fastest rate for the bandable precision matrices among existing Bayesian methods.
Although our parameter space is not exactly same as that of Banerjee and Ghosal (2014), they are closely related. Proposition \ref{equi_para} describes the relationship between them.  Furthermore, we show the minimax lower bound for precision matrices under the $k$-banded assumption on the Cholesky factor. The lower bounds are derived under the spectral norm as well as matrix $\ell_\infty$ norm. To the best of our knowledge, this is the first result on minimax lower bound for precision matrices under the bandable assumption on the Cholesky factor.

The rest of the paper is organized as follows. In section \ref{prelim}, we define our model, matrix norms, the parameter class and the decision theoretic prior selection. 
The convergence rates for precision matrices under the spectral norm and matrix $\ell_\infty$ norm are shown in section \ref{prec_main}. 
In section \ref{kchoice}, the practical choice of the bandwidth is proposed, and we conduct a simulation study in section \ref{prec_simul}. 
Discussion is given in section \ref{prec_disc}, and the proofs of the main results are in section \ref{proofsec}.

\section{Preliminaries}\label{prelim}
\subsection{Norms and Notations}
For any constants $a$ and $b$, $a \vee b$  denotes the maximum of $a$ and $b$. 
For any positive sequences $a_n$ and $b_n$, we denote $a_n =o(b_n)$ if $a_n/b_n \lra 0$ as $n\to \infty$. 
We denote $a_n \asymp b_n$ if there exist positive constants $C_1$ and $C_2$ such that $C_1 \le a_n/b_n \le C_2$ for all sufficiently large $n$, and  $a_n \lesssim b_n$ if there exists  a positive constant $C$ such that $a_n \le C b_n$ for all sufficiently large $n$.
For any $p\times p$ matrix $A$, $\lambda_{\min}(A)$ and $\lambda_{\max}(A)$ denote the minimum and maximum eigenvalue of the matrix $A$, respectively.

For any $p$-dimensional vector $a=(a_1,\ldots, a_p)\in\bbR^p$ and $p\times p$ matrix $A=(a_{ij})$, $B_{k,j}(a)$ and $B_k(A)$ are defined by  $B_{k,j}(a):=(b_i=a_i I(|i-j| \le k), 1\le i \le p)$ and $B_k(A) := (b_{ij}=a_{ij}I(|i-j|\le k), 1\le i,j \le p)$, respectively.

For any $p$-dimensional vector $a$, we define vector norms as follows: $\|a\|_1:= \sum_{i=1}^p |a_i|$, $\|a\|_2 := (\sum_{i=1}^pa_i^2)^{1/2}$ and $\|a\|_{\max}:=\max_{1\le i\le p}|a_i|$. With these norms, we define the operator norms for matrices. Let $A=(a_{ij})$ be a $p\times p$ matrix. The spectral norm (or matrix $\ell_2$ norm) is defined by
\bea
\|A\| &:=& \sup_{x\in\bbR^p \atop \|x\|_2=1} \|Ax \|_2 \,\,=\,\, (\lambda_{\max}(A^T A))^{1/2}.
\eea
We define the matrix $\ell_1$ norm, matrix $\ell_\infty$ norm and Frobenius norm by
\bea
\|A\|_1 &:=& \sup_{x\in\bbR^p \atop \|x\|_{1}=1} \|Ax\|_{1} \,\,=\,\, \max_{j} \sum_{i=1}^p |a_{ij}|, \\
\|A\|_\infty &:=& \sup_{x\in\bbR^p \atop \|x\|_{\max}=1} \|Ax\|_{\max} \,\,=\,\, \max_{i} \sum_{j=1}^p |a_{ij}|,\\
\|A\|_F &:=& \Big( \sum_{i=1}^p \sum_{j=1}^p a_{ij}^2 \Big)^{1/2},
\eea
respectively.
The max norm for matrices is defined by $\|A\|_{\max}:= \max_{i,j}|a_{ij}|$. 

\subsection{The Model and the Prior}
Suppose we observe a data set from the $p$-dimensional normal distribution
\bean\label{prec_model}
X_1,\ldots,X_n &\overset{iid}{\sim}& N_p(0, \sg_n),
\eean
where $\sg_n$ is a $p \times p$ positive definite matrix. We assume that $p = p_n$ is a function of $n$ increasing to $\infty$ as $n\to \infty$
and possibly $n = o(p)$. The unknown $p\times p$ true covariance matrix is denoted by $\sg_{0,n}$. 

For a $p\times p$ positive definite matrix $\Omega_n := \sg_n^{-1}$, the MCD guarantees that there uniquely exist a lower triangular matrix $A_n=(a_{jl})$ and a diagonal matrix $D_n=diag(d_j)$ such that
\bean\label{chol}
\Omega_n &=& (I_p - A_n)^T D_n^{-1} (I_p - A_n) ,
\eean
where $a_{jj} =0$ and $d_j >0$ for all $j=1,\ldots, p$. Note that the model \eqref{prec_model} with a precision matrix \eqref{chol} is equivalent to the following autoregressive model
\bea
X_{i1} &\overset{iid}{\sim}& N(0, d_1), \\
X_{ij} &\overset{ind}{\sim}& N \left( a_j^T Z_{ij} = \sum_{l=1}^{j-1}a_{jl}X_{il}, d_j \right) ,~ ~i,=1,\ldots,n,~j=2,\ldots,p ,
\eea
where $a_j:=(a_{j1},\ldots,a_{j,j-1})^T$ and $Z_{ij} := (X_{i1},\ldots, X_{i,j-1})^T$. 
Let $Y = (Y_1,\ldots, Y_p)^T \sim N_p(0, \sg)$, $Z_j = (Y_1,\ldots, Y_{j-1})^T$ and $\sg^{-1} = \Omega = (I_p -A)^T D^{-1}(I_p- A)$, then it is easy to check that, by the construction, the explicit forms of $a_j$ and $d_j$ are
\bea
a_j &=&  \V^{-1}(Z_j) \C(Z_j, Y_j) ,  \\
d_j &=& \V(Y_j) - \C(Y_j, Z_j) \V^{-1}(Z_j)\C(Z_j, Y_j) , ~j=2,\ldots,p,
\eea
where $d_1 = \V(Y_1)$. 
Bickel and Levina (2008b)\nocite{bickel2008regularized} approximated the precision matrix by considering only $k$ closest regressors $Z_{ij}^{(k)} := (X_{i,(j-k\vee 1)}, \ldots, X_{i,j-1})^T$ for each $X_{ij}$. It gives the new coefficient vector $a_j^{(k)} := \V^{-1}(Z_j^{(k)}) \C(Z_j^{(k)}, X_j)$. 
This is the same as assuming the lower triangle matrix $A_{n}$ in the MCD to be the $k$-banded lower triangular matrix. The resulting precision matrix 
$\Omega_{n} :=(I_p - A_{n})^T D_{n}^{-1}(I_p - A_{n})$ also becomes a $k$-banded matrix.

Bickel and Levina (2008b)\nocite{bickel2008regularized} suggested the ordinary least square estimators for $A_{n}$ and corresponding variance estimator $D_{n}$ under the $k$-banded assumption on $A_n$. Based on the least square estimators, they showed the convergence rates of covariance and precision matrix when $\log p = o(n)$.

In this paper, we suggest the following prior distribution
\begin{eqnarray}\label{kBCprior}
\begin{split}
\pi (a_{jl}) \,\,&\propto\,\, 1 ,~~ l=(j-k\vee 1),\ldots,j-1 ,\\
d_{j} \,\,&\overset{ind}{\sim}\,\, d_{j}^{\nu_0/2 -1} I(0< d_{j} < M) , ~~ j=1,\ldots,p ,
\end{split}
\end{eqnarray}
for some non-negative constants $M$ and $\nu_0$. We call the prior \eqref{kBCprior} the $k$-banded Cholesky ($k$-BC) prior.
The appropriate condition on $M$ and $\nu_0$ will be discussed in section \ref{prec_main}.

The prior \eqref{kBCprior} leads to the following joint posterior distribution,
\begin{eqnarray}\label{post}
\begin{split}
d_{j} \mid \bfX_n \,\,&\overset{ind}{\sim} \,\, IG^{Tr} \left( d_{j}\mid \frac{n_j}{2}, \frac{n}{2}  \what{d}_{jk}  ,  d_{j}\le M \right) , \quad j=1,\ldots, p \\
a_j \mid d_{j}, \bfX_n \,\,&\overset{ind}{\sim}\,\, N_{\min(j-1, k)} \left( a_j \mid  \what{a}_j^{(k)}   , ~\frac{d_{j}}{n}\what{\V}^{-1}(Z_j^{(k)})  \right) \quad j=2,\ldots,p,
\end{split}
\end{eqnarray}
where $\bfX_n := (X_1,\ldots,X_n)^T$, $n_j := n+ \nu_0- \min(j-1,k) -4$, $\what{d}_{1k}:=\what{\V}(X_1)$, $\what{\V}(X_j) := \frac{1}{n} \sum_{i=1}^n X_{ij}^2$, $\what{\C}(Z_j^{(k)}, X_j) := \frac{1}{n} \sum_{i=1}^n Z_{ij}^{(k)} X_{ij}$, $\what{\V}(Z_{j}^{(k)}) := \frac{1}{n} \sum_{i=1}^n Z_{ij}^{(k)} {Z_{ij}^{(k)}}^T$, 
\bea
\what{d}_{jk} &:=& \what{\V}(X_j)- \what{\C}(X_j, Z_j^{(k)}) \what{\V}^{-1}(Z_j^{(k)}) \what{\C}(Z_j^{(k)}, X_j) \quad\text{ and} \\
\what{a}_j^{(k)} &:=& \what{\V}^{-1}(Z_j^{(k)}) \what{\C}(Z_j^{(k)}, X_j),   ~~ j=2,\ldots,p.
\eea

Note that $IG(X \mid a,b)$ is the density function of the inverse-gamma random variable $X$ whose shape and rate parameters are $a$ and $b$, respectively. We denote $IG^{Tr}(X \mid a,b,A)$ as the truncated version of $IG(X\mid a,b)$ on support $A$.
$N_p(X \mid \mu, \sg)$ is the density function of the $p$-dimensional normal random variable $X$ whose mean vector and covariance matrix are $\mu$ and $\sg$, respectively.
The suggested prior on $d_{jk}$ has a compact support for a technical reason.

The zero-pattern of the Cholesky factor is related to the directed acyclic graph (DAG) (R{\"u}timann and B{\"u}hlmann, 2009\nocite{rutimann2009high}). The use of the $k$-BC prior \eqref{kBCprior} implies that we approximate the true model with a directed Gaussian graphical model. Thus, our method can be applied to directed Gaussian graphical models, but applications to graphical models will not be discussed in this paper.  
For more details about graphical models, see Lauritzen (1996)\nocite{lauritzen1996graphical}, Koller and Friedman (2009)\nocite{koller2009probabilistic} and R{\"u}timann and B{\"u}hlmann (2009).

\subsection{Parameter Class}
For a given constant $\epsilon_0 >0$ and a decreasing function $\gamma(k)\to 0$ as $k \lra \infty$, we define a class of precision matrices
\bea
\cU(\epsilon_0, \gamma) = \cU_p(\epsilon_0, \gamma)  &:=& \bigg\{ \Omega = (I_p - A)^T D^{-1}(I_p - A) \in \calC_p: \,\,\epsilon_0 \le \lambda_{\min}(\Omega)\le \lambda_{\max}(\Omega)\le \epsilon_0^{-1},\\
&&\quad\quad\quad\quad\quad  \|A - B_{k}(A)\|_\infty \le  \gamma(k),~\forall 0< k\le p-1  \bigg\} ,
\eea
where  $\calC_p$ is the class of all $p\times p$ dimensional positive definite matrices, and $A$ is a lower triangular matrix from the MCD of $\Omega$. 
Note that $\|A - B_{k}(A)\|_\infty \le  \gamma(k)$ is equivalent to $\max_{1\le i\le p} \sum_{j<i-k} |a_{ij}| \le  \gamma(k)$ where $A=(a_{ij})$.

We consider the following classes of $\gamma(k)$:
\begin{enumerate}
	\item (polynomially decreasing) $\gamma(k) = C k^{-\alpha}$ for some $\alpha>0$ and $C>0$;
	\item (exponentially decreasing) $\gamma(k) = C e^{-\beta k}$ for some $\beta>0$ and $C>0$; and
	\item (exact banding) for some $k_0>0$, $\gamma(k)=0$ for all $k > k_0$.
\end{enumerate}

Banerjee and Ghosal (2014) considered a similar parameter space for precision matrix defined by 
\bea
\cU^*(\epsilon_0, \gamma) = \cU_p^*(\epsilon_0, \gamma)  &:=& \bigg\{  \Omega= (\omega_{ij}) \in \calC_p  : 0<\epsilon_0 \le \lambda_{\min}(\Omega)\le \lambda_{\max}(\Omega)\le \epsilon_0^{-1},\\
&&\quad\quad\quad  \max_{1\le i\le p} \sum_{j:|i-j|>k} |\omega_{ij}| \le  \gamma(k),~\forall 0< k\le p-1 \bigg\}.
\eea

In fact, $\cU(\epsilon_0, \gamma)$ and $\cU^*(\epsilon_0,\gamma)$ are \emph{equivalent}, in terms of the convergence rate over them, if we consider an exponentially decreasing $\gamma (k)$ with $\beta > \log (\epsilon_0^{-2}+1)$ or an exact banding $\gamma (k)$. 
The following proposition describes the relation between them and its proof is given in Appendix.
\begin{proposition}\label{equi_para}
	Suppose $\gamma$  is a decreasing function defined on positive integers. If $\gamma$ is  exponentially decreasing  with  $\gamma(k)=Ce^{-\beta k}$ with $\beta > \log (\epsilon_0^{-2}+1)$ and $C>0$, or exact banding  for some $k_0 > 0$, then
	\bea
	\cU(\epsilon_0, C_1\gamma) \subseteq \cU^*(\epsilon_0,\gamma) \subseteq \cU(\epsilon_0, C_2\gamma)
	\eea
	for some positive constants $C_1$ and $C_2$ not depending on $p$.
\end{proposition}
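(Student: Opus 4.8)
The plan is to treat the two inclusions separately, observing first that the eigenvalue restriction $\epsilon_0\le\lambda_{\min}(\Omega)\le\lambda_{\max}(\Omega)\le\epsilon_0^{-1}$ is common to both classes, so only the two banding conditions must be matched. I would begin by collecting the structural consequences of that restriction for the decomposition \eqref{chol}: the conditional‑variance reading of $d_j$ together with Cauchy interlacing gives $d_j\in[\epsilon_0,\epsilon_0^{-1}]$; writing $\Omega^{(j)}$ for the precision matrix of $(Y_1,\dots,Y_j)$ (the inverse of the leading $j\times j$ block of $\Sigma$, whose MCD factor is the leading $j\times j$ block of $L=I-A$), interlacing gives $\omega^{(j)}_{jj}\in[\epsilon_0,\epsilon_0^{-1}]$, and reading the last column of the Cholesky identity for $\Omega^{(j)}$ yields $a_{jl}=-\omega^{(j)}_{lj}/\omega^{(j)}_{jj}$, in particular $|a_{jl}|\le\epsilon_0^{-2}$. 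The decisive preliminary is that, for the two admissible shapes of $\gamma$, the Cholesky factor has \emph{dimension‑free} induced norms $\|A\|_1\vee\|A\|_\infty\le C(\epsilon_0,\gamma)$: the first sub/super‑diagonal contributes at most $\epsilon_0^{-2}$ per row and per column, while the remaining row and column mass is bounded by a convergent series ($\sum_{t\ge1}Ce^{-\beta t}$ in the exponential case, a finite sum in the exact‑banding case); the same estimate yields $\|A-B_k(A)\|_1\lesssim\gamma(k)$, not only $\|A-B_k(A)\|_\infty\le\gamma(k)$.

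For $\cU(\epsilon_0,C_1\gamma)\subseteq\cU^*(\epsilon_0,\gamma)$, I would write $\Omega=L^TD^{-1}L$, split $L=B_k(L)+R_k$ with $R_k=L-B_k(L)=-(A-B_k(A))$, and expand $\Omega$ into four terms. Since $B_k(L)$ is lower triangular and $k$‑banded, $B_k(L)^TD^{-1}B_k(L)$ is $k$‑banded, hence annihilated by $\Omega\mapsto\Omega-B_k(\Omega)$; the three remaining terms are bounded by submultiplicativity of the induced $\ell_\infty$ norm (and $\|M^T\|_\infty=\|M\|_1$), using $\|D^{-1}\|_\infty\le\epsilon_0^{-1}$, the dimension‑free bounds on $\|B_k(L)\|_\infty$ and $\|B_k(L)\|_1$, and $\|R_k\|_\infty\vee\|R_k\|_1\lesssim C_1\gamma(k)$. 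This gives $\|\Omega-B_k(\Omega)\|_\infty\le C_3(\epsilon_0,\gamma)\,C_1\gamma(k)$ for every $k$, and choosing $C_1\le 1/C_3$ yields the inclusion.

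For $\cU^*(\epsilon_0,\gamma)\subseteq\cU(\epsilon_0,C_2\gamma)$, I would start from $a_{jl}=-\omega^{(j)}_{lj}/\omega^{(j)}_{jj}$ and $\omega^{(j)}_{jj}\ge\epsilon_0$, so $\sum_{l<j-k}|a_{jl}|\le\epsilon_0^{-1}\sum_{l<j-k}|\omega^{(j)}_{lj}|$, and then expand $\Omega^{(j)}$ by the Schur‑complement identity $\Omega^{(j)}=\Omega_{1:j,1:j}-\Omega_{1:j,\,j+1:p}\,(\Omega_{j+1:p,\,j+1:p})^{-1}\,\Omega_{j+1:p,\,1:j}$. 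The $\Omega_{1:j,1:j}$‑part contributes $\sum_{l<j-k}|\omega_{lj}|\le\gamma(k)$ since $\|\Omega-B_k(\Omega)\|_\infty=\|\Omega-B_k(\Omega)\|_1$. For the correction $\sum_{l<j-k}|\sum_{r,s>j}\omega_{lr}P_{rs}\omega_{sj}|$ with $P=(\Omega_{j+1:p,\,j+1:p})^{-1}$, I would use only $\|P\|\le\epsilon_0^{-1}$ together with the entrywise bounds $|\omega_{lr}|\le\gamma(|l-r|-1)$ for $|l-r|\ge2$ and $|\omega_{lr}|\le\epsilon_0^{-1}$ otherwise, factor $e^{-\beta(r-l)}\le e^{-\beta(j-l)}e^{-\beta(r-j)}$, and observe that the resulting sums over $l<j-k$, $r>j$ and $s>j$ decouple into three geometric series; in the exponential case they collapse to a constant times $e^{-\beta k}$, and in the exact‑banding case to $0$ once $k$ exceeds the band. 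Hence $\sum_{l<j-k}|a_{jl}|\le C_2\gamma(k)$, uniformly in $j$ and $k$.

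The step I expect to be the main obstacle is controlling all of these quantities \emph{without a hidden factor of $p$ or $k$}: crude operator‑norm estimates would make $\|A\|_1$, $\|A\|_\infty$ and the Schur‑correction double sum as large as $\Theta(k)$ or $\Theta(\sqrt p)$, and the whole argument rests on the geometric series above telescoping with the correct rate $\gamma(k)$ rather than $k\gamma(k)$ or a slower exponential. This summability is precisely what the hypothesis $\beta>\log(\epsilon_0^{-2}+1)$ supplies — it is the threshold at which the series built from the near‑diagonal bound $\epsilon_0^{-2}$ together with the tail factor $e^{-\beta}$ converges — and it is also why the polynomially decreasing $\gamma$ is excluded. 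Verifying the positive definiteness and invertibility needed for the Schur complement, and tracking that $C_3$ and $C_2$ depend only on $\epsilon_0$ and $\gamma$ (not on $p$), is then routine.
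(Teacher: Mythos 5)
Your proof is correct in outline, and the interesting part — the inclusion $\cU^*(\epsilon_0,\gamma)\subseteq\cU(\epsilon_0,C_2\gamma)$ — takes a genuinely different route from the paper's. The paper proves this direction by a backward induction over the rows of the Cholesky factor: it writes $\omega_{j,p-t}$ via the entrywise identity $\omega_{ij}=-d_j^{-1}a_{ji}+\sum_{l>j}d_l^{-1}a_{li}a_{lj}$, isolates $d_{p-t}^{-1}\sum_{j<p-t-k}|a_{p-t,j}|$, and recursively bounds it by $\gamma(k)+\epsilon_0^{-2}\sum_{m=1}^{t}(1+\epsilon_0^{-2})^{m-1}\gamma(k+m)$. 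That recursion accumulates a factor $(1+\epsilon_0^{-2})^{m}$ at each step, and the hypothesis $\beta>\log(\epsilon_0^{-2}+1)$ is exactly what is needed to keep $\sum_m(1+\epsilon_0^{-2})^{m-1}e^{-\beta m}$ summable. Your argument instead works row by row via the Schur-complement identity $\Omega^{(j)}=\Omega_{1:j,1:j}-\Omega_{1:j,\,j+1:p}\,(\Omega_{j+1:p,\,j+1:p})^{-1}\,\Omega_{j+1:p,\,1:j}$ and the exact formula $a_{jl}=-\omega^{(j)}_{lj}/\omega^{(j)}_{jj}$, and then bounds the off-diagonal tail of $\Omega^{(j)}$ by an entrywise triple sum. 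Writing $|P_{rs}|\le\|P\|\le\epsilon_0^{-1}$ and $|\omega_{lr}|\le\gamma(|l-r|-1)$, the triple sum over $l<j-k$, $r>j$, $s>j$ indeed factors as $\epsilon_0^{-1}\bigl(\sum_{u>k}e^{-\beta u}\bigr)\bigl(\sum_{v>0}e^{-\beta v}\bigr)^2\lesssim e^{-\beta k}$, and the leading term $\sum_{l<j-k}|\omega_{lj}|\le\gamma(k)$ comes directly from the defining property of $\cU^*$. The exact-banding case also works in your framework: for $k\ge k_0$ the correction vanishes and $A$ is $k_0$-banded, while for $k<k_0$ the $l,r,s$ ranges are finite and bounded by $k_0$, giving a bound depending only on $k_0$ and $\epsilon_0$. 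The first inclusion is essentially the paper's entrywise computation repackaged as $\Omega-B_k(L)^TD^{-1}B_k(L)=B_k(L)^TD^{-1}R_k+R_k^TD^{-1}B_k(L)+R_k^TD^{-1}R_k$, which is fine.

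One thing you got wrong in the discussion, though not in the proof itself: you claim that the hypothesis $\beta>\log(\epsilon_0^{-2}+1)$ is "precisely" what makes your geometric series close. It is not. Once you bound $|P_{rs}|$ by the operator-norm constant $\epsilon_0^{-1}$, the near-diagonal constant $\epsilon_0^{-2}$ never enters a geometric series; your sums converge for every $\beta>0$. The threshold $\log(\epsilon_0^{-2}+1)$ is an artifact of the paper's inductive bookkeeping, where the factor $(1+\epsilon_0^{-2})$ really does get raised to the $m$-th power. So your Schur-complement route is not only different, it is actually stronger — it establishes the inclusion under the weaker hypothesis $\beta>0$ — and your closing paragraph misdiagnoses the role of that constant. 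Polynomial $\gamma$ is excluded for a related but different reason: $\sum_{m\ge k}\gamma(m)\asymp k\,\gamma(k)$ rather than $\gamma(k)$, so neither your bound on $\|A-B_k(A)\|_1$ nor the $l$-sum in the correction term collapses to $\gamma(k)$.
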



\subsection{Bayesian Minimax Rate} 
Posterior convergence rate is one of the most commonly used measures to show the asymptotic concentration of posterior around the true parameter (Ghosal et al., 2000\nocite{ghosal2000convergence}; Ghosal and van der Vaart, 2007\nocite{ghosal2007posterior}). 
The concept of the posterior convergence rate is used to justify priors, but the best possible posterior convergence rate is an elusive concept to define. Motivated by the aforementioned difficulty,  Lee and Lee (2017) suggested a new decision theoretic framework for prior selection.

They considered a prior $\pi(\sg)$ as a decision rule and defined the P-loss as
\bea
\mathcal{L}(\sg_0 ,\pi) &:=& \bbE^\pi \left( d(\sg, \sg_0)\mid \bfX_n \right),
\eea
where $d(\sg,\sg')$ is a pseudometric on a set of positive definite matrices, $\sg_0$ is the true covariance matrix, and $\bbE^\pi (\cdot |\bfX_n)$ is the expectation under the posterior of $\Sigma$ when the prior $\pi$ and observation $\bfX_n$ are given. The P-risk is defined as
\bean\label{EPR}
\mathcal{R}(\sg_0, \pi) &:=& \bbE_{0} \bbE^\pi \left( d(\sg, \sg_0)\mid \bfX_n \right) ,
\eean
where $\bbE_{0} := \bbE_{\sg_{0}}$ denotes the expectation with respect to $\bfX_n \overset{iid}{\sim} N_p(0, \sg_{0})$.
Let $\cC_p$ be a class of $p\times p$ covariance matrices, and $\Pi_n$ be the class of all priors on $\cC_p$.  
Then, the Bayesian minimax rate of the posterior for the class $\cC^*_p\subset  \cC_p$ and the space of prior distributions $\Pi_n^*\subset \Pi_n$ is naturally defined as a sequence $r_n$ such that
\bea
\inf_{\pi\in \Pi_n^*} \sup_{\sg_0\in \cC_p^*} \bbE_{0} \mathcal{L}(\sg_0 ,\pi(\cdot | \bfX_n) ) &\asymp& r_n.
\eea
If a prior $\pi^*$ satisfies 
\bea
\sup_{\sg_0\in \cC_p^*} \bbE_{0} \mathcal{L}(\sg_0 ,\pi(\cdot | \bfX_n) ) &\lesssim& a_n,
\eea
then $\pi^*$ is said to have a P-loss convergence rate $a_n$, and if $a_n$ has the same rate with the Bayesian minimax rate, i.e. $a_n \asymp r_n$, $\pi^*$ is said to achieve the Bayesian minimax rate.
Thus, the use of the P-loss convergence rate enables to define the minimax rate of posterior clearer and makes the prior selection a mathematical problem. 
The P-loss convergence rate is a stronger measure than the posterior convergence rate, and the frequentist minimax lower bound is also a Bayesian minimax lower bound in general. 
For more details, see Lee and Lee (2017).

\section{Main Results}\label{prec_main}
\subsection{P-loss Convergence Rate and Bayesian Minimax Lower Bound  under Spectral Norm}\label{main_pre_spec_subsec}
In this subsection, we establish the Bayesian minimax lower and upper bounds under the spectral norm.
The P-loss convergence rate with the $k$-BC prior \eqref{kBCprior} is one of the main results of this paper. It is slightly slower than the rate of the Bayesian minimax lower bound given in Theorem \ref{main_pre_spec_LB}.  The proofs of theorems are given in section \ref{proofsec}.
Theorem \ref{main_pre_spec_LB} describes the frequentist minimax lower bound for precision matrices under the spectral norm.  
\begin{theorem}\label{main_pre_spec_LB}
	Consider model \eqref{prec_model} with  $p \le \exp(c n)$ for some constant $c>0$. Assume that $\Omega_n \in {\cal{U}}(\epsilon_0, \gamma)$ for given $\epsilon_0 > 0$ and a decreasing function $\gamma$. 
\benu
\item[(i)] If there exists a constant $k_0>0$ such that $\gamma(k)=0$ for all $k\ge k_0$, we have 
	\bea
	\inf_{\what{\Omega}_n } \sup_{\Omega_{0,n} \in \cU(\epsilon_0, \gamma)} \bbE_{0n}  \| \what{\Omega}_{n} - \Omega_{0,n}\|  &\gtrsim&  \left( \frac{\log p}{n} \right)^{1/2},
	\eea
	where $\what{\Omega}$ denotes an arbitrary estimator of $\Omega_{0,n}$.
\item[(ii)] If $\gamma(k) = Ce^{-\beta k}$ for some constants $\beta>0$ and $C>0$, then we have
	\bea
	\inf_{\what{\Omega}_n } \sup_{\Omega_{0,n} \in \cU(\epsilon_0, \gamma)} \bbE_{0n} \| \what{\Omega}_{n} - \Omega_{0,n}\|  &\gtrsim&  \min \left\{ \left( \frac{\log (n \vee p)}{n} \right)^{1/2},  \left(\,\frac{p}{n}\,\right)^{1/2} \right\}.
	\eea
\item[(iii)] 	 If $\gamma(k)=C k^{-\alpha}$ for some constants $\alpha>0$ and $C>0$, then we have
	\bea
	\inf_{\what{\Omega}_n } \sup_{\Omega_{0,n} \in \cU(\epsilon_0, \gamma)} \bbE_{0n}  \| \what{\Omega}_{n} - \Omega_{0,n}\|  &\gtrsim&  \min \left\{ \left( \frac{\log p}{n} \right)^{1/2} + n^{-\alpha/(2\alpha+1)} , \left(\,\frac{p}{n}\,\right)^{1/2} \right\}. 
	\eea 
\eenu
\end{theorem}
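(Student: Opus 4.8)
The plan is to establish each of the three lower bounds by the standard information-theoretic recipe: exhibit a sufficiently rich finite subfamily of precision matrices inside $\cU(\epsilon_0,\gamma)$ whose pairwise spectral distances are bounded below while the Kullback--Leibler divergences between the corresponding Gaussian laws are controlled, and then invoke a Fano-type argument (Assouad's lemma or the Varshamov--Gilbert bound, depending on the regime). Since the constraint in $\cU(\epsilon_0,\gamma)$ is placed on the Cholesky factor $A$ rather than on $\Omega$ directly, the subfamilies will be constructed by perturbing $A$ and setting $\Omega=(I_p-A)^TD^{-1}(I_p-A)$ with $D=I_p$ (or a fixed scalar multiple), which automatically keeps the eigenvalues of $\Omega$ within $[\epsilon_0,\epsilon_0^{-1}]$ provided the perturbations of $A$ are small in spectral norm. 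One must check that $\|A-B_k(A)\|_\infty\le\gamma(k)$ holds for all $k$; this is where the three cases of $\gamma$ diverge, and it dictates how large a band and how large entries the perturbations may have.

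For part (i), the exact-banding case, the relevant rate $(\log p/n)^{1/2}$ is the "parametric" term coming from estimating the $O(k_0 p)$ nonzero entries of $A$. I would take $A$ supported on a single off-diagonal (or a few), vary one entry $a_{ij}$ over a two-point set $\{0,\tau\}$ with $\tau\asymp(\log p/n)^{1/2}$, and use the fact that changing one entry of $A$ by $\tau$ changes $\Omega$ by $\Theta(\tau)$ in spectral norm; a union of $p$ such one-parameter perturbations (placed in disjoint rows) combined with a Fano/Assouad argument and the KL bound $\mathrm{KL}\lesssim n\tau^2$ yields the claimed lower bound, exactly as in Cai--Zhang--Zhou and Bickel--Levina. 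For part (ii), the exponential case, one must additionally account for the bandwidth: the bias from truncating at band $k$ is $\lesssim\gamma(k)=Ce^{-\beta k}$, which becomes negligible (smaller than the variance term) once $k\asymp\log n$; hence the effective number of free parameters per row is $\asymp\log(n\vee p)$ up to constants, and the minimax rate is $(\log(n\vee p)/n)^{1/2}$, capped by $(p/n)^{1/2}$ when $p$ is small (the unconstrained rate, reached by embedding a dense $p\times p$ block where the banding constraint is vacuous). For part (iii), the polynomial case, the truncation bias $\gamma(k)=Ck^{-\alpha}$ decays slowly, so the optimal band is $k\asymp n^{1/(2\alpha+1)}$, at which point the variance term $(k/n)^{1/2}$ balances the bias $k^{-\alpha}$ to give $n^{-\alpha/(2\alpha+1)}$; adding the $(\log p/n)^{1/2}$ term (from a separate single-entry two-point construction as in (i)) and the $(p/n)^{1/2}$ cap gives the stated bound. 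Each of these three balances is the familiar bias--variance tradeoff of banding estimators, transplanted to the Cholesky factor.

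The main obstacle I anticipate is the second construction in parts (ii) and (iii): one needs a subfamily that is simultaneously (a) rich enough to force a rate of order $(k/n)^{1/2}$ with $k$ growing, (b) contained in $\cU(\epsilon_0,\gamma)$, so the $\ell_\infty$-norm tail condition on $A$ must hold for \emph{every} $k'$, not just the chosen one, and (c) such that the map $A\mapsto\Omega$ is bi-Lipschitz in the relevant norm on the subfamily so that spectral separation of the $\Omega$'s is controlled from below by separation of the $A$'s. Concretely, I would place the perturbations in a single row $i\approx k$ of $A$, filling the entries $a_{i,i-1},\dots,a_{i,i-k}$ with a rescaled Varshamov--Gilbert packing vector of magnitude $\asymp(1/n)^{1/2}$ each; then $\|A-B_{k'}(A)\|_\infty$ is either $0$ (if $k'\ge k$) or bounded by the sum of at most $k$ such entries, which is $\lesssim k/\sqrt n$, and this must be checked to be $\le\gamma(k')$ for all $k'<k$ — forcing the scaling to be chosen carefully, possibly distributing the perturbation across $\Theta(k)$ rows rather than one. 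Verifying this feasibility, and simultaneously keeping $\lambda_{\max}(\Omega)\le\epsilon_0^{-1}$ when $k$ entries in a row are all nonzero, is the delicate bookkeeping step; the KL computation $\mathrm{KL}(\Omega_1,\Omega_2)\lesssim n\|\Omega_1-\Omega_2\|_F^2\lesssim n k\cdot(1/n)=k$ and the final Fano inequality are then routine.
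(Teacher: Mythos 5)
Your overall strategy matches the paper's: lower bounds via Assouad's lemma (or Le Cam/Fano) applied to a finite subfamily obtained by perturbing the Cholesky factor $A$ with $D=I_p$, combined with a KL bound of the form $\mathrm{KL}\lesssim n\,\|\Omega_1-\Omega_2\|_F^2$. Your treatment of the sparse $(\log p/n)^{1/2}$ term and the $(p/n)^{1/2}$ cap (set $k=\min(\cdot,p)$) is also consistent in spirit with the paper, which obtains the sparse term from a Le Cam mixture over $p$ rank-one diagonal perturbations ($\cU_{12}$) rather than from perturbing $A$; both routes work.

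The genuine gap is in the construction underlying the $\sqrt{k/n}$ term. You propose placing a Varshamov--Gilbert packing vector in a single row of $A$ with per-entry magnitude $\asymp n^{-1/2}$. This fails the membership condition: at bandwidth $k'\asymp k/2$, the tail $\ell_1$ norm of that row is $\asymp k/\sqrt n$, while the constraint requires $\le C(k')^{-\alpha}\asymp k^{-\alpha}$; with $k\asymp n^{1/(2\alpha+1)}$ one gets $k/\sqrt n\gg k^{-\alpha}$, so the subfamily does not lie in $\cU(\epsilon_0,\gamma)$. Rescaling to $(nk)^{-1/2}$ per entry repairs the tail (it is then $\asymp(k/n)^{1/2}\asymp\gamma(k)$), but now the single-row construction gives spectral separation only $\asymp\sqrt{H}\,(nk)^{-1/2}$ in Hamming distance $H$, which is sublinear in $H$, so Assouad yields only $\asymp n^{-1/2}$ — the wrong rate. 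Your suggested fix of spreading one entry across each of $\Theta(k)$ rows fixes the tail but collapses the separation to $\asymp n^{-1/2}$ for the same reason. The paper resolves both issues simultaneously: it sets $A(\theta)=-\tau a\sum_{m=1}^{k/2}\theta_m B(m,k)$ with $a=(nk)^{-1/2}$, so each hypercube coordinate $\theta_m$ flips an entire \emph{column} of $\asymp k$ entries of size $\tau a$. The row-$\ell_1$ tail is still $\lesssim (k/2)\tau a\asymp\gamma(k)$, and — crucially — the test vector $v=(I(k/2\le i\le k))_i$ aggregates each column coherently, yielding $\|\Omega(\theta)-\Omega(\theta')\|\gtrsim\tau a\,H(\theta,\theta')$, i.e.\ separation \emph{linear} in $H$. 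Assouad then gives $\asymp\tau a\cdot k\asymp\sqrt{k/n}$. The multi-column structure plus the coherent test vector is the missing idea; without it the tail constraint and the Assouad separation cannot be satisfied at once by perturbing a single row.
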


\begin{remark}
	 Since a frequentist minimax lower bound is also a P-loss minimax lower bound, Theorem \ref{main_pre_spec_LB} implies a P-loss minimax lower bound. For the proof of this argument, see Lee and Lee (2017)\nocite{lee2017optimal}. 
\end{remark}

To the best of our knowledge, there is no frequentist minimax lower bound result on this setting. The estimation of precision matrix with polynomially banded Cholesky factor under the spectral norm was studied by Bickel and Levina (2008b), but they did not consider the minimax lower bound. Verzelen (2010) obtained the minimax lower bound, but he considered the sparse Cholesky factor under the Frobenius norm.

Cai and Yuan (2016)  considered the estimation of covariance operator for random variables on a lattice graph under the spectral norm. They used both polynomially and exponentially banded assumption for the covariance operator.
Although bandable covariance (or precision) matrix classes and bandable Cholesky factor classes are different, 
if one considers one-dimensional lattice, interestingly the minimax lower bounds in Cai and Yuan (2016) coincide with the minimax lower bounds in Theorem \ref{main_pre_spec_LB} (ii) and (iii).

\begin{theorem}\label{main_pre_spec}
	Consider the model \eqref{prec_model} and the $k$-BC prior \eqref{kBCprior} with $M\ge 9\epsilon_0^{-1}$ and $\nu_0 = o(n)$. If $k^{3/2}(k + \log (n\vee p)) = O(n)$, $\sum_{m=1}^\infty \gamma(m) < \infty$ and $1\le k \le p$, 
	\bea
	\sup_{\Omega_{0,n} \in \cU(\epsilon_0, \gamma)} \bbE_{0n} \bbE^\pi \big( \|\Omega_{n}-\Omega_{0,n}\| \mid \bfX_n \big)  &\lesssim& k^{3/4} \left[ \left(\frac{k+\log (n\vee p)}{n} \right)^{1/2} + \gamma(k) \right].
	\eea 
\end{theorem}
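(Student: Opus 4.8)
The plan is to bound $\|\Omega_n - \Omega_{0,n}\|$ by controlling the three pieces of the modified Cholesky decomposition separately: the Cholesky factor $A_n$ versus $A_{0,n}$, the banded-truncation error $A_{0,n} - B_k(A_{0,n})$, and the diagonal factor $D_n$ versus $D_{0,n}$. Writing $\Omega = (I_p - A)^T D^{-1} (I_p - A)$ and similarly for $\Omega_0$, I would use the algebraic identity $\Omega - \Omega_0 = (I_p-A)^T D^{-1}(A_0 - A) + (A_0-A)^T D^{-1}(I_p-A_0) + (I_p-A_0)^T(D^{-1}-D_0^{-1})(I_p-A_0)$, together with submultiplicativity of the spectral norm. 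Since all relevant matrices have bounded spectral norm on the event that the posterior concentrates (the eigenvalue constraint $\epsilon_0 \le \lambda_{\min} \le \lambda_{\max}\le \epsilon_0^{-1}$ for $\Omega_0$, and the truncation $d_j \le M$ in the prior, give $\|I_p - A_0\|$, $\|D_0^{-1}\|$, and $\|D^{-1}\|$ all $O(1)$), this reduces the problem to bounding $\bbE_{0n}\bbE^\pi(\|A_n - A_{0,n}\| \mid \bfX_n)$ and $\bbE_{0n}\bbE^\pi(\|D_n^{-1} - D_{0,n}^{-1}\| \mid \bfX_n)$, provided one also controls the small-probability event where the posterior escapes the good region.

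For the Cholesky-factor term I would split $A_n - A_{0,n} = (A_n - B_k(A_{0,n})) + (B_k(A_{0,n}) - A_{0,n})$. The second summand is purely deterministic and its spectral norm is at most $\sqrt{\|\cdot\|_1 \|\cdot\|_\infty}$; by the $\ell_\infty$ bandable assumption $\|A_0 - B_k(A_0)\|_\infty \le \gamma(k)$, and by a symmetric argument for the $\ell_1$ norm using $\sum_{m}\gamma(m) < \infty$ (a row-versus-column duality as in Bickel and Levina), this is $O(\gamma(k))$ up to constants. The first summand is a $k$-banded matrix, so its spectral norm is controlled by its $\ell_\infty$ norm times a factor like $\sqrt{k}$ or by a direct banded-matrix bound; each row of $A_n - B_k(A_{0,n})$ involves the posterior of $a_j$ around $\what a_j^{(k)}$, which by \eqref{post} is Gaussian with covariance $\tfrac{d_j}{n}\what\V^{-1}(Z_j^{(k)})$, so I would bound $\bbE^\pi\|a_j - a_{0,j}^{(k)}\|$ by $\|\what a_j^{(k)} - a_{0,j}^{(k)}\|$ plus a posterior-spread term of order $\sqrt{k \what d_j / n}\,\|\what\V^{-1}(Z_j^{(k)})\|^{1/2}$, then take expectations over $\bfX_n$ using standard concentration for sample covariances of sub-Gaussian vectors (the least-squares error $\what a_j^{(k)} - a_{0,j}^{(k)}$ has the familiar $\sqrt{(k+\log(n\vee p))/n}$ rate, uniformly over $j$ by a union bound over $p$ coordinates). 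Combining the per-row bounds through the $\ell_\infty$-to-spectral passage produces the $k^{1/2}$-type inflation; the extra $k^{1/4}$ in the statement should come from combining this $\ell_\infty$ control with an $\ell_1$ control (again by row/column duality) and taking the geometric mean $\|M\| \le \sqrt{\|M\|_1\|M\|_\infty}$, each of which contributes $k^{1/2}$ from the banding but whose product under the square root yields the $k^{3/4}$ overall. The $D_n$ term is analogous but easier: $\bbE^\pi\|D_n^{-1} - D_{0,n}^{-1}\| = \bbE^\pi \max_j |d_j^{-1} - d_{0,j}^{-1}|$, and since $d_j$ has a truncated inverse-gamma posterior concentrating around $\what d_{jk}$, and $\what d_{jk}$ concentrates around $d_{0,j}$ at rate $\sqrt{(k+\log(n\vee p))/n}$ (plus a deterministic bias of order $\gamma(k)^2$ or $\gamma(k)$ coming from using only $k$ regressors), this contributes a term of the same order $\sqrt{(k+\log(n\vee p))/n} + \gamma(k)$ without the $k^{3/4}$ inflation since it is a diagonal (hence $\|\cdot\| = \|\cdot\|_{\max}$) object.

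The main obstacle I anticipate is the uniform-in-$j$ control of the sample quantities $\what\V(Z_j^{(k)})$, $\what\V^{-1}(Z_j^{(k)})$, $\what a_j^{(k)}$, and $\what d_{jk}$: one needs that all $p$ of the $k\times k$ sample Gram matrices are simultaneously invertible and well-conditioned with high probability, which requires the condition $k^{3/2}(k+\log(n\vee p)) = O(n)$ to make a union bound over $p$ (inside $\exp(cn)$) of sub-exponential deviations work, and it is precisely here that the exponent $3/2$ on $k$ must be tracked carefully — it arises from needing the spectral deviation of a $k$-dimensional sample covariance (scale $\sqrt{k/n}$) to be small enough after the $\sqrt{k}$-type loss incurred when passing from per-row errors to a spectral-norm bound on the banded difference. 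Secondarily, I must handle the bad event where the truncated inverse-gamma draws or the eigenvalues of the posterior $\Omega_n$ leave the region where the above norm inequalities have bounded constants; this is dealt with by showing the posterior probability of that event is exponentially small (using the truncation at $M$ and the concentration of $\what d_{jk}$ away from $0$ via $M \ge 9\epsilon_0^{-1}$) and that $\|\Omega_n - \Omega_{0,n}\|$ has at most polynomial-in-$p$ size there, so its contribution to the P-risk is negligible. Assembling the Cholesky-factor bound, the diagonal bound, and this negligible-event bound via the triangle inequality on the identity above gives the stated rate $k^{3/4}\big[\big((k+\log(n\vee p))/n\big)^{1/2} + \gamma(k)\big]$.
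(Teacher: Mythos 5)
Your overall strategy — decompose $\Omega_n-\Omega_{0,n}$ through the Cholesky factors, control the posterior spread of $A_n$ and $D_n^{-1}$ around their data-dependent centers, control the frequentist error of those centers, handle the banded-approximation bias deterministically, and sweep the bad event under the rug with a concentration argument — matches the paper's divide-and-conquer proof, which passes through the plug-in frequentist estimator $\what\Omega_{nk}$ and its 7-term Bickel-Levina expansion. Your 3-term telescoping identity for $\Omega-\Omega_0$ is a valid and somewhat cleaner alternative to the paper's expansion and would serve the same purpose. However, there are two genuine gaps.

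First, your accounting for the $k^{3/4}$ factor is internally inconsistent. You write that both the $\|\cdot\|_1$ and $\|\cdot\|_\infty$ bounds pick up a $k^{1/2}$ from the banding, and that $\|M\|\le\sqrt{\|M\|_1\|M\|_\infty}$ then gives $k^{3/4}$; but the geometric mean of two $k^{1/2}$ factors is $k^{1/2}$, not $k^{3/4}$. The actual asymmetry, which you would need to track, is that the row-wise errors $\what a_j^{(k)}-a_{0,j}^{(k)}$ are naturally controlled in $\ell_2$ with a uniform rate $\big((k+\log(n\vee p))/n\big)^{1/2}$, and then $\|\what A_{nk}-A_{0,nk}\|_\infty=\max_j\|\what a_j^{(k)}-a_{0,j}^{(k)}\|_1\le\sqrt{k}\max_j\|\cdot\|_2$ picks up $k^{1/2}$ (Cauchy--Schwarz on a length-$k$ vector), whereas $\|\what A_{nk}-A_{0,nk}\|_1=\max_i\sum_j|\cdot|\le k\max_j\|\cdot\|_{\max}\le k\max_j\|\cdot\|_2$ picks up $k$ (each column has at most $k$ nonzero entries, each bounded by the $\ell_2$ norm of some row vector). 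The geometric mean is then $\sqrt{k^{1/2}\cdot k}=k^{3/4}$. As written, your argument would only deliver $k^{1/2}$.

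Second, your split $A_n-A_{0,n}=(A_n-B_k(A_{0,n}))+(B_k(A_{0,n})-A_{0,n})$ uses the entrywise truncation $B_k(A_{0,n})$ as the intermediate point, but the posterior for $a_j$ is centered at $\what a_j^{(k)}$, which concentrates around the population $k$-lag regression coefficient $a_{0,j}^{(k)}=\V^{-1}(Z_j^{(k)})\C(Z_j^{(k)},X_j)$, not around the truncated vector $B_{k-1,j}(a_{0,j})$. These two $k$-banded objects differ, and the gap between them must itself be controlled; the paper does this in Lemma \ref{diffAbound}, showing $\|A_{0,nk}-A_{0,n}\|_\infty\lesssim\sqrt{k}\,\gamma(k)$ and $\|A_{0,nk}-A_{0,n}\|_1\lesssim k\,\gamma(k)$, which is another source of the $k^{3/4}\gamma(k)$ term. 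In your sketch you seamlessly switch from $B_k(A_{0,n})$ in the decomposition to $a_{0,j}^{(k)}$ in the posterior-spread bound, so the mismatch between them is never accounted for. This is not fatal to the strategy but needs to be repaired, and once repaired the bias term comes out as $k^{3/4}\gamma(k)$, not the $O(\gamma(k))$ you claim for the deterministic piece. Similarly, the deterministic bias on the diagonal $D_{0,nk}-D_{0,n}$ is $\lesssim\sqrt{k}\,\gamma(k)$ rather than $\gamma(k)$, though this happens to be dominated by the stated rate.
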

Here, we use divide and conquer strategy to deal with the P-loss convergence rate. 
We decompose it into some small terms, which are easier to handle,
\bea
\bbE_{0n} \bbE^\pi \big( \|\Omega_{n}-\what{\Omega}_{nk}\| \mid \bfX_n \big) + \bbE_{0n} \|\what{\Omega}_{nk}-\Omega_{0,n}\| ,
\eea
where $\what{\Omega}_{nk}$ is a frequentist estimator of $\Omega_{0,n}$ with $k$-banded assumption. 
For the first term, we use concentration inequalities for posteriors of parameters around certain frequentist estimators. 
For the second term, some techniques for the frequentist convergence rate can be adopted. 
This strategy can be applied for general problems, for example, Castillo (2014)\nocite{castillo2014bayesian} also used the similar technique to obtain the P-loss convergence rate in density estimation.

When $\gamma(k)$ satisfies the exact banding with $k_0$, 
the prior \eqref{kBCprior} with $k\geq k_0$ not depending $n$ gives the P-loss convergence rate $(\log (n\vee p)/n)^{1/2}$, which is same as the Bayesian minimax rate when $p \geq n$. 
When $p < n$, if $p = n^\xi$ for some constant $0< \xi< 1$, the prior still achieves the Bayesian minimax rate. 

For the exponentially decreasing $\gamma(k)$, the optimal choice of $k$ is $(2\beta)^{-1}\log n$. It gives the P-loss convergence rate
\bean\label{up_sp2}
(\log n)^{3/4} \left(\frac{\log (n\vee p)}{n}\right)^{1/2} .
\eean
If $p \ge \log n$, the rate of \eqref{up_sp2} is same with the rate of minimax lower bound up to $(\log n)^{3/4}$. 

For the polynomially decreasing $\gamma(k)$, we assume that $p\ge n^{1/(2\alpha+1)}$.
The optimal choice of the bandwidth $k=\min \{ n^{1/(2\alpha+1)}, (n/\log p)^{1/(2\alpha)} \}$ gives the P-loss convergence rate
\bean\label{up_sp1}
 n^{-(4\alpha-3)/(8\alpha+4)} + \left(\frac{\log p}{n} \right)^{(4\alpha-3)/(8\alpha)}.
\eean
In other words, the P-loss convergence rate is $n^{-(4\alpha-3)/(8\alpha+4)}$ and $(\log p /n )^{(4\alpha-3)/(8\alpha)}$ when $p \le \exp(n^{1/(2\alpha+1)})$ and $p\ge \exp(n^{1/(2\alpha+1)})$, respectively.
Thus, if $p \ge n^{1/(2\alpha+1)}$, the P-loss convergence rate \eqref{up_sp1} is equal to the rate of the minimax lower bound up to the $\min(n^{3/(8\alpha+4)}, (n/\log p)^{3/(8\alpha)} )$ term.

\subsection{P-loss Convergence Rate and Bayesian Minimax Lower Bound under Matrix $\ell_{\infty}$ Norm}\label{main_pre_linf_subsec}
In this subsection, we establish the upper bound and lower bound for Bayesian minimax rate under the matrix $\ell_\infty$ norm.
The P-loss convergence rate with the $k$-BC prior \eqref{kBCprior} is one of the main results of this paper. It is slightly slower than the rate of the minimax lower bound given in Theorem \ref{main_pre_linf_LB}. However, we emphasize that our convergence rate is the fastest rate for bandable precision matrices among the existing Bayesian methods when we consider the exponentially decreasing or exact banding $\gamma(k)$. 
The proofs of theorems are given in section \ref{proofsec}.
Theorem \ref{main_pre_linf_LB} describes the minimax lower bound for precision matrices under the matrix $\ell_\infty$ norm.
\begin{theorem}\label{main_pre_linf_LB}
	Consider the model \eqref{prec_model} and let $p \le \exp(c n)$ for some constant $c>0$. 
	\benu
	\item[(i)] If there exists a constant $k_0$ such that $\gamma(k) =0$ for all $k\ge k_0$, we have 
	\bea
	\inf_{\what{\Omega}_n} \sup_{\Omega_{0,n} \in \cU(\epsilon_0, \gamma)} \bbE_{0n} \| \what{\Omega}_{n} - \Omega_{0,n}\|_\infty  &\gtrsim&   \left( \frac{ \log p  }{n} \right)^{1/2} .
	\eea
	\item[(ii)] If $\gamma(k) = Ce^{-\beta k}$ for some constants $\beta>0$ and $C>0$, then we have
	\bea
	\inf_{\what{\Omega}_n} \sup_{\Omega_{0,n} \in \cU(\epsilon_0, \gamma)} \bbE_{0n} \| \what{\Omega}_{n} - \Omega_{0,n}\|_\infty &\gtrsim& \min \left\{ \left( \frac{ \log p \cdot \log n}{n} \right)^{1/2}  , \frac{p}{\sqrt{n}}  \right\}.
	\eea
	\item[(iii)] If $\gamma(k)=Ck^{-\alpha}$ for some constants $\alpha>0$ and $C>0$, then we have
	\bea
	\inf_{\what{\Omega}_n} \sup_{\Omega_{0,n} \in \cU(\epsilon_0, \gamma)} \bbE_{0n} \| \what{\Omega}_{n} - \Omega_{0,n}\|_\infty &\gtrsim& \min \left\{ \left( \frac{ \log p}{n} \right)^{\alpha/(2\alpha+1)} + n^{-\alpha/(2\alpha+2)} , \frac{p}{\sqrt{n}}  \right\}.
	\eea
	\eenu 
\end{theorem}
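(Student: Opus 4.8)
The plan is to establish the minimax lower bound under the matrix $\ell_\infty$ norm for each of the three regimes of $\gamma$ via Fano-type or Assouad-type arguments, working directly with the Cholesky parametrization. The key structural observation is that the map $A \mapsto \Omega = (I_p - A)^T D^{-1}(I_p - A)$ is, for $A$ supported near the diagonal and $D$ close to $I_p$, bi-Lipschitz in the relevant norms, so perturbations of the Cholesky factor translate into comparable perturbations of $\Omega$. For part (i), the $(\log p / n)^{1/2}$ bound is the classical parametric-in-a-single-entry rate: I would build a finite packing set by perturbing a single off-diagonal entry $a_{j,j-1}$ (hence keeping $A$ $1$-banded and staying in $\cU(\epsilon_0,\gamma)$ for $k_0 \geq 1$) at one of $\asymp p$ positions, each by $\pm \tau$ with $\tau \asymp (\log p/n)^{1/2}$, apply Varshamov--Gilbert to get exponentially many hypotheses with pairwise $\ell_\infty$-separation $\gtrsim \tau$ in $\Omega$, and bound the KL divergence between the corresponding Gaussians by $\lesssim n\tau^2 \lesssim \log p$, so Fano gives the bound. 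The $p/\sqrt n$ truncation appearing in (ii) and (iii) comes from the trivial bound: the parameter space always contains $\Omega$'s with $\ell_\infty$ norm of order a constant, and no estimator can do better than a fixed constant, while the "harder" terms never exceed this when $p$ is large — so this branch requires essentially no work beyond noting $\|\Omega - \Omega_0\|_\infty$ is $O(1)$-separated cannot be avoided and a direct two-point argument against that scale.

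For the polynomial case (iii), the lower bound has two additive pieces, reflecting two distinct obstructions, and I would prove $\gtrsim \max$ of the two and then combine since $a+b \asymp \max(a,b)$. The first piece $(\log p/n)^{\alpha/(2\alpha+1)}$ arises from the tension between estimation noise and the banding bias allowed by $\gamma$: I would take a "hypercube" construction on a band of width $k \asymp (n/\log p)^{1/(2\alpha)}$, perturbing entries $a_{ij}$ with $|i-j| \leq k$ so that $\|A - B_k(A)\|_\infty$ stays below $\gamma(k) \asymp k^{-\alpha}$ while each row carries $\asymp k$ perturbations of size $\asymp \gamma(k)/k \asymp k^{-\alpha - 1}$; the $\ell_\infty$-separation in $\Omega$ is then $\gtrsim \gamma(k) \asymp (\log p/n)^{\alpha/(2\alpha+1)}$ after optimizing $k$, and the KL bound $\lesssim n k (k^{-\alpha-1})^2 \lesssim \log p$ (using $\log p$ worth of independent rows) closes it via Fano. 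The second piece $n^{-\alpha/(2\alpha+2)}$ is the $p$-free rate that survives even when $p$ is tiny; I would obtain it from a construction concentrated in $O(1)$ rows using a band of width $k \asymp n^{1/(2\alpha+2)}$ with perturbation magnitude tuned so the per-hypothesis KL is $O(1)$ and the number of hypotheses is a fixed constant (a two- or three-point Le Cam argument suffices), giving separation $\asymp k^{-\alpha} \cdot (\text{something}) \asymp n^{-\alpha/(2\alpha+2)}$.

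The exponential case (ii) is the genuinely new and delicate part: the target $(\log p \cdot \log n / n)^{1/2}$ carries an extra $\sqrt{\log n}$ over the naive parametric rate, so a one-parameter perturbation at $\asymp p$ positions is not enough. The standard device (following Cai--Zhou-style bandable lower bounds and the lattice analogue of Cai--Yuan referenced in the text) is a nested/multiscale construction: within a single row one places perturbations at $\asymp \log n$ lagged positions simultaneously, each of size $\asymp (\text{row budget})/\sqrt{\log n}$, so that the combined bias respects $\gamma(k) = Ce^{-\beta k}$ (which decays fast enough that only $O(\log n)$ lags contribute meaningfully before falling below the noise floor $\asymp (1/n)^{1/2}$), the KL per row is then $\asymp \log n$ rather than $O(1)$, and one still has $\asymp p$ such (near-)independent rows, so Fano with $\log(\text{cardinality}) \asymp p \log n$ against total KL $\asymp$ (number of perturbed rows) $\times \log n$ forces the $\sqrt{\log n}$ amplification while the $p$ from the column index supplies the $\sqrt{\log p}$. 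I expect the main obstacle to be bookkeeping the exponential-tail constraint precisely — verifying that the multiscale perturbation, summed over lags, stays inside $\|A - B_k(A)\|_\infty \leq \gamma(k)$ for \emph{all} $k$ simultaneously (not just the critical one), while keeping the induced $\ell_\infty$ separation in $\Omega$ of the claimed order and the KL divergence controlled. The Cholesky-to-precision conversion (controlling $\|\Omega - \Omega_0\|_\infty$ from $\|A - A_0\|_\infty$ and $\|D - D_0\|_\infty$, using the uniform eigenvalue bounds $\epsilon_0 \leq \lambda_{\min} \leq \lambda_{\max} \leq \epsilon_0^{-1}$) and the Gaussian KL computation are routine by comparison and I would relegate them to lemmas.
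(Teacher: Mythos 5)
Your high-level strategy --- reducing to Fano/Assouad in the Cholesky parametrization, using the eigenvalue bounds to translate perturbations of $A$ into perturbations of $\Omega$, splitting the polynomial case into two regimes, and obtaining the $p/\sqrt{n}$ branch from a constant-scale two-point argument --- matches the paper's broad outline. But the details diverge in ways that matter, and part (ii) in particular is substantially over-engineered relative to what the paper actually does.

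For part (ii), the paper does \emph{not} use a multiscale construction or a family of near-independent rows with per-row KL $\asymp \log n$. It uses exactly the same Assouad construction $\cG_{11}$ as in the low-$p$ regime of (iii): a single column of the Cholesky factor is perturbed, $A(\theta) = -\tau a \sum_{s=2}^{k}\theta_{s-1}G_s$ with $G_s$ the elementary matrix supported on $(s,1)$, each coordinate of size $\tau a = \tau n^{-1/2}$, and the \emph{only} change versus (iii) is the choice $k = \min\bigl( [\log n \cdot \log p]^{1/2}, p \bigr)$. Assouad then produces the factor $\tau a \cdot \tfrac{k-1}{2} \asymp (\log n \cdot \log p/n)^{1/2}$ directly; the $\sqrt{\log n}$ and $\sqrt{\log p}$ both come from the hypercube dimension $k$, not (as you hypothesize) with $\sqrt{\log n}$ from a multiscale stack and $\sqrt{\log p}$ from $\asymp p$ independent columns. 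Your proposed Fano bookkeeping with cardinality $\asymp 2^{p\log n}$ against total KL $\asymp p\log n$ is not the right accounting and does not in fact produce the target rate --- it would simply cancel. The genuine constraint check you flag (that $\|A - B_{k_1}(A)\|_\infty \le \gamma(k_1)$ holds for all $k_1$ simultaneously) is real but, with the single-column construction, reduces to $\tau n^{-1/2}\le C e^{-\beta k_1}$ for $k_1\le k-2$, which is a one-line verification.

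For part (iii) your arithmetic is off: you write $k\asymp(n/\log p)^{1/(2\alpha)}$, but matching $\gamma(k)\asymp k^{-\alpha}$ to the target $(\log p/n)^{\alpha/(2\alpha+1)}$ forces $k\asymp(n/\log p)^{1/(2\alpha+1)}$ (the value the paper uses for $\cG_{12}$). With that correction, your per-entry scale $\gamma(k)/k\asymp k^{-\alpha-1}$ and your KL estimate $n k (k^{-\alpha-1})^2\asymp\log p$ both come out right, so the idea is sound modulo this slip. Also note the paper's $\cG_{12}$ is a Fano/Le Cam argument with $m_*\asymp p/k$ hypotheses, each shifting the perturbed column by one position --- not a $2^{pk}$ hypercube as your ``hypercube construction'' phrasing suggests; you would need to be careful that your larger cardinality is compatible with the required pairwise separation in $\ell_\infty$. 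For part (i), your Fano-with-$p$-positions argument is a legitimate alternative to the paper's Assouad on a short column $(k=k_0)$, and is arguably cleaner.
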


\begin{theorem}\label{main_pre_linf}
	Consider the model \eqref{prec_model} and the $k$-BC prior \eqref{kBCprior} with $M\ge 9\epsilon_0^{-1}$ and $\nu_0 = o(n)$. If $k(k + \log (n\vee p)) = O(n)$, $\sum_{m=1}^\infty \gamma(m) < \infty$ and $1\le k \le p$, 
	\bea
	\sup_{\Omega_{0,n} \in \cU(\epsilon_0, \gamma)} \bbE_{0n} \bbE^\pi \big( \|\Omega_{n}-\Omega_{0,n}\|_\infty \mid \bfX_n \big)  &\lesssim& k \left[ \left(\frac{k+\log (n\vee p)}{n} \right)^{1/2} + \gamma(k)\right].
	\eea
\end{theorem}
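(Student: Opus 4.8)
\medskip
\noindent\textbf{Proof plan.} The plan is to run the divide-and-conquer argument outlined after Theorem~\ref{main_pre_spec}, tracking the matrix $\ell_\infty$ norm at every step. Let $\what{\Omega}_{nk}:=(I_p-\what{A}_{nk})^T\what{D}_{nk}^{-1}(I_p-\what{A}_{nk})$ be the plug-in $k$-banded frequentist estimator, where $\what{A}_{nk}$ is the $k$-banded lower triangular matrix with $j$th row $\what{a}_j^{(k)}$ and $\what{D}_{nk}=\mathrm{diag}(\what{d}_{jk})$, and use
\[
\|\Omega_n-\Omega_{0,n}\|_\infty\ \le\ \|\Omega_n-\what{\Omega}_{nk}\|_\infty+\|\what{\Omega}_{nk}-\Omega_{0,n}\|_\infty .
\]
I would work on a good event $\calE_n$ on which, uniformly over $j\le p$, the $k\times k$ sample matrices $\what{\V}(Z_j^{(k)})$ are well conditioned and close to their population versions, $\what{d}_{jk}$ is bounded above and below, and $\what{a}_j^{(k)}$ is close to the population quantity $a_{0,j}^{(k)}$; sub-Gaussian/Wishart concentration together with a union bound over $j$ shows that $\bbP_{0n}(\calE_n^c)$ is exponentially small precisely when $k(k+\log(n\vee p))=O(n)$. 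The contribution of $\calE_n^c$ to $\bbE_{0n}\bbE^\pi(\|\Omega_n-\Omega_{0,n}\|_\infty\mid\bfX_n)$ is then controlled by a crude Cauchy--Schwarz bound, using that the relevant posterior moments of $\|\Omega_n\|_\infty$ (finite because, under \eqref{post}, each $a_j$ is conditionally Gaussian and each $d_j$ is a truncated inverse-gamma with a positive rate) grow at most polynomially in $n$ and $p$.

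For the posterior term I would expand $L^TD^{-1}L-\what{L}^T\what{D}^{-1}\what{L}$, with $L=I_p-A_n$, $D=D_n$ drawn from the posterior and $\what{L}=I_p-\what{A}_{nk}$, into a sum of products of $\what{L}$, $E:=\what{A}_{nk}-A_n$ and $F:=D_n^{-1}-\what{D}_{nk}^{-1}$, and bound each product in $\ell_\infty$ by submultiplicativity. By \eqref{post}, conditionally on $(\bfX_n,d_j)$ the $j$th row satisfies $a_j-\what{a}_j^{(k)}\sim N(0,\frac{d_j}{n}\what{\V}^{-1}(Z_j^{(k)}))$ with $d_j\le M$, so on $\calE_n$ its $\ell_2$ size is $\lesssim (k/n)^{1/2}$; Gaussian concentration plus a union bound over the $\le p$ rows gives $\bbE^\pi(\max_j\|a_j-\what{a}_j^{(k)}\|_2\mid\bfX_n)\lesssim\{(k+\log(n\vee p))/n\}^{1/2}$, and since each such row, as well as each column of $E$, carries at most $k$ nonzero entries, $\bbE^\pi(\|E\|_\infty\mid\bfX_n)$ and $\bbE^\pi(\|E\|_1\mid\bfX_n)$ are $\lesssim k^{1/2}\{(k+\log(n\vee p))/n\}^{1/2}$. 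Similarly each $d_j$ is a truncated inverse-gamma with $\asymp n$ degrees of freedom, so $\bbE^\pi(\|F\|_\infty\mid\bfX_n)\lesssim\{\log(n\vee p)/n\}^{1/2}$. Combining these with the a priori bounds $\|\what{A}_{nk}\|=O(1)$ on $\calE_n$ (whence $\what{L}$ is $k$-banded with bounded spectral norm, so $\|\what{L}\|_\infty,\|\what{L}\|_1=\|\what{L}^T\|_\infty\lesssim k^{1/2}$) and $\|\what{D}_{nk}^{-1}\|_\infty=O(1)$, the dominant products ($\what{L}^TF\what{L}$, $\what{L}^T\what{D}^{-1}E$, $E^T\what{D}^{-1}\what{L}$) are each $\lesssim k^{1/2}\cdot k^{1/2}\{(k+\log(n\vee p))/n\}^{1/2}=k\{(k+\log(n\vee p))/n\}^{1/2}$, and the remaining higher-order products are absorbed using $k+\log(n\vee p)=O(n)$.

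For the frequentist term I would insert the population $k$-banded surrogate $\Omega_{0,n}^{(k)}:=(I_p-A_0^{(k)})^T(D_0^{(k)})^{-1}(I_p-A_0^{(k)})$, with $A_0^{(k)}$ having rows $a_{0,j}^{(k)}$ and $D_0^{(k)}=\mathrm{diag}(d_{0,j}^{(k)})$, and split $\|\what{\Omega}_{nk}-\Omega_{0,n}\|_\infty\le\|\what{\Omega}_{nk}-\Omega_{0,n}^{(k)}\|_\infty+\|\Omega_{0,n}^{(k)}-\Omega_{0,n}\|_\infty$. The first (variance) term is handled exactly as in the previous paragraph, now with the frequentist concentration of $\what{a}_j^{(k)},\what{d}_{jk}$ around $a_{0,j}^{(k)},d_{0,j}^{(k)}$, and is $\lesssim k\{(k+\log(n\vee p))/n\}^{1/2}$. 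For the second (bias) term I would use the omitted-variable identity expressing $A_0^{(k)}-B_k(A_0)$ through $\V(Z_j^{(k)})^{-1}$, the cross-covariances between $Z_j^{(k)}$ and the omitted regressors, and the vectors of omitted coefficients (each of $\ell_1$ norm $\le\gamma(k)$), together with $\|A_0-B_k(A_0)\|_\infty\le\gamma(k)$; this gives $\|A_0-A_0^{(k)}\|_\infty\lesssim k^{1/2}\gamma(k)$ and, since a single column of the in-band correction may collect $\lesssim k$ nonzero entries, $\|A_0-A_0^{(k)}\|_1\lesssim k\gamma(k)$, while $\sum_m\gamma(m)<\infty$ keeps $\|I_p-A_0\|_\infty$, $\|I_p-A_0\|_1$ and $\|D_0^{-1}\|$ all $O(1)$; assembling once more yields $\|\Omega_{0,n}^{(k)}-\Omega_{0,n}\|_\infty\lesssim k\gamma(k)$. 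Adding the posterior, variance and bias contributions and the negligible $\calE_n^c$ remainder gives the asserted bound $\lesssim k[\{(k+\log(n\vee p))/n\}^{1/2}+\gamma(k)]$.

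\textbf{The main obstacle} I anticipate is the uniform-in-$j\le p$ control of the random $k\times k$ blocks $\what{\V}(Z_j^{(k)})$ and of the conditional-Gaussian posterior fluctuations with exactly the advertised dependence on $k$ and $\log(n\vee p)$ --- that is, constructing $\calE_n$ and proving the concentration inequalities above --- and, entangled with it, making sure that passing the pieces through the non-commuting product $L^TD^{-1}L$ loses only the stated factor $k$: roughly, $k^{1/2}$ from the $k$-sparse rows/columns of the fluctuation $E$ times $k^{1/2}$ from the column sums $\|\what{L}\|_1$ of the banded Cholesky factor, and, for the bias, a full factor $k$ from a column collecting $\lesssim k$ in-band corrections. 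A secondary technical nuisance is the $\calE_n^c$ contribution, which requires an a priori polynomial-in-$(n,p)$ moment bound for the posterior of $\|\Omega_n\|_\infty$ under the flat (improper) prior on the $a_{jl}$.
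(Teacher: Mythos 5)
Your plan is essentially the same as the paper's. The paper proves Theorem~\ref{main_pre_linf} by re-running the Theorem~\ref{main_pre_spec} argument with the matrix $\ell_\infty$ norm, using the split over $N_n$ and $N_n^c$, Lemma~\ref{freq_as} for the frequentist term, and Lemmas~\ref{postbound}, \ref{Ank_post}, \ref{Dinv_subG} for the posterior term; the only new thing the paper flags is that $\|I_p-\what{A}_{nk}\|_1\lesssim\sqrt{k}$ on $N_n$ under $k(k+\log(n\vee p))=O(n)$. Your decomposition, good event, concentration lemmas, bias/variance split, and $\calE_n^c$ remainder all track the paper's proof closely.

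There is, however, one concrete gap in the way you control $\what{L}=I_p-\what{A}_{nk}$. You assert ``$\|\what{A}_{nk}\|=O(1)$ on $\calE_n$'' and then deduce $\|\what{L}\|_1,\|\what{L}\|_\infty\lesssim k^{1/2}$ from bandedness plus bounded spectral norm. But under the hypothesis of this theorem, $k(k+\log(n\vee p))=O(n)$, the spectral norm of $\what{A}_{nk}$ is \emph{not} $O(1)$ on the good event: the available bounds on $N_n$ give $\|\what{A}_{nk}-A_{0,nk}\|_\infty\lesssim\sqrt{k}\,\{(k+\log(n\vee p))/n\}^{1/2}$ and $\|\what{A}_{nk}-A_{0,nk}\|_1\lesssim k\,\{(k+\log(n\vee p))/n\}^{1/2}$, so the interpolation $\|\cdot\|\le(\|\cdot\|_1\|\cdot\|_\infty)^{1/2}$ only yields $\|\what{A}_{nk}-A_{0,nk}\|\lesssim k^{3/4}\{(k+\log(n\vee p))/n\}^{1/2}=O(k^{1/4})$, hence $\|\what{A}_{nk}\|=O(k^{1/4})$, not $O(1)$. (The $O(1)$ bound does hold under the \emph{spectral-norm} theorem's stronger condition $k^{3/2}(k+\log(n\vee p))=O(n)$.) If you follow your own chain of reasoning literally, ``$k$-banded with spectral norm $O(k^{1/4})$'' gives $\|\what{L}\|_1,\|\what{L}\|_\infty\lesssim\sqrt{k}\cdot k^{1/4}=k^{3/4}$, and the dominant product $\|\what{L}^T\|_\infty\|\what{D}_{nk}^{-1}\|_\infty\|E\|_\infty$ then comes out $\lesssim k^{5/4}\{(k+\log(n\vee p))/n\}^{1/2}$, which misses the stated rate by $k^{1/4}$. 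The fix is to bound $\|\what{A}_{nk}\|_1$ and $\|\what{A}_{nk}\|_\infty$ directly via the triangle inequality with $A_{0,nk}$ (as the paper does in the Theorem~\ref{main_pre_spec} proof and in Lemma~\ref{freq_as}): on $N_n$ this yields $\|\what{L}\|_\infty=O(1)$ and $\|\what{L}\|_1=O(\sqrt{k})$ under $k(k+\log(n\vee p))=O(n)$, after which your assembly of the products produces the claimed rate. The rest of your sketch (posterior concentration from Lemmas~\ref{postbound}--\ref{Dinv_subG}, the bias bound from Lemma~\ref{diffAbound}, and the Cauchy--Schwarz handling of $N_n^c$ via Lemma~\ref{Al1norm}) is correct and matches the paper.
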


\begin{remark}
	The P-loss convergence rate in Theorem \ref{main_pre_linf} is sharper than the posterior convergence rate of Banerjee and Ghosal (2014). 
	If we consider an exponentially decreasing or exact banding $\gamma(k)$, then the parameter spaces in two papers are equivalent by Proposition \ref{equi_para}. In that cases, the convergence rate obtained in Theorem \ref{main_pre_linf} is faster than that in  Banerjee and Ghosal (2014). 
\end{remark}


For the exact banding $\gamma(k)$ with $k_0$, the results are the same as those under the spectral norm. In words,  the prior \eqref{kBCprior} with $k\geq k_0$ gives P-loss convergence rate $(\log (n\vee p)/n)^{1/2}$, which is same as the optimal minimax rate when $p \geq n$. When $p < n$, if $p =  n^\xi$ for some constant $0 < \xi < 1$, the prior achieves the Bayesian minimax rate.

For the exponentially decreasing $\gamma(k)$, the optimal choice of $k$ is $(2\beta)^{-1}\log n$. It gives the P-loss convergence rate
\bean\label{up_inf2}
\log n \cdot\left(\frac{\log (n\vee p)}{n} \right)^{1/2} .
\eean
If $p \ge (\log n \cdot \log p)^{1/2}$, the rate of \eqref{up_inf2} is same with the rate of the minimax lower bound up to $(\log n \cdot \log(n\vee p) / \log p )^{1/2}$, which is $(\log n)^{1/2}$ provided that $p\ge n^\xi$ for some $\xi>0$.

For the polynomially decreasing $\gamma(k)$, we assume that $p \ge n^{1/(2\alpha+1)}$. 
The optimal choice of the bandwidth $k=\min \{ n^{1/(2\alpha+1)}, (n/\log p)^{1/(2\alpha)} \}$ gives the P-loss convergence rate
\bean\label{up_inf1}
n^{-(\alpha-1)/(2\alpha+1)} + \left(\frac{\log p}{n} \right)^{(\alpha-1)/(2\alpha)}.
\eean
In other words, the P-loss convergence rate is $ n^{-(\alpha-1)/(2\alpha+1)}$ and $(\log p/n)^{(\alpha-1)/(2\alpha)}$ when $p \le \exp(n^{1/(2\alpha+1)})$ and $p\ge \exp(n^{1/(2\alpha+1)})$, respectively.
Thus, if $n^{1/(2\alpha+2)} \le p\le \exp(n^{1/(2\alpha+1)})$, the P-loss convergence rate \eqref{up_inf1} is equal to the rate of the minimax lower bound up to the $n^{(\alpha+2)/(2(\alpha+1)(2\alpha+1))}$ term. If $p \ge \exp(n^{1/(2\alpha+1)})$, the P-loss convergence rate \eqref{up_inf1} is equal to the rate of the minimax lower bound up to the $(n/\log p)^{(\alpha+1)/(4\alpha^2+2\alpha)}$ term.

\subsection{The Frequentist Convergence Rates and the Posterior Convergence Rates}\label{freq_post}
In this subsection, we obtain the frequentist convergence rate and the traditional posterior convergence rate of the $k$-BC prior \eqref{kBCprior}. 
For the frequentist convergence rate, we propose a plug-in estimator,
\bea
\what{\Omega}_{nk}^{LL} &:=& (I_p- \bbE^{{\pi}}(A_{n} | \bfX_n))^T \bbE^{\tilde{\pi}}(D_{n}^{-1}| \bfX_n) (I_p- \bbE^{{\pi}}(A_{n}| \bfX_n)),
\eea
where $\bbE^{\tilde{\pi}}(\cdot \mid \bfX_n)$ are posterior means using the nontruncated posteriors,
\bea
\tilde{\pi}(d_{j}\mid \bfX_n) &=& IG\left(d_{j}\mid \frac{n_j}{2}, \frac{n}{2}\what{d}_{jk}  \right), \quad j=1,\ldots,p.
\eea
The plug-in estimator $\what{\Omega}_{nk}^{LL}$ is more convenient than the posterior mean $\bbE^\pi(\Omega_{n} | \bfX_n)$ in practice because of its simple form. Note that $\bbE^{{\pi}}(a_j\mid d_{j}, \bfX_n) = \what{a}_j^{(k)}$ and $\bbE^{\tilde{\pi}}(d_{j}^{-1}\mid \bfX_n) = n_j \what{d}_{jk}^{-1}/n$.
As a justification for the use of nontruncated posterior mean, in Corollary \ref{pluginmean}, we show that $\what{\Omega}_{nk}^{LL}$ achieves the same rate with the P-loss convergence rate.
The proof of Corollary \ref{pluginmean} is given in Appendix \ref{proofsec}.

According to Proposition 2.1 of Lee and Lee (2017), the P-loss convergence rate is a posterior convergence rate. 
Thus, the rates obtained in  Theorem \ref{main_pre_spec} and Theorem \ref{main_pre_linf} in this paper are also  the posterior convergence rates.

\begin{corollary}\label{pluginmean}
	Consider the model \eqref{prec_model}, and assume $\sum_{m=1}^\infty \gamma(m) < \infty$, $\nu_0=o(n)$ and $1\le k \le p$. If $k^{3/2} (k+\log (n\vee p)) = O(n)$,
	\bea
	\sup_{\Omega_{0,n} \in \cU(\epsilon_0, \gamma)}  \bbE_{0n} \|\what{\Omega}_{nk}^{LL} -\Omega_{0,n}\|   &\lesssim& k^{3/4} \left[ \left(\frac{k+\log (n\vee p)}{n} \right)^{1/2} +  \gamma(k)\right] . 
	\eea
	If $k(k+\log (n\vee p)) = O(n)$, 
	\bea
	\sup_{\Omega_{0,n} \in \cU(\epsilon_0, \gamma)}  \bbE_{0n} \|\what{\Omega}_{nk}^{LL} -\Omega_{0,n}\|_\infty    &\lesssim& k \left[\left(\frac{k+\log (n\vee p)}{n} \right)^{1/2} +  \gamma(k)\right] .
	\eea
\end{corollary}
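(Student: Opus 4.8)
The plan is to compare $\what{\Omega}_{nk}^{LL}$ with the posterior mean $\bbE^\pi(\Omega_n\mid\bfX_n)$ of the $k$-BC posterior, taking the truncation level to be $M=9\epsilon_0^{-1}$ in the comparison --- which is legitimate since $\what{\Omega}_{nk}^{LL}$ is built from the \emph{non}truncated posteriors and so does not depend on $M$. Because $\Omega\mapsto\|\Omega-\Omega_{0,n}\|$ and $\Omega\mapsto\|\Omega-\Omega_{0,n}\|_\infty$ are convex, Jensen's inequality gives $\|\bbE^\pi(\Omega_n\mid\bfX_n)-\Omega_{0,n}\|\le\bbE^\pi(\|\Omega_n-\Omega_{0,n}\|\mid\bfX_n)$ and the analogous $\ell_\infty$ bound; taking $\bbE_{0n}$ and invoking Theorem~\ref{main_pre_spec} (resp.\ Theorem~\ref{main_pre_linf}) shows the posterior mean already attains the asserted rates. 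Hence it suffices to bound $\bbE_{0n}\|\what{\Omega}_{nk}^{LL}-\bbE^\pi(\Omega_n\mid\bfX_n)\|$ and its $\ell_\infty$ analogue by the same rates.

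To identify that difference I would compute $\bbE^\pi(\Omega_n\mid\bfX_n)$ from \eqref{post}. Writing the rows of $I_p-A_n$ as $L_1^T,\dots,L_p^T$ so that $\Omega_n=\sum_{j=1}^p d_j^{-1}L_jL_j^T$, conditioning on $d_j$, and using $\bbE^\pi(a_j\mid d_j,\bfX_n)=\what{a}_j^{(k)}$ together with the conditional covariance $(d_j/n)\what{\V}^{-1}(Z_j^{(k)})$, one gets
\bea
\bbE^\pi(\Omega_n\mid\bfX_n) &=& (I_p-\what{A}_{nk})^T\,\mathrm{diag}\big(\bbE^\pi(d_j^{-1}\mid\bfX_n)\big)\,(I_p-\what{A}_{nk}) \\
&& +\ \frac1n\sum_{j=1}^p \big[\what{\V}^{-1}(Z_j^{(k)})\big]^{\mathrm{emb}} ,
\eea
where $\what{A}_{nk}$ is the $k$-banded least-squares matrix with rows $\what{a}_j^{(k)}$ and $[\,\cdot\,]^{\mathrm{emb}}$ places a block on the index set $\{(j-k\vee1),\dots,j-1\}$. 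Since $\what{\Omega}_{nk}^{LL}$ is the first term with $\bbE^\pi(d_j^{-1}\mid\bfX_n)$ replaced by $\bbE^{\tilde\pi}(d_j^{-1}\mid\bfX_n)=n_j\what{d}_{jk}^{-1}/n$, the difference $\bbE^\pi(\Omega_n\mid\bfX_n)-\what{\Omega}_{nk}^{LL}$ equals a truncation term $(I_p-\what{A}_{nk})^T\mathrm{diag}\big(\bbE^\pi(d_j^{-1}\mid\bfX_n)-n_j\what{d}_{jk}^{-1}/n\big)(I_p-\what{A}_{nk})$ plus the bias-type term $n^{-1}\sum_j[\what{\V}^{-1}(Z_j^{(k)})]^{\mathrm{emb}}$.

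The bounds are then obtained on the high-probability event used in the proofs of Theorems~\ref{main_pre_spec} and \ref{main_pre_linf}, on which $\|I_p-\what{A}_{nk}\|\lesssim1$, the $\what{d}_{jk}$ lie in a fixed compact subset of $(0,\infty)$ uniformly in $j$, and $\|\what{\V}^{-1}(Z_j^{(k)})\|\lesssim1$; recalling $\nu_0=o(n)$, so $n_j\asymp n$, the inverse-gamma mass beyond $M=9\epsilon_0^{-1}$ is $O(e^{-cn})$, whence the truncation term has both norms of order $e^{-cn}$ (up to a polynomial factor) and is negligible. Each $[\what{\V}^{-1}(Z_j^{(k)})]^{\mathrm{emb}}$ is positive semidefinite, has spectral norm $\lesssim1$, and is supported in a window of length $\le k$; since each coordinate lies in at most $k$ such windows, a unit-vector estimate gives $\|n^{-1}\sum_j[\what{\V}^{-1}(Z_j^{(k)})]^{\mathrm{emb}}\|\lesssim k/n$, while bounding each row's $\ell_1$-norm by $\sqrt{k}\,\|\what{\V}^{-1}(Z_j^{(k)})\|$ gives $\|n^{-1}\sum_j[\what{\V}^{-1}(Z_j^{(k)})]^{\mathrm{emb}}\|_\infty\lesssim k^{3/2}/n$. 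Under $k^{3/2}(k+\log(n\vee p))=O(n)$ one has $k/n\lesssim k^{3/4}((k+\log(n\vee p))/n)^{1/2}$, and under $k(k+\log(n\vee p))=O(n)$ one has $k^{3/2}/n\lesssim k((k+\log(n\vee p))/n)^{1/2}$, so both contributions are absorbed. Off the good event I would use Cauchy--Schwarz with a polynomial moment bound on $\|\what{\Omega}_{nk}^{LL}\|$ and the super-polynomially small tail probability, exactly as in the proofs of the theorems.

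The main obstacle is the bias-type term $n^{-1}\sum_j[\what{\V}^{-1}(Z_j^{(k)})]^{\mathrm{emb}}$, which is not small term by term: one must exploit that the $p$ positive semidefinite blocks overlap only inside windows of width $k$ to get the $k/n$ spectral and $k^{3/2}/n$ matrix-$\ell_\infty$ bounds, and then check that these are dominated by the bandwidth-dependent target rates --- it is precisely this check that uses the hypotheses $k^{3/2}(k+\log(n\vee p))=O(n)$ and $k(k+\log(n\vee p))=O(n)$. Everything else --- Jensen, the explicit posterior-mean formula, the exponentially small truncation error, and the off-event bookkeeping --- is routine given the estimates already assembled for Theorems~\ref{main_pre_spec} and \ref{main_pre_linf}.
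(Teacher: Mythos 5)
Your argument is correct, but it takes a genuinely different route from the paper's. The paper's proof also begins with the triangle inequality and Jensen, reducing matters to bounding $\bbE_{0n}\|\bbE^\pi(\Omega_n\mid\bfX_n)-\what\Omega_{nk}^{LL}\|_\infty$; but it then proceeds entry by entry, using the MCD identity $\omega_{ij}=-d_j^{-1}a_{ji}+\sum_{l}d_l^{-1}a_{li}a_{lj}$ from \eqref{omega_ij} to bound each $\bbE_{0n}\big|\bbE^\pi(\Omega_{n,ij}\mid\bfX_n)-\what\Omega_{ij}^{LL}\big|$ by $O(k/n)$ (the dominant contribution coming from the posterior covariances $\C^\pi(a_{li},a_{lj}\mid d_l,\bfX_n)$, with the truncation mismatch $O(e^{-cn})$ and an $N_n^c$ remainder handled via Lemma~\ref{dhat_ahat}), and aggregating over the $O(k)$ nonzero entries per row to get $Ck^2/n$. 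You instead compute $\bbE^\pi(\Omega_n\mid\bfX_n)$ in closed matrix form as $(I_p-\what A_{nk})^T\mathrm{diag}\big(\bbE^\pi(d_j^{-1}\mid\bfX_n)\big)(I_p-\what A_{nk})+n^{-1}\sum_j[\what\V^{-1}(Z_j^{(k)})]^{\mathrm{emb}}$, so that the gap from $\what\Omega_{nk}^{LL}$ splits cleanly into a truncation term and the explicit PSD bias term; your observation that $\what\Omega_{nk}^{LL}$ does not involve $M$ (since $\bbE^\pi(a_j\mid d_j,\bfX_n)=\what a_j^{(k)}$ is $d_j$-free and $\bbE^{\tilde\pi}$ is used for $D_n$) correctly licenses invoking Theorems~\ref{main_pre_spec} and \ref{main_pre_linf} without adding an $M$ hypothesis to the corollary. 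Your route is cleaner, isolates the one genuinely non-negligible quantity, and delivers the sharper intermediate bounds $\|n^{-1}\sum_j[\cdot]^{\mathrm{emb}}\|\lesssim k/n$ and $\|\cdot\|_\infty\lesssim k^{3/2}/n$ (versus the paper's cruder $k^2/n$); these are absorbed into the target rate by elementary inequalities not even needing the full bandwidth hypothesis, whereas the paper's $k^2/n$ requires $k\lesssim n$, which does follow from the stated conditions but is a slightly more delicate check. The only part you leave at the level of a sketch --- the $N_n^c$ bookkeeping via Cauchy--Schwarz plus polynomial moment bounds --- is indeed routine given Lemma~\ref{Nnset} and Lemma~\ref{dhat_ahat}, exactly as in the paper.
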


\begin{corollary}\label{coro_postconv}
	Consider the model \eqref{prec_model} and the $k$-BC prior \eqref{kBCprior} with $M\ge 9\epsilon_0^{-1}$ and $\nu_0 = o(n)$. 
	Assume $\sum_{m=1}^\infty \gamma(m) < \infty$ and $1\le k \le p$.
	If $k^{3/2}(k+\log (n\vee p)) = O(n)$ and $\epsilon_n = k^{3/4} \left[(k+\log (n\vee p))/n )^{1/2} + \gamma(k)\right]$, then for any $M_n \to \infty$ as $n\to\infty$,
	\bea
	\sup_{\Omega_{0,n} \in \cU(\epsilon_0, \gamma)} \bbE_{0n} \Big[ \pi \big( \|\Omega_{n}-\Omega_{0,n}\| \ge M_n \epsilon_n \mid \bfX_n \big) \Big]  &\lra& 0. 
	\eea
	If $k(k+\log (n\vee p)) = O(n)$ and $\epsilon_n^* = k \big[\left((k+\log (n\vee p))/n \right)^{1/2} + \gamma(k)\big]$, then for any $M_n \to \infty$ as $n\to\infty$,
	\bea
	\sup_{\Omega_{0,n} \in \cU(\epsilon_0, \gamma)} \bbE_{0n} \Big[ \pi \big( \|\Omega_{n}-\Omega_{0,n}\|_\infty \ge M_n \epsilon_n^* \mid \bfX_n \big) \Big]  &\lra& 0.
	\eea
\end{corollary}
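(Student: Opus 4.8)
The plan is to derive the posterior concentration statement directly from the P-loss convergence rates already established in Theorem \ref{main_pre_spec} and Theorem \ref{main_pre_linf}, using nothing more than Markov's inequality applied to the posterior expectation. This is the standard route: the P-loss convergence rate controls the posterior \emph{mean} of the loss, and a first-moment bound on a nonnegative quantity immediately yields a tail bound.

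Concretely, for the spectral-norm statement I would first note that under the stated assumptions ($M \ge 9\epsilon_0^{-1}$, $\nu_0 = o(n)$, $k^{3/2}(k+\log(n\vee p)) = O(n)$, $\sum_m \gamma(m) < \infty$, $1 \le k \le p$), Theorem \ref{main_pre_spec} gives
\bea
\sup_{\Omega_{0,n} \in \cU(\epsilon_0, \gamma)} \bbE_{0n} \bbE^\pi \big( \|\Omega_{n}-\Omega_{0,n}\| \mid \bfX_n \big)  &\lesssim& \epsilon_n,
\eea
where $\epsilon_n = k^{3/4}[((k+\log(n\vee p))/n)^{1/2} + \gamma(k)]$ is exactly the rate named in the corollary. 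Conditionally on $\bfX_n$, the quantity $\|\Omega_n - \Omega_{0,n}\|$ is a nonnegative random variable under the posterior, so Markov's inequality gives, for any $M_n > 0$,
\bea
\pi\big(\|\Omega_n - \Omega_{0,n}\| \ge M_n \epsilon_n \mid \bfX_n\big) &\le& \frac{\bbE^\pi\big(\|\Omega_n - \Omega_{0,n}\| \mid \bfX_n\big)}{M_n \epsilon_n}.
\eea
Taking $\bbE_{0n}$ of both sides and then the supremum over $\Omega_{0,n} \in \cU(\epsilon_0,\gamma)$, the right-hand side is bounded by a constant multiple of $\epsilon_n/(M_n\epsilon_n) = 1/M_n$, which tends to $0$ as $n\to\infty$ whenever $M_n\to\infty$. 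The $\ell_\infty$-norm statement is proved identically, now invoking Theorem \ref{main_pre_linf} under the (weaker) assumption $k(k+\log(n\vee p)) = O(n)$, with $\epsilon_n^* = k[((k+\log(n\vee p))/n)^{1/2} + \gamma(k)]$ in place of $\epsilon_n$.

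There is essentially no obstacle here: the entire content of the corollary is packaged in the two preceding theorems, and the argument is a one-line application of Markov's inequality followed by interchanging the order of $\bbE_{0n}$ and the expectation defining the P-loss (which is legitimate since everything is nonnegative, by Tonelli). The only points to be mildly careful about are (i) making sure the implied constant in $\lesssim$ from the theorem does not depend on $\Omega_{0,n}$, which is guaranteed because the bound in Theorem \ref{main_pre_spec}/\ref{main_pre_linf} is already uniform over $\cU(\epsilon_0,\gamma)$, and (ii) noting that the hypotheses of the corollary are exactly those of the respective theorems, so no extra work is needed to apply them. This is also why the corollary is stated as a corollary rather than a theorem.
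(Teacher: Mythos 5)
Your proof is correct and follows the same route the paper intends: the paper does not spell out a separate proof of Corollary \ref{coro_postconv} but instead invokes Proposition 2.1 of Lee and Lee (2017), which is precisely the Markov-inequality observation you give, applied to the uniform bounds on the posterior mean of the loss established in Theorems \ref{main_pre_spec} and \ref{main_pre_linf}. One small remark: there is no interchange of $\bbE_{0n}$ with the posterior expectation to justify here (your Tonelli aside is unnecessary) — you simply apply Markov conditionally on $\bfX_n$, then take $\bbE_{0n}$ of both sides by monotonicity, then take the supremum.
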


\section{Choice of the Bandwidth $k$}\label{kchoice}
In this subsection, we suggest using the posterior mode of $k$ as a practical choice of the bandwidth $k$.
Using Theorem \ref{main_pre_spec} and Theorem \ref{main_pre_linf}, one can calculate the optimal {\emph{rate}} of the bandwidth $k$ minimizing the P-loss convergence rate, when the rate of $\gamma(k)$ is given. 
However,  in practice $\gamma(k)$ is not known and $k$ can not be chosen based on $\gamma(k)$.

Let $\pi(k)$ be a prior distribution for the bandwidth $k$ and $f(\bfX_n\mid A_{n}, D_{n},k)$ be the likelihood function based on the observation $\bfX_n$.
In section \ref{prec_simul}, the prior distribution of $k$ was set by $\pi(k)\propto  \exp(-k^4)$ as in Banerjee and Ghosal (2014).
The marginal posterior for $k$ is easily derived as
\begin{align}
\pi(k \mid \bfX_n) \,\,&\propto\,\, \pi(k) \int \int f(\bfX_n \mid A_{n}, D_{n}, k) \pi( A_{n}, D_{n}) d A_{n} d D_{n}  \nonumber \\
\begin{split}\label{k_post}
&\propto\,\, \pi(k)\prod_{j=2}^p \det\left(n\what{\V}(Z_j^{(k)})/(2\pi) \right)^{-1/2} \Gamma\left(\frac{n_j}{2}\right) \left(\frac{n}{2}\what{d}_{jk} \right)^{-n_j/2} \\
& \times \,\,\,\, \prod_{j=1}^p F_{IG} \left( M  \,\,\Big|\,\, n_j/2, n\what{d}_{jk}/2 \right)
\end{split}
\end{align}
by routine calculations where $F_{IG}(M \mid a,b)$ is a distribution function of $IG(a,b)$. Since the marginal posterior \eqref{k_post} has a simple analytic form, the posterior mode, say $\hat{k}$, can be easily obtained.
The performance of $\hat{k}$ is described through comparisons with other approaches in the next section. 

Note that the Cholesky-based Bayes estimator $\what{\Omega}_{nk}^{LL}$ is similar to the banded estimator (Bickel and Levina, 2008b)\nocite{bickel2008regularized}, $\what{\Omega}_{nk}^{BL}$. The major difference between two estimators is the choice of the bandwidth parameter $k$. 
It is worthwhile to compare the practical performances of the two schemes for selecting the bandwidth $k$. 
Bickel and Levina (2008b)\nocite{bickel2008regularized} proposed a resampling scheme to estimate the oracle $k$. To estimate the minimizer of the risk
\bean\label{BLrisk}
R(k) &=& \bbE_{0n}\| \what{\Omega}_{nk}^{BL} -\Omega_{0,n} \|_1 ,
\eean
they divided $n$ observations into two groups of sizes $n_1=n/3$ and $n_2 = n-n_1$, randomly. 
We computed the banded estimator $\what{\Omega}_{1,nk}^{BL}$ using the first group as an estimator for $\what{\Omega}_{nk}^{BL}$. 
Since the sample precision matrix is computationally unstable for large $p$, the banded estimator $\what{\Omega}_{2,nK}^{BL}$ was used instead of the sample precision matrix for the second group as an estimator for $\Omega_{0,n}$. Here, $K = \min(n,p)-1$, but in the simulation study in this paper, we used $K=20$ to reduce the computation time.
In the same way, $t$-th random split gives $\what{\Omega}_{1,nk}^{BL,(t)}$ and $\what{\Omega}_{2,nK}^{BL,(t)}$ for $t=1,\ldots,T$. The risk \eqref{BLrisk} was approximated by 
\bean\label{BLriskapp}
\what{R}(k) &=& \frac{1}{T} \sum_{t=1}^T \| \what{\Omega}_{1,nk}^{BL,(t)} - \what{\Omega}_{2,nK}^{BL,(t)}\|_1,
\eean
and the bandwidth $k$ was selected as $\hat{k}^{BL}= \argmin_k \what{R}(k)$. 
For more detailed description about the resampling scheme, see Bickel and Levina (2008b)\nocite{bickel2008regularized}. 

\section{Simulation Study}\label{prec_simul}
We investigated the performance of the proposed Bayes estimator $\what{\Omega}_{nk}^{LL}$ and the posterior mode $\hat{k}$. The performances of the Bayes estimator based on the $G$-Wishart prior $\what{\Omega}_{nk}^{BG}$ (Banerjee and Ghosal, 2014), the banded estimator $\what{\Omega}_{nk}^{BL}$ (Bickel and Levina, 2008b) and the graphical maximum likelihood estimator (MLE) $\what{\Omega}_{nk}^{MLE}$ (Lauritzen, 1996)\nocite{lauritzen1996graphical} were compared in various scenarios. For the proposed estimator $\what{\Omega}_{nk}^{LL}$, we used $\nu_0 = 2$ throughout this section.

Banerjee and Ghosal (2014) proposed two Bayes estimators corresponding to the Stein's loss and the squared-error loss, respectively. 
We checked the performances of two Bayes estimators, say $\what{\Omega}_{nk}^{BG1}$ and $\what{\Omega}_{nk}^{BG2}$, with $\delta=3$. 
For these estimators, the bandwidth $k$ was chosen by the posterior mode in Banerjee and Ghosal (2014), $\hat{k}^{BG}$.
We examined the performances of two banded estimator (Bickel and Levina, 2008b),  $\what{\Omega}_{nk_1}^{BL}$ and  $\what{\Omega}_{nk_2}^{BL}$, where the banding parameter $k$ of the former was chosen by $\hat{k}^{BL}$ and that of the latter was chosen by $\hat{k}$.
For the graphical MLE, the bandwidth $k$ was  set by $\hat{k}$.

The spectral norm, matrix $\ell_{\infty}$ norm and Frobenius norm were used as the loss functions. 
The sample sizes $n=100, 200$ and $500$ and the dimensions $p=100, 200$ and $500$ were investigated. For each settings, the values of the loss function,
\bean\label{loss_sim}
\| \what{\Omega}_{nk}^{(s)} - \Omega_{0,n} \| ,\quad s=1,\ldots,100,
\eean
were calculated with $100$ simulated data for each methods $\what{\Omega}_{nk}$ and loss functions $\|\cdot\|$ where $\Omega_{0,n}$ denotes the true precision matrix. The mean and standard deviation of \eqref{loss_sim} were used as summary statistics.
We considered the following true precision matrices.

\begin{example}($AR(1)$ process)
		Assume the true covariance matrix $\sg = (\sigma_{ij})$ is given by
		\bea
		\sigma_{ij} &=& \rho^{|i-j|}, \,\, 1\le i,j \le p
		\eea
		with $\rho =0.3$. Then the true precision matrix is a banded matrix with $AR(1)$ process structure. 
\end{example}

\begin{example}($AR(4)$ process)
	Assume the true precision matrix $\Omega= (\omega_{ij})$ is given by
	\bea
	\omega_{ij} &=& I(|i-j|=0) + 0.4 \cdot I(|i-j|=1) + 0.2\cdot I(|i-j|=2) \\
	&& \,\,+\,\, 0.2 \cdot I(|i-j|=3) + 0.1 \cdot I(|i-j|=4).
	\eea
	Thus, the true precision matrix is a banded matrix with the $AR(4)$ process structure. Furthermore, it is always positive definite because of the diagonally dominant property.  
\end{example}

\begin{example}(Long-range dependence)
	The last example deals with the situation where the true precision matrix is not a bandable matrix in $\cU(\epsilon_0,\gamma)$. 
	Consider a fractional Gaussian noise model and the true covariance matrix $\sg = (\sigma_{ij})$ is given by
	\bea
	\sigma_{ij} &=& \frac{1}{2} \left( ||i-j|+1 |^{2H} - 2|i-j|^{2H} + | |i-j|-1|^{2H}  \right), \,\, 1\le i,j \le p
	\eea
	with $H \in [0.5, 1]$. The Hurst parameter $H$ indicates the dependency of the process. 
	$H=0.5$ implies the white noise, while $H$ near 1 means the long-range dependence. 
	We chose $H=0.7$. In this case, the true precision matrix does not belong to the bandable class.
\end{example}

\begin{table}
	\scriptsize\centering
	\caption{Simulation results for $AR(1)$ model. For each $n$ and $p$, the mean and standard deviation (in parenthesis) of three loss functions (the spectral norm, matrix $\ell_\infty$ norm and Frobenius norm) were calculated.}
	\begin{tabular}{lll ccccc}
		\hline
		\multicolumn{3}{c}{} & LL & BG & BL1 & BL2 & MLE \\ \hline
		\multicolumn{1}{c}{\multirow{18}{*}{$n=100$}} & \multicolumn{1}{c}{\multirow{6}{*}{$p=100$}} & \multicolumn{1}{c}{\multirow{2}{*}{$\|\cdot\|$\,\,\,\,\,}} & 0.720	&	0.759	&	1.217	&	0.786	&	0.786 \\
		& & & (0.139)	&	(0.141)	&	(0.391)	&	(0.146)	&	(0.146) \\
		& & \multicolumn{1}{c}{\multirow{2}{*}{$\|\cdot\|_\infty$}} & 0.913	&	0.957	&	1.905	&	0.989	&	0.989 \\
		& & & (0.176)	&	(0.177)	&	(0.813)	&	(0.184)	&	(0.184) \\
		& & \multicolumn{1}{c}{\multirow{2}{*}{$\|\cdot\|_F$}} & 2.382	&	2.447	&	3.837	&	2.503	&	2.503 \\
		& & & (0.171)	&	(0.187)	&	(1.067)	&	(0.196)	&	(0.196) \\ \cline{2-8}
		& \multicolumn{1}{c}{\multirow{6}{*}{$p=200$}} & \multicolumn{1}{c}{\multirow{2}{*}{$\|\cdot\|$\,\,\,\,\,}} & 0.802	&	0.842	&	1.294	&	0.873	&	0.873 \\
		& & & (0.140)	&	(0.140)	&	(0.353)	&	(0.145)	&	(0.145) \\
		& & \multicolumn{1}{c}{\multirow{2}{*}{$\|\cdot\|_\infty$}} & 1.025	&	1.071	&	2.044	&	1.108	&	1.108 \\
		& & & (0.180)	&	(0.180)	&	(0.716)	&	(0.186)	&	(0.186) \\
		& & \multicolumn{1}{c}{\multirow{2}{*}{$\|\cdot\|_F$}} & 3.395	&	3.487	&	5.471	&	3.567	&	3.567 \\
		& & & (0.165)	&	(0.179)	&	(0.322)	&	(0.188)	&	(0.188) \\ \cline{2-8}
		& \multicolumn{1}{c}{\multirow{6}{*}{$p=500$}} & \multicolumn{1}{c}{\multirow{2}{*}{$\|\cdot\|$\,\,\,\,\,}} & 0.910	&	0.951	&	1.504	&	0.985	&	0.985 \\
		& & & (0.147)	&	(0.146)	&	(0.417)	&	(0.152)	&	(0.152) \\
		& & \multicolumn{1}{c}{\multirow{2}{*}{$\|\cdot\|_\infty$}} & 1.151	&	1.196	&	2.412	&	1.239	&	1.239 \\
		& & & (0.181)	&	(0.181)	&	(0.928)	&	(0.188)	&	(0.188) \\
		& & \multicolumn{1}{c}{\multirow{2}{*}{$\|\cdot\|_F$}} & 5.377	&	5.521	&	9.070	&	5.647	&	5.647 \\
		& & & (0.172)	&	(0.186)	&	(2.353)	&	(0.353)	&	(0.195) \\ \hline
		\multicolumn{1}{c}{\multirow{18}{*}{$n=200$}} & \multicolumn{1}{c}{\multirow{6}{*}{$p=100$}} & \multicolumn{1}{c}{\multirow{2}{*}{$\|\cdot\|$\,\,\,\,\,}} & 0.482	&	0.498	&	0.585	&	0.507	&	0.507 \\
		& & & (0.090)	&	(0.094)	&	(0.165)	&	(0.096)	&	(0.096) \\
		& & \multicolumn{1}{c}{\multirow{2}{*}{$\|\cdot\|_\infty$}} & 0.619	&	0.636	&	0.815	&	0.646	&	0.646 \\
		& & & (0.117)	&	(0.121)	&	(0.315)	&	(0.124)	&	(0.124) \\
		& & \multicolumn{1}{c}{\multirow{2}{*}{$\|\cdot\|_F$}} & 1.673	&	1.696	&	1.991	&	1.714	&	1.714 \\
		& & & (0.110)	&	(0.116)	&	(0.442)	&	(0.119)	&	(0.119) \\ \cline{2-8}
		& \multicolumn{1}{c}{\multirow{6}{*}{$p=200$}} & \multicolumn{1}{c}{\multirow{2}{*}{$\|\cdot\|$\,\,\,\,\,}} & 0.537	&	0.556	&	0.644	&	0.567	&	0.567 \\
		& & & (0.098)	&	(0.100)	&	(0.154)	&	(0.102)	&	(0.102) \\
		& & \multicolumn{1}{c}{\multirow{2}{*}{$\|\cdot\|_\infty$}} & 0.685	&	0.706	&	0.896	&	0.718	&	0.718 \\
		& & & (0.127)	&	(0.130)	&	(0.277)	&	(0.133)	&	(0.133) \\
		& & \multicolumn{1}{c}{\multirow{2}{*}{$\|\cdot\|_F$}} & 2.374	&	2.406	&	2.851	&	2.432	&	2.432 \\
		& & & (0.113)	&	(0.121)	&	(0.544)	&	(0.124)	&	(0.124) \\ \cline{2-8}
		& \multicolumn{1}{c}{\multirow{6}{*}{$p=500$}} & \multicolumn{1}{c}{\multirow{2}{*}{$\|\cdot\|$\,\,\,\,\,}} & 0.594	&	0.615	&	0.747	&	0.626	&	0.626 \\
		& & & (0.080)	&	(0.080)	&	(0.156)	&	(0.082)	&	(0.082) \\
		& & \multicolumn{1}{c}{\multirow{2}{*}{$\|\cdot\|_\infty$}} & 0.755	&	0.777	&	1.054	&	0.792	&	0.792 \\
		& & & (0.108)	&	(0.109)	&	(0.326)	&	(0.111)	&	(0.111) \\
		& & \multicolumn{1}{c}{\multirow{2}{*}{$\|\cdot\|_F$}} & 3.762	&	3.813	&	4.692	&	3.855	&	3.855 \\
		& & & (0.104)	&	(0.111)	&	(0.866)	&	(0.114)	&	(0.114) \\ \hline
		\multicolumn{1}{c}{\multirow{18}{*}{$n=500$}} & \multicolumn{1}{c}{\multirow{6}{*}{$p=100$}} & \multicolumn{1}{c}{\multirow{2}{*}{$\|\cdot\|$\,\,\,\,\,}} & 0.287	&	0.292	&	0.309	&	0.295	&	0.295 \\
		& & & (0.045)	&	(0.046)	&	(0.055)	&	(0.047)	&	(0.047) \\
		& & \multicolumn{1}{c}{\multirow{2}{*}{$\|\cdot\|_\infty$}} & 0.368	&	0.373	&	0.404	&	0.376	&	0.376 \\
		& & & (0.060)	&	(0.063)	&	(0.084)	&	(0.064)	&	(0.064) \\
		& & \multicolumn{1}{c}{\multirow{2}{*}{$\|\cdot\|_F$}} & 1.053	&	1.060	&	1.110	&	1.064	&	1.064 \\
		& & & (0.065)	&	(0.067)	&	(0.110)	&	(0.067)	&	(0.067) \\ \cline{2-8}
		& \multicolumn{1}{c}{\multirow{6}{*}{$p=200$}} & \multicolumn{1}{c}{\multirow{2}{*}{$\|\cdot\|$\,\,\,\,\,}} & 0.314	&	0.321	&	0.340	&	0.324	&	0.324 \\
		& & & (0.045)	&	(0.046)	&	(0.065)	&	(0.047)	&	(0.047) \\
		& & \multicolumn{1}{c}{\multirow{2}{*}{$\|\cdot\|_\infty$}} & 0.405	&	0.412	&	0.445	&	0.415	&	0.415 \\
		& & & (0.059)	&	(0.061)	&	(0.101)	&	(0.062)	&	(0.062) \\
		& & \multicolumn{1}{c}{\multirow{2}{*}{$\|\cdot\|_F$}} & 1.489	&	1.497	&	1.565	&	1.503	&	1.503 \\
		& & & (0.073)	&	(0.074)	&	(0.172)	&	(0.074)	&	(0.074) \\ \cline{2-8}
		& \multicolumn{1}{c}{\multirow{6}{*}{$p=500$}} & \multicolumn{1}{c}{\multirow{2}{*}{$\|\cdot\|$\,\,\,\,\,}} & 0.340	&	0.347	&	0.359	&	0.350	&	0.350 \\
		& & & (0.042)	&	(0.042)	&	(0.051)	&	(0.043)	&	(0.043) \\
		& & \multicolumn{1}{c}{\multirow{2}{*}{$\|\cdot\|_\infty$}} & 0.436	&	0.444	&	0.465	&	0.448	&	0.448 \\
		& & & (0.053)	&	(0.053)	&	(0.078)	&	(0.053)	&	(0.053) \\
		& & \multicolumn{1}{c}{\multirow{2}{*}{$\|\cdot\|_F$}} & 2.352	&	2.365	&	2.433	&	2.375	&	2.375 \\
		& & & (0.069)	&	(0.070)	&	(0.188)	&	(0.071)	&	(0.071) \\ \hline
	\end{tabular}\label{table:sim1}
\end{table}

\begin{table}
	\scriptsize\centering
	\caption{Simulation results for $AR(4)$ model. For each $n$ and $p$, the mean and standard deviation (in parenthesis) of three loss functions (the spectral norm, matrix $\ell_\infty$ norm and Frobenius norm) were calculated.}
	\begin{tabular}{lll ccccc}
		\hline
		\multicolumn{3}{c}{} & LL & BG & BL1 & BL2 & MLE \\ \hline
		\multicolumn{1}{c}{\multirow{18}{*}{$n=100$}} & \multicolumn{1}{c}{\multirow{6}{*}{$p=100$}} & \multicolumn{1}{c}{\multirow{2}{*}{$\|\cdot\|$\,\,\,\,\,}} & 1.510	&	1.475	&	1.481	&	1.473	&	1.473 \\
		& & & (0.040)	&	(0.041)	&	(0.340)	&	(0.041)	&	(0.041) \\
		& & \multicolumn{1}{c}{\multirow{2}{*}{$\|\cdot\|_\infty$}} & 1.854	&	1.826	&	2.446	&	1.827	&	1.827 \\
		& & & (0.058)	&	(0.061)	&	(0.607)	&	(0.063)	&	(0.063) \\
		& & \multicolumn{1}{c}{\multirow{2}{*}{$\|\cdot\|_F$}} & 5.130	&	5.050	&	4.189	&	5.046	&	5.046 \\
		& & & (0.070)	&	(0.065)	&	(0.620)	&	(0.065)	&	(0.065) \\ \cline{2-8}
		& \multicolumn{1}{c}{\multirow{6}{*}{$p=200$}} & \multicolumn{1}{c}{\multirow{2}{*}{$\|\cdot\|$\,\,\,\,\,}} & 1.541	&	1.506	&	1.668	&	1.504	&	1.504 \\
		& & & (0.034)	&	(0.035)	&	(0.395)	&	(0.035)	&	(0.035) \\
		& & \multicolumn{1}{c}{\multirow{2}{*}{$\|\cdot\|_\infty$}} & 1.899	&	1.873	&	2.873	&	1.874	&	1.874 \\
		& & & (0.065)	&	(0.069)	&	(0.678)	&	(0.071)	&	(0.071) \\
		& & \multicolumn{1}{c}{\multirow{2}{*}{$\|\cdot\|_F$}} & 7.312	&	7.196	&	6.015	&	7.191	&	7.191 \\
		& & & (0.072)	&	(0.068)	&	(0.840)	&	(0.068)	&	(0.068) \\ \cline{2-8}
		& \multicolumn{1}{c}{\multirow{6}{*}{$p=500$}} & \multicolumn{1}{c}{\multirow{2}{*}{$\|\cdot\|$\,\,\,\,\,}} & 1.564	&	1.530	&	1.884	&	1.528	&	1.528 \\
		& & & (0.029)	&	(0.030)	&	(0.368)	&	(0.030)	&	(0.030) \\
		& & \multicolumn{1}{c}{\multirow{2}{*}{$\|\cdot\|_\infty$}} & 1.938	&	1.913	&	3.061	&	1.915	&	1.915 \\
		& & & (0.052)	&	(0.056)	&	(0.654)	&	(0.057)	&	(0.057) \\
		& & \multicolumn{1}{c}{\multirow{2}{*}{$\|\cdot\|_F$}} & 11.610	&	11.426	&	9.620	&	11.417	&	11.417 \\
		& & & (0.076)	&	(0.072)	&	(1.288)	&	(0.072)	&	(0.072) \\ \hline
		\multicolumn{1}{c}{\multirow{18}{*}{$n=200$}} & \multicolumn{1}{c}{\multirow{6}{*}{$p=100$}} & \multicolumn{1}{c}{\multirow{2}{*}{$\|\cdot\|$\,\,\,\,\,}} & 1.313	&	1.461	&	0.843	&	1.288	&	1.288 \\
		& & & (0.314)	&	(0.027)	&	(0.168)	&	(0.325)	&	(0.325) \\
		& & \multicolumn{1}{c}{\multirow{2}{*}{$\|\cdot\|_\infty$}} & 1.616	&	1.734	&	1.366	&	1.596	&	1.596 \\
		& & & (0.273)	&	(0.050)	&	(0.284)	&	(0.280)	&	(0.280) \\
		& & \multicolumn{1}{c}{\multirow{2}{*}{$\|\cdot\|_F$}} & 4.477	&	4.949	&	2.513	&	4.431	&	4.431 \\
		& & & (0.980)	&	(0.049)	&	(0.260)	&	(0.972)	&	(0.972) \\ \cline{2-8}
		& \multicolumn{1}{c}{\multirow{6}{*}{$p=200$}} & \multicolumn{1}{c}{\multirow{2}{*}{$\|\cdot\|$\,\,\,\,\,}} & 0.972	&	1.482	&	0.482	&	0.934	&	0.934 \\
		& & & (0.289)	&	(0.027)	&	(0.171)	&	(0.299)	&	(0.299) \\
		& & \multicolumn{1}{c}{\multirow{2}{*}{$\|\cdot\|_\infty$}} & 1.343	&	1.759	&	1.457	&	1.319	&	1.319 \\
		& & & (0.245)	&	(0.043)	&	(0.280)	&	(0.249)	&	(0.249) \\
		& & \multicolumn{1}{c}{\multirow{2}{*}{$\|\cdot\|_F$}} & 4.574	&	7.047	&	3.528	&	4.502	&	4.502 \\
		& & & (1.357)	&	(0.054)	&	(0.336)	&	(1.356)	&	(1.356) \\ \cline{2-8}
		& \multicolumn{1}{c}{\multirow{6}{*}{$p=500$}} & \multicolumn{1}{c}{\multirow{2}{*}{$\|\cdot\|$\,\,\,\,\,}} & 0.871	&	1.499	&	1.015	&	0.840	&	0.840 \\
		& & & (0.052)	&	(0.023)	&	(0.169)	&	(0.059)	&	(0.059) \\
		& & \multicolumn{1}{c}{\multirow{2}{*}{$\|\cdot\|_\infty$}} & 1.300	&	1.800	&	1.643	&	1.291	&	1.291 \\
		& & & (0.097)	&	(0.041)	&	(0.303)	&	(0.117)	&	(0.117) \\
		& & \multicolumn{1}{c}{\multirow{2}{*}{$\|\cdot\|_F$}} & 6.083	&	11.200	&	5.686	&	6.001	&	6.001 \\
		& & & (0.345)	&	(0.058)	&	(0.554)	&	(0.226)	&	(0.226) \\ \hline
		\multicolumn{1}{c}{\multirow{18}{*}{$n=500$}} & \multicolumn{1}{c}{\multirow{6}{*}{$p=100$}} & \multicolumn{1}{c}{\multirow{2}{*}{$\|\cdot\|$\,\,\,\,\,}} & 0.501	&	1.052	&	0.450	&	0.513	&	0.513 \\
		& & & (0.139)	&	(0.395)	&	(0.084)	&	(0.122)	&	(0.122) \\
		& & \multicolumn{1}{c}{\multirow{2}{*}{$\|\cdot\|_\infty$}} & 0.767	&	1.281	&	0.733	&	0.784	&	0.784 \\
		& & & (0.151)	&	(0.355)	&	(0.144)	&	(0.137)	&	(0.137) \\
		& & \multicolumn{1}{c}{\multirow{2}{*}{$\|\cdot\|_F$}} & 1.663	&	3.573	&	1.439	&	1.676	&	1.676 \\
		& & & (0.444)	&	(1.324)	&	(0.118)	&	(0.411)	&	(0.411) \\ \cline{2-8}
		& \multicolumn{1}{c}{\multirow{6}{*}{$p=200$}} & \multicolumn{1}{c}{\multirow{2}{*}{$\|\cdot\|$\,\,\,\,\,}} & 0.447	&	0.807	&	0.481	&	0.473	&	0.473 \\
		& & & (0.063)	&	(0.255)	&	(0.067)	&	(0.067)	&	(0.067) \\
		& & \multicolumn{1}{c}{\multirow{2}{*}{$\|\cdot\|_\infty$}} & 0.722	&	1.083	&	0.768	&	0.754	&	0.754 \\
		& & & (0.080)	&	(0.229)	&	(0.094)	&	(0.088)	&	(0.088) \\
		& & \multicolumn{1}{c}{\multirow{2}{*}{$\|\cdot\|_F$}} & 1.939	&	3.764	&	2.010	&	1.985	&	1.985 \\
		& & & (0.068)	&	(1.245)	&	(0.107)	&	(0.075)	&	(0.075) \\ \cline{2-8}
		& \multicolumn{1}{c}{\multirow{6}{*}{$p=500$}} & \multicolumn{1}{c}{\multirow{2}{*}{$\|\cdot\|$\,\,\,\,\,}} & 0.493	&	0.737	&	0.530	&	0.522	&	0.522 \\
		& & & (0.081)	&	(0.044)	&	(0.084)	&	(0.085)	&	(0.085) \\
		& & \multicolumn{1}{c}{\multirow{2}{*}{$\|\cdot\|_\infty$}} & 0.784	&	1.036	&	0.835	&	0.820	&	0.820 \\
		& & & (0.111)	&	(0.056)	&	(0.117)	&	(0.116)	&	(0.116) \\
		& & \multicolumn{1}{c}{\multirow{2}{*}{$\|\cdot\|_F$}} & 3.069	&	5.151	&	3.189	&	3.139	&	3.139 \\
		& & & (0.067)	&	(0.456)	&	(0.141)	&	(0.076)	&	(0.076) \\ \hline
	\end{tabular}\label{table:sim2}
\end{table}

\begin{table}
	\scriptsize\centering
	\caption{Simulation results for fractional Gaussian noise model. For each $n$ and $p$, the mean and standard deviation (in parenthesis) of three loss functions (the spectral norm, matrix $\ell_\infty$ norm and Frobenius norm) were calculated.}
	\begin{tabular}{lll ccccc}
		\hline
		\multicolumn{3}{c}{} & LL & BG & BL1 & BL2 & MLE \\ \hline
		\multicolumn{1}{c}{\multirow{18}{*}{$n=100$}} & \multicolumn{1}{c}{\multirow{6}{*}{$p=100$}} & \multicolumn{1}{c}{\multirow{2}{*}{$\|\cdot\|$\,\,\,\,\,}} & 0.837  &	0.880  &	1.232  &	0.911  &	0.911 \\
		& & & (0.149)  &	(0.149)  &	(0.357)  &	(0.154)  &	(0.154) \\
		& & \multicolumn{1}{c}{\multirow{2}{*}{$\|\cdot\|_\infty$}} & 1.588  &	1.636  &	2.284  &	1.676  &	1.676 \\
		& & & (0.194)  &	(0.193)  &	(0.698)  &	(0.200)  &	(0.200) \\
		& & \multicolumn{1}{c}{\multirow{2}{*}{$\|\cdot\|_F$}} & 2.879  &	2.955  &	3.980  &	3.030  &	3.030 \\
		& & & (0.194)  &	(0.211)  &	(0.908)  &	(0.222)  &	(0.222) \\ \cline{2-8}
		& \multicolumn{1}{c}{\multirow{6}{*}{$p=200$}} & \multicolumn{1}{c}{\multirow{2}{*}{$\|\cdot\|$\,\,\,\,\,}} & 0.931  &	0.973  &	1.326  &	1.008  &	1.008 \\
		& & & (0.147)  &	(0.146)  &	(0.335)  &	(0.152)  &	(0.152) \\
		& & \multicolumn{1}{c}{\multirow{2}{*}{$\|\cdot\|_\infty$}} & 1.752  &	1.800  &	2.495  &	1.843  &	1.843 \\
		& & & (0.188)  &	(0.187)  &	(0.654)  &	(0.194)  &	(0.194) \\
		& & \multicolumn{1}{c}{\multirow{2}{*}{$\|\cdot\|_F$}} & 4.100  &	4.208  &	5.806  &	4.315  &	4.315 \\
		& & & (0.178)  &	(0.194)  &	(1.236)  &	(0.204)  &	(0.204) \\ \cline{2-8}
		& \multicolumn{1}{c}{\multirow{6}{*}{$p=500$}} & \multicolumn{1}{c}{\multirow{2}{*}{$\|\cdot\|$\,\,\,\,\,}} & 1.043  &	1.085  &	1.493  &	1.124  &	1.124 \\
		& & & (0.156)  &	(0.155)  &	(0.348)  &	(0.161)  &	(0.161) \\
		& & \multicolumn{1}{c}{\multirow{2}{*}{$\|\cdot\|_\infty$}} & 1.929  &	1.976  &	2.800  &	2.025  &	2.025 \\
		& & & (0.196)  &	(0.194)  &	(0.740)  &	(0.202)  &	(0.202) \\
		& & \multicolumn{1}{c}{\multirow{2}{*}{$\|\cdot\|_F$}} & 6.478  &	6.648  &	9.321  &	6.817  &	6.817 \\
		& & & (0.185)  &	(0.202)  &	(0.945)  &	(0.212)  &	(0.212) \\ \hline
		\multicolumn{1}{c}{\multirow{18}{*}{$n=200$}} & \multicolumn{1}{c}{\multirow{6}{*}{$p=100$}} & \multicolumn{1}{c}{\multirow{2}{*}{$\|\cdot\|$\,\,\,\,\,}} & 0.601  &	0.622  &	0.659  &	0.634  &	0.634 \\
		& & & (0.096)  &	(0.096)  &	(0.126)  &	(0.098)  &	(0.098) \\
		& & \multicolumn{1}{c}{\multirow{2}{*}{$\|\cdot\|_\infty$}} & 1.269  &	1.293  &	1.360  &	1.307  &	1.307 \\
		& & & (0.137)  &	(0.138)  &	(0.217)  &	(0.140)  &	(0.140) \\
		& & \multicolumn{1}{c}{\multirow{2}{*}{$\|\cdot\|_F$}} & 2.287  &	2.318  &	2.435  &	2.347  &	2.347 \\
		& & & (0.123)  &	(0.131)  &	(0.278)  &	(0.134)  &	(0.134) \\ \cline{2-8}
		& \multicolumn{1}{c}{\multirow{6}{*}{$p=200$}} & \multicolumn{1}{c}{\multirow{2}{*}{$\|\cdot\|$\,\,\,\,\,}} & 0.665  &	0.687  &	0.719  &	0.699  &	0.699 \\
		& & & (0.111)  &	(0.111)  &	(0.116)  &	(0.113)  &	(0.113) \\
		& & \multicolumn{1}{c}{\multirow{2}{*}{$\|\cdot\|_\infty$}} & 1.400  &	1.424  &	1.484  &	1.440  &	1.440 \\
		& & & (0.148)  &	(0.148)  &	(0.175)  &	(0.150)  &	(0.150) \\
		& & \multicolumn{1}{c}{\multirow{2}{*}{$\|\cdot\|_F$}} & 3.244  &	3.289  &	3.465  &	3.330  &	3.330 \\
		& & & (0.131)  &	(0.139)  &	(0.289)  &	(0.143)  &	(0.143) \\ \cline{2-8}
		& \multicolumn{1}{c}{\multirow{6}{*}{$p=500$}} & \multicolumn{1}{c}{\multirow{2}{*}{$\|\cdot\|$\,\,\,\,\,}} & 0.733  &	0.755  &	0.801  &	0.769  &	0.769 \\
		& & & (0.092)  &	(0.092)  &	(0.104)  &	(0.094)  &	(0.094) \\
		& & \multicolumn{1}{c}{\multirow{2}{*}{$\|\cdot\|_\infty$}} & 1.526  &	1.551  &	1.646  &	1.568  &	1.568 \\
		& & & (0.127)  &	(0.127)  &	(0.181)  &	(0.129)  &	(0.129) \\
		& & \multicolumn{1}{c}{\multirow{2}{*}{$\|\cdot\|_F$}} & 5.141  &	5.212  &	5.547  &	5.277  &	5.277 \\
		& & & (0.117)  &	(0.124)  &	(0.464)  &	(0.128)  &	(0.128) \\ \hline
		\multicolumn{1}{c}{\multirow{18}{*}{$n=500$}} & \multicolumn{1}{c}{\multirow{6}{*}{$p=100$}} & \multicolumn{1}{c}{\multirow{2}{*}{$\|\cdot\|$\,\,\,\,\,}} & 0.406  &	0.434  &	0.436  &	0.420  &	0.420 \\
		& & & (0.047)  &	(0.043)  &	(0.046)  &	(0.049)  &	(0.049) \\
		& & \multicolumn{1}{c}{\multirow{2}{*}{$\|\cdot\|_\infty$}} & 1.004  &	1.004  &	1.046  &	1.020  &	1.020 \\
		& & & (0.076)  &	(0.073)  &	(0.074)  &	(0.077)  &	(0.077) \\
		& & \multicolumn{1}{c}{\multirow{2}{*}{$\|\cdot\|_F$}} & 1.722  &	1.875  &	1.869  &	1.742  &	1.742 \\
		& & & (0.064)  &	(0.066)  &	(0.084)  &	(0.066)  &	(0.066) \\ \cline{2-8}
		& \multicolumn{1}{c}{\multirow{6}{*}{$p=200$}} & \multicolumn{1}{c}{\multirow{2}{*}{$\|\cdot\|$\,\,\,\,\,}} & 0.433  &	0.467  &	0.468  &	0.448  &	0.448 \\
		& & & (0.045)  &	(0.044)  &	(0.046)  &	(0.046)  &	(0.046) \\
		& & \multicolumn{1}{c}{\multirow{2}{*}{$\|\cdot\|_\infty$}} & 1.086  &	1.128  &	1.130  &	1.105  &	1.105 \\
		& & & (0.073)  &	(0.073)  &	(0.074)  &	(0.074)  &	(0.074) \\
		& & \multicolumn{1}{c}{\multirow{2}{*}{$\|\cdot\|_F$}} & 2.433  &	2.649  &	2.647  &	2.460  &	2.460 \\
		& & & (0.063)  &	(0.065)  &	(0.085)  &	(0.065)  &	(0.065) \\ \cline{2-8}
		& \multicolumn{1}{c}{\multirow{6}{*}{$p=500$}} & \multicolumn{1}{c}{\multirow{2}{*}{$\|\cdot\|$\,\,\,\,\,}} & 0.461  &	0.495  &	0.498  &	0.476  &	0.476 \\
		& & & (0.043)  &	(0.039)  &	(0.040)  &	(0.044)  &	(0.044) \\
		& & \multicolumn{1}{c}{\multirow{2}{*}{$\|\cdot\|_\infty$}} & 1.164  &	1.203  &	1.209  &	1.183  &	1.183 \\
		& & & (0.073)  &	(0.062)  &	(0.061)  &	(0.074)  &	(0.074) \\
		& & \multicolumn{1}{c}{\multirow{2}{*}{$\|\cdot\|_F$}} & 3.856  &	4.189  &	4.202  &	3.900  &	3.900 \\
		& & & (0.062)  &	(0.071)  &	(0.085)  &	(0.064)  &	(0.064) \\ \hline
	\end{tabular}\label{table:sim3}
\end{table}

\begin{figure*}[!b]
	\centering
	\includegraphics[width=16cm,height=7cm]{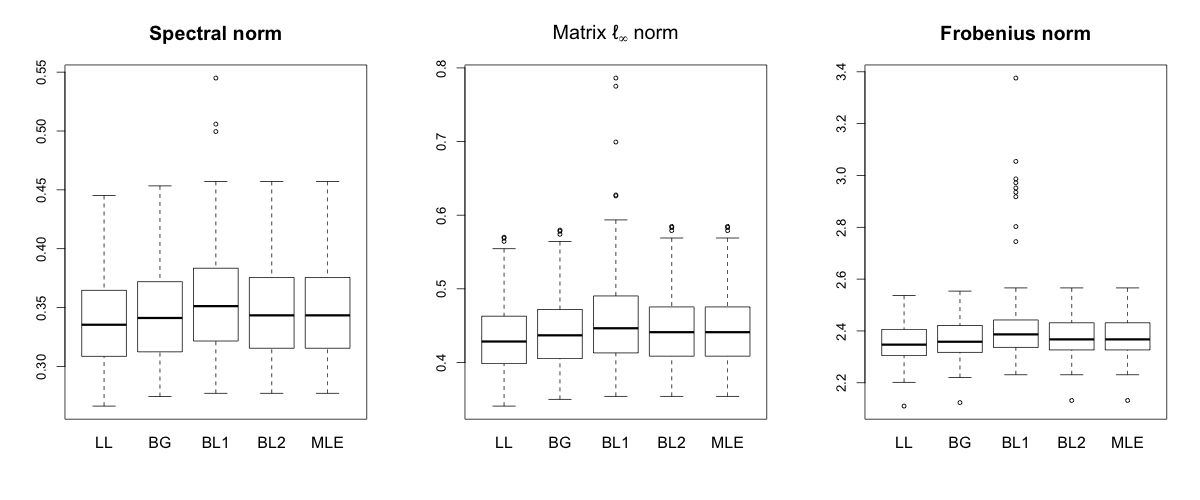}
	\vspace{-2cm}\caption{The average errors for $AR(4)$ process structure precision matrix under the spectral norm, matrix $\ell_{\infty}$ norm and Frobenius norm. The sample size $n$ and the dimensionality $p$ are 500.}
	\label{fig:boxplot}
\end{figure*}
Table \ref{table:sim1}--\ref{table:sim3} show the simulation results for the above three examples, and Figure \ref{fig:boxplot} shows the performance of each estimator when the true precision matrix is $AR(4)$ process and $(n,p)=(500,500)$.
We omitted the estimator $\what{\Omega}_{nk}^{BG2}$ because its performance is quite similar to that of $\what{\Omega}_{nk}^{BG1}$ throughout the all scenarios. $BG$ in table \ref{table:sim1}--\ref{table:sim3} and Figure \ref{fig:boxplot} represents $\what{\Omega}_{nk}^{BG1}$.
There are two major remarks on the simulation results.
First, it seems that the proposed Bayes estimator is practically comparable or better than the method of Banerjee and Ghosal (2014).
Since theoretical results in this paper and in Banerjee and Ghosal (2014) are based on the optimal choice of $k$ depending some unknown parameters, the practical performances using the posterior modes $\hat{k}$ are of independent interest. 
Especially when the sample size is large, the performance of $\what{\Omega}_{nk}^{BL}$ is often better than $\what{\Omega}_{nk}^{BG1}$.
Second, our selection scheme for $k$ is comparable to that of Bickel and Levina (2008b).
The $BL1$ columns and $BL2$ columns in tables show the results for banded estimators with $k$ chosen by $\hat{k}^{BL}$ and $\hat{k}$, respectively.
They show similar performance in our simulation study.

\section{Discussion}\label{prec_disc}
We suggested the $k$-BC prior \eqref{kBCprior} for bandable precision matrices via the MCD. 
The P-loss convergence rates for precision matrices under the spectral norm and matrix $\ell_\infty$ norm were established. 
Although the P-loss convergence rates are slightly slower than the rate of the Bayesian minimax lower bounds, the proposed approach attains faster posterior convergence rate than those of the other existing Bayesian methods.
Simulation study supported that its practical performance is comparable to those of other competitive approaches.


There are a few possible extensions of this paper related to the bandwidth $k$. 
Firstly, theoretical results in this paper depend on the unknown parameter of $\gamma(k)$. To choose the optimal $k$, one should know about the rate of $\gamma(k)$.
Thus, developing an adaptive procedure, which simultaneously attains a reasonable convergence rate regardless of $\gamma(k)$, is one of the possible extension. 
Secondly, the theoretical property of the posterior mode $\hat{k}$ is unexplored. A theoretical result similar to the Theorem 4 in Bickel and Levina (2008a) can be investigated.

\section{Proofs}\label{proofsec}

\subsection{Proof of Proposition \ref{equi_para}}
\begin{proof}
		We only prove the exponentially decreasing case, $\gamma(k)= Ce^{-\beta k}$ for some $\beta>0$ and $C>0$, because the proposition is trivially hold for the exact banding case. 
		
		Suppose $\Omega \in \cU(\epsilon_0,\gamma)$ and let $\Omega= (I_p -A)^T D^{-1}(I_p -A)$ where $A=(a_{ij})$ and $D=diag(d_j)$. 
		One can check that $\|D^{-1}\| \le \epsilon_0^{-1}$ and
		\bea
		\|A\|_{\max} &\le& \max_j \|a_j \|_2 \\
		&\le& \max_j \|\V(Z_j)^{-1}\| \| \V(Z_{j+1})\| \,\,\le\,\, \epsilon_0^{-2}.
		\eea
		
		Furthermore,
		\bean
		\|A- B_k(A)\|_\infty = \max_{i} \sum_{j<i-k}|a_{ij}| &\le& C\gamma(k) ,\label{Ainfband_assmp} \\
		\|A- B_k(A)\|_1 = \max_j \sum_{i>j+k} |a_{ij}| &\le& \sum_{m=k}^\infty \gamma(m) \,\,\le\,\, C'\gamma(k) , \label{Al1band_assmp}
		\eean
		for some $C'>1$ because $\gamma(k) = Ce^{-\beta k}$. 
		Note that $\omega_{pp} = d_p^{-1}$ and
		\bean
		\begin{split}\label{omega_ij}
			\omega_{ij} &=\,\, -d_j^{-1} a_{ji} + \sum_{l=j+1}^p d_l^{-1} a_{li}a_{lj} \quad\text{ for any $1\le i<j \le  p $.}
		\end{split}
		\eean
		
		Then for $1\le i<p$, define $k$ so that $i= p-k-1$. Then, $k \geq 0$ and
		\bea
		|\omega_{ip}| &=& d_p^{-1} |a_{pi}| \\
		&\le& C \epsilon_0^{-1}\gamma(k)
		\eea
		by \eqref{Ainfband_assmp}.
		On the other hand, for $1\le i<j \le p-1$, define $k$ so that $j-i=k+1$. Then, $k \geq 0$ and 
		\bea
		|\omega_{ij}| &=& | -d_ja_{ji} + \sum_{l=j+1}^p d_l^{-1}a_{li}a_{lj}| \\
		&\le& d_j^{-1}|a_{ji}| + \sum_{l=j+1}^p d_l^{-1}|a_{li} a_{lj}| \\
		&\le& \epsilon_0^{-3}\left(|a_{ji}| +  \sum_{l=j+1}^p |a_{li}|\right) \\
		&=& \epsilon_0^{-3} \sum_{l=j}^p |a_{li}| \,\,\le\,\, \epsilon_0^{-3} C' \gamma(k)
		\eea
		
		by \eqref{Al1band_assmp}. Thus, we have
		\bea
		\| \Omega - B_k(\Omega)\|_\infty &=& \max_i \sum_{j:|i-j|>k} |\omega_{ij}|\\
		&\le& \max_i \sum_{j>i+k}|\omega_{ij}| + \max_i \sum_{j<i-k} |\omega_{ji}|  \\
		&\le& 2\epsilon_0^{-3}C' \sum_{m=k}^\infty \gamma(m)   \,\,\le\,\, C''\gamma(k)
		\eea
		for some constant $C''>0$. This proves the first inequality.

		Suppose $\Omega \in \cU^*(\epsilon_0, \gamma)$. We need to prove that $\max_{i} \sum_{j<i-k}|a_{ij}| = \max_{i} \sum_{j=1}^{i-k-1}|a_{ij}| \le C\gamma(k)$ for some constant $C>0$. Note that from \eqref{omega_ij}, we have 
		\bean\label{t=0}
		d_p^{-1}\sum_{j=1}^{p-k-1} |a_{pj}| &=& \sum_{j=1}^{p-k-1}|\omega_{jp}|   \,\,\le\,\,  \gamma(k),
		\eean
		for any $0\le k \le p-2$. 
		We will show that 
		\bean\label{asmaller_gamma}
		d_{p-t}^{-1} \sum_{j=1}^{p-t-k-1} |a_{p-t,j}| &\le& \gamma(k) + \epsilon_0^{-2}\sum_{m=1}^t (1+\epsilon_0^{-2})^{m-1} \gamma(k+m)
		\eean
		for any $1\le t \le p-k-2$ for some $0\le k \le p-3$. 
		Then, \eqref{t=0} and \eqref{asmaller_gamma} imply $\Omega \in \cU(\epsilon_0,C\gamma)$ for some $C>0$ because $\max_{j}d_j \le \max_j \V(X_j) \le \epsilon_0^{-1}$ and we assume that $\gamma(k) = e^{-\beta k}$ and $\beta > \log (\epsilon_0^{-2}+1)$.

		By \eqref{omega_ij} and the assumption $\Omega \in \cU^*(\epsilon_0,\gamma)$,
		\bean\label{pminus1}
		\sum_{j=1}^{p-k-2}|-d_{p-1}^{-1}a_{p-1,j} + d_p^{-1}a_{pj}a_{p,p-1}| &=&  \sum_{j=1}^{p-k-2}|\omega_{j,p-1}|  \,\,\le\,\,\gamma(k)
		\eean
		for any $0\le k\le p-3$.
		Thus, \eqref{t=0} and \eqref{pminus1} imply that
		\bea
		d_{p-1}^{-1}\sum_{j=1}^{p-k-2}|a_{p-1,j}| &\le& \sum_{j=1}^{p-k-2}|-d_{p-1}^{-1}a_{p-1,j} + d_p^{-1}a_{pj}a_{p,p-1}| +  \sum_{j=1}^{p-k-2} |d_p^{-1}a_{pj}a_{p,p-1}|  \\
		&\le& \gamma(k) + \epsilon_0^{-2}\gamma(k+1) 
		\eea
		because $\Omega \in \cU^*(\epsilon_0, \gamma)$ means $|a_{p,p-1}|\le \|A\|_{\max} \le \epsilon_0^{-2}$. Thus, \eqref{asmaller_gamma} holds for $t=1$. 
		Now assume that \eqref{asmaller_gamma} holds for $t-1$ and consider for the case of $t$.
		Note that
		\bea
		\gamma(k)&\ge& \sum_{j=1}^{p-t-k-1}|\omega_{j,p-t}| \\
		&=& \sum_{j=1}^{p-t-k-1} \left| -d_{p-t}^{-1}a_{p-t,j} + \sum_{l=p-t+1}^p d_l^{-1}a_{lj}a_{l,p-t}  \right| ,
		\eea
		which implies that
		\begin{align}
		&d_{p-t}^{-1} \sum_{j=1}^{p-t-k-1} |a_{p-t,j}| \nonumber\\
		&\le \gamma(k) + \sum_{j=1}^{p-t-k-1} \sum_{l=p-t+1}^p d_l^{-1}|a_{lj}a_{l,p-t}| \nonumber \\
		&\le \gamma(k) + \epsilon_0^{-2}  \sum_{l=p-t+1}^p d_l^{-1}\sum_{j=1}^{p-t-k-1} |a_{lj}| \nonumber\\
		&= \gamma(k) + \epsilon_0^{-2} \sum_{t_1=0}^{t-1} d_{p-t_1}^{-1}\sum_{j=1}^{p-t_1-(k+ t-t_1)-1} |a_{p-t_1,j}| \nonumber\\
		\begin{split}\label{induc_gammak}
		&\le \gamma(k) + \epsilon_0^{-2}\gamma(k+t) \\
		&+ \epsilon_0^{-2} \sum_{t_1=1}^{t-1} \left( \gamma(k+t-t_1) + \epsilon_0^{-2}\sum_{m=1}^{t_1}(1+\epsilon_0^{-2})^{m-1}\gamma(k+t-t_1+m)  \right).
		\end{split}
		\end{align}
		In \eqref{induc_gammak}, one can check that the coefficient of $\gamma(k+t-t')$ is
		\bea
		\epsilon_0^{-2}+ \epsilon_0^{-4} \sum_{m=1}^{t-t' -1}(1+\epsilon_0^{-2})^{m-1} &=& \epsilon_0^{-2}(1+\epsilon_0^{-2})^{t- t'-1}
		\eea
		for $0\le t3' \le t-1$, and the coefficient of $\gamma(k)$ is $1$. Thus,
		\bea
		d_{p-t}^{-1} \sum_{j=1}^{p-t-k-1} |a_{p-t,j}| &\le& \gamma(k) + \epsilon_0^{-2} \sum_{m=1}^{t} (1+\epsilon_0^{-2})^{m-1} \gamma(k+m). 
		\eea
		This completes the proof by induction. \quad $\blacksquare$
\end{proof}

\subsection{Proof of the Minimax Lower Bounds: Theorem \ref{main_pre_spec_LB} and Theorem \ref{main_pre_linf_LB}}
\begin{proof}[Proof of Theorem \ref{main_pre_spec_LB}.]
	We follow closely the line of a proof in Cai et al. (2010)\nocite{cai2010optimal}. 
	Consider the polynomially decreasing case, $\gamma(k)= C k^{-\alpha}$, first. 
	Two parameter classes are considered depending on the relation between $p$ and $n$. 
	For $\exp(n^{1/(2\alpha+1)}) \ge p$ case, we show that
	\bean\label{spec_LB_1}
	\inf_{\what{\Omega}} \sup_{\Omega\in \cU_{11}} \bbE_{0n} \|\what{\Omega} - \Omega \|&\gtrsim& \min \left( n^{-\alpha/(2\alpha+1)}, \sqrt{\frac{p}{n}}\, \right) ,
	\eean
	and for $\exp(n^{1/(2\alpha+1)}) \le p$ case, we show that
	\bean\label{spec_LB_2}
	\inf_{\what{\Omega}} \sup_{\Omega\in \cU_{12}} \bbE_{0n} \|\what{\Omega} - \Omega \|&\gtrsim&  \left(\frac{\log p}{n}\right)^{1/2}
	\eean
	for some $\cU_{11} \cup \cU_{12} \subset \cU(\epsilon_0, \gamma)$. 
	
	Consider $\exp(n^{1/(2\alpha+1)}) \ge p$ case first. Without loss of generality, we assume $k= \min (n^{1/(2\alpha+1)}, p)$ is an even number, and define a class of precision matrices
	\bea
	\cU_{11} &:=& \bigg\{\Omega(\theta) \in \bbR^{p\times p} : \Omega(\theta)= (I_p - A(\theta))^T (I_p- A(\theta)), \\
	&& \quad\quad A(\theta) =-\tau a \sum_{m=1}^{k/2}\theta_m B(m,k), \theta=(\theta_m, 1\leq m \leq k/2)\in \{0,1\}^{k/2}   \bigg\}
	\eea
	where $B(m,k) := (b_{ij} = I(i=m+1,\ldots, k \text{ and } j=m), 1\leq i, j \leq p)$ is a $p\times p$ matrix and $a:=(nk)^{-1/2}$. If we choose sufficiently small constant $\tau>0$, it is easy to check that for any $\Omega(\theta)\in \cU_{11}$, $\epsilon_0 \le \lambda_{\min}(\Omega(\theta))\le \lambda_{\max}(\Omega(\theta))\le \epsilon_0^{-1}$ and $\|A(\theta) - B_{k_1}(A(\theta))\|_\infty \le C {k_1}^{-\alpha} $ for any $k_1 >0$, 
	so that $\cU_{11}\subset \cU(\epsilon_0, \gamma)$ for all sufficiently large $n$.
	
	We use the Assouad's lemma
	\bea
	\inf_{\what{\Omega}} \sup_{\Omega(\theta)\in \cU_{11}} 2 \bbE_{\theta} \| \what{\Omega} - \Omega(\theta)\| &\ge& \min_{H(\theta,\theta')\ge 1} \frac{\|\Omega(\theta)-\Omega(\theta')\|}{H(\theta,\theta')} \cdot \frac{k/2}{2} \cdot \min_{H(\theta,\theta')=1} \| \bbP_\theta \wedge \bbP_{\theta'} \|
	\eea
	where $H(\theta,\theta') := \sum_{m=1}^{k/2} |\theta_m-\theta_m'|, \|\bbP_\theta \wedge \bbP_{\theta'} \| := \int p_{\theta}\wedge p_{\theta'} d\mu$ and $p_\theta$ is a density function of observation $\bfX_n$ which follows  $N(0, \Omega(\theta)^{-1})$. 
	If we show that
	\bean\label{Assouad_omegaH}
	\min_{H(\theta,\theta')\ge 1} \frac{\|\Omega(\theta)-\Omega(\theta')\|}{H(\theta,\theta')} &\gtrsim& a
	\eean
	and
	\bean\label{Assouad_PP}
	\min_{H(\theta,\theta')=1} \| \bbP_\theta \wedge \bbP_{\theta'} \| &\ge& c
	\eean
	for some constant $c>0$, it will complete the proof.	
	To show \eqref{Assouad_omegaH}, define a $p$-dimensional vector $v:=(I(k/2 \le i\le k), 1\leq i \leq p)$. Then,
	\bea
	\|\Omega(\theta)-\Omega(\theta')\| &\ge& \frac{\|(\Omega(\theta)- \Omega(\theta'))v\|_2}{\|v\|_2} \\
	&\ge& \frac{\sqrt{(k/2\cdot \tau a)^2 H(\theta,\theta')}}{\sqrt{k/2}} \\
	&=& \sqrt{\frac{k/2}{H(\theta,\theta')}} \cdot \tau a H(\theta,\theta') \\
	&\ge& \tau a H(\theta,\theta').
	\eea
	Thus, we have shown the first part. To show \eqref{Assouad_PP}, note that
	\bea
	\| \bbP_\theta \wedge \bbP_{\theta'} \| &=& 1 - \frac{1}{2} \|\bbP_\theta  - \bbP_{\theta'}\|_1.
	\eea
	
	Thus, it suffices to show that $\|\bbP_\theta  - \bbP_{\theta'}\|_1^2 \le 1/2$. Also note that
	\bea
	\|\bbP_\theta  - \bbP_{\theta'}\|_1 &\le& 2 K(\bbP_{\theta'} \mid \bbP_\theta ) \\
	&=& n \left[ tr(\Omega(\theta')^{-1}\Omega(\theta)) - \log \det (\Omega(\theta')^{-1}\Omega(\theta)) -p  \right] \\
	&=& n \left[tr(\Omega(\theta')^{-1} D_1) - \log \det (\Omega(\theta')^{-1} D_1 + I_p) \right] \\
	&=& n \left[tr(\Omega(\theta')^{-1/2} D_1 \Omega(\theta')^{-1/2}) - \log \det (\Omega(\theta')^{-1/2} D_1 \Omega(\theta')^{-1/2} + I_p) \right]
	\eea
	where $K(\bbP_{\theta'} \mid \bbP_\theta ) := \int \log(\frac{dP_{\theta'}}{dP_\theta}) dP_{\theta'}$ is the Kullback-Leibler divergence and $D_1 := \Omega(\theta) - \Omega(\theta')$. 
	Let $\Omega(\theta')^{-1} = U VU^T$ be the diagonalization of $\Omega(\theta')^{-1}$. $U$ is a orthogonal matrix whose columns are the eigenvectors of $\Omega(\theta')^{-1}$, and $V$ is a diagonal matrix whose $i$th diagonal element is the eigenvalue of $\Omega(\theta')^{-1}$ corresponding to the $i$th column of $U$. It is easy to check that
	\bea
	\| \Omega(\theta')^{-1/2} D_1 \Omega(\theta')^{-1/2}\|_F^2 
	&=& \|UV^{1/2}U^T D_1 UV^{1/2}U^T\|_F^2 \\
	&=& \|V^{1/2}U^T D_1 UV^{1/2}\|_F^2 \\
	&\le& \|V\|^2 \|U^TD_1 U\|_F^2 \\
	&=& \|\Omega(\theta')^{-1}\|^2 \|D_1\|_F^2 \\
	&\le& C k (\tau a)^2
	\eea
	for some constant $C>0$. 
	Since $\Omega(\theta')^{-1/2} D_1 \Omega(\theta')^{-1/2} + I_p$ is a positive definite matrix and $\| \Omega(\theta')^{-1/2} D_1 \Omega(\theta')^{-1/2}\|_F^2$ is small,  
	\bea
	\|\bbP_\theta  - \bbP_{\theta'}\|_1 &\le& n R_n
	\eea
	where $R_n \le C\|\Omega(\theta')^{-1/2} D_1 \Omega(\theta')^{-1/2}\|_F^2$ for some constant $C>0$,  by Lemma A.7 in Lee and Lee (2016). 	
	Thus, we have $\|\bbP_\theta -\bbP_{\theta'}\|_1 \le 1/2$ for some small $\tau>0$ because $nk a^2 =1$.
	
	Now consider $\exp(n^{1/(2\alpha+1)}) \le p$ case.
	To show \eqref{spec_LB_2}, define a class of diagonal precision matrices
	\bea
	\cU_{12} &:=& \left\{\Omega_m \in \bbR^{p\times p} : \Omega_m = I_p +\tau \left(\frac{\log p}{n}\right)^{1/2} \Big(I(i=j=m) \Big),~0\le m\le p  \right\}
	\eea
	for some small $\tau>0$. Since $p \le \exp(cn)$ for some constant $c>0$, $\cU_{12}\subset \cU(\epsilon_0, \gamma)$ holds trivially. Let $r_{\min} := \inf_{1\le m\le p} \|\Omega_0 - \Omega_m \|$. 
	We use the Le Cam's lemma (Le Cam, 1973)\nocite{lecam1973convergence} 
	\bea
	\inf_{\what{\Omega}} \sup_{\Omega_m \in \cU_{12}} \bbE_m \|\what{\Omega}-\Omega_m\| &\ge& \frac{1}{2}\cdot r_{\min} \cdot \| \bbP_0 \wedge \bar{\bbP} \|
	\eea
	where $\bar{\bbP} := p^{-1} \sum_{m=1}^{p} \bbP_m$ and $\bbP_m$ is the distribution function of $N(0, \Omega_m^{-1})$ with observation $\bfX_n$. Note that $r_{\min} = \tau (\log p/n)^{1/2}$.
	We only need to show that $\| \bbP_0 \wedge \bar{\bbP} \| \ge c$ for some constant $c>0$. 
	By the same argument with Cai et al. (2010, page 2129)\nocite{cai2010optimal}, it suffices to show that 
	\bean\label{mixden_zero}
	\int \frac{(p^{-1} \sum_{m=1}^{p}f_m)^2}{f_0} d\mu -1 &\lra& 0,
	\eean
		as $n\to\infty$ where $f_m$ is the density function of $\bbP_m$ with respect to a $\sigma$-finite measure $\mu$. 
	Note that
	\bea
	\int \frac{(p^{-1} \sum_{m=1}^{p}f_m)^2}{f_0} d\mu -1 
	&=& \frac{1}{p^2} \sum_{m=1}^{p} \int \frac{f_m^2}{f_0}d\mu + \frac{1}{p^2}\sum_{m\neq j} \int \frac{f_m f_j}{f_0} d\mu -1
	\eea
	and $\int f_m f_j/f_0 d\mu =1$ for any $m\neq j$. 
		Also note that 
	\bea
	\int \frac{f_m^2}{f_0} d\mu &=& (1+b)^{n/2} \left(1- \frac{b}{1+2b} \right)^{n/2} \\
	&\le& e^{nb^2/(1+2b)} \\
	&\le& e^{nb^2} \,\,=\,\, e^{\tau^2 \log p }
	\eea
		where $b := \tau (\log p/n)^{1/2}$. Thus, \eqref{mixden_zero} holds for some small $\tau>0$.
	It completes the proof for the case of polynomially decreasing $\gamma(k)$.
	
	For the case of exponentially decreasing $\gamma(k)= Ce^{-\beta k}$, consider $k= \min (\log n, p)$ for $\cU_{11}$ instead of $k=\min (n^{1/(2\alpha+1)}, p)$. 
	Then,  similar arguments for the lower bounds of $\cU_{11}$ and $\cU_{12}$ give the desired result.
	
	For the exact banding $\gamma(k)$, consider $\cU_{11}$ with $k=k_0$ and $a=(\log p/n)^{1/2}$, then it completes the proof.
	\quad\quad $\blacksquare$ 
\end{proof}

\begin{proof}[Proof of Theorem \ref{main_pre_linf_LB}.]
	We follow closely the line of a proof in Cai and Zhou (2012a)\nocite{cai2012minimax}. Consider the polynomially decreasing case, $\gamma(k)=Ck^{-\alpha}$, first. 
	Two parameter classes are considered depending on the relation between $p$ and $n$. For $\exp(n^{1/(2\alpha+2)}) \ge p$ case, we show that 
	\bean\label{linf_LB1}
	\inf_{\what{\Omega}} \sup_{\Omega\in \cG_{11}} \bbE_{0n}\|\what{\Omega}-\Omega\|_\infty &\gtrsim& \min \left( n^{-\alpha/(2\alpha+2)}, \frac{p}{\sqrt{n}} \right),
	\eean
	and for $\exp(n^{1/(2\alpha+2)}) \le p$ case, we show that 
	\bean\label{linf_LB2}
	\inf_{\what{\Omega}} \sup_{\Omega\in \cG_{12}} \bbE_{0n}\|\what{\Omega}-\Omega\|_\infty &\gtrsim& \left( \frac{ \log p}{n} \right)^{\alpha/(2\alpha+1)}
	\eean
	for some $\cG_{11} \cup \cG_{12}\subset \cU(\epsilon_0,\gamma)$.
	
	Consider $\exp(n^{1/(2\alpha+2)}) \ge p$ case first.
	Define a class of precision matrices 
	\bea
	\cG_{11} &:=& \bigg\{\Omega(\theta) \in \bbR^{p\times p}: \Omega(\theta)= (I_p-A(\theta))^T (I_p -A(\theta)),\\
	&& \quad\quad A(\theta)=-\tau a \sum_{s=2}^{k}\theta_{s-1} G_s, \theta=(\theta_{s})\in \{0,1\}^{k-1}  \bigg\}
	\eea
	where $G_s := (I(i=s, j=1))$ is a $p\times p$ matrix and $a := n^{-1/2}$ and $k:= \min(n^{1/(2\alpha+2)} ,p)$. It is easy to show that $\cG_{11}\subset \cU(\epsilon_0,\gamma)$ for some small constant $\tau>0$ and all sufficiently large $n$. 
	
	We use the Assouad's lemma,
	\bea
	\inf_{\what{\Omega}} \sup_{\Omega(\theta)\in \cG_{11}} 2\bbE_\theta \|\what{\Omega} - \Omega(\theta)\|_\infty &\ge&
	\min_{H(\theta,\theta')\ge 1} \frac{\|\Omega(\theta)-\Omega(\theta')\|_\infty}{H(\theta,\theta')} \cdot \frac{k-1}{2} \cdot \min_{H(\theta,\theta')=1} \| \bbP_\theta \wedge \bbP_{\theta'}\|.
	\eea
	It is easy to see that 
	\bea
	\min_{H(\theta,\theta')\ge 1} \frac{\|\Omega(\theta)-\Omega(\theta')\|_\infty}{H(\theta,\theta')} &\ge&\tau a.
	\eea
	To show $\min_{H(\theta,\theta')=1} \| \bbP_\theta \wedge \bbP_{\theta'}\| \ge c$ for some $c>0$, it suffices to prove that $\|\bbP_\theta - \bbP_{\theta'}\|_1 \le 1$. Note that
	\bea
	\|\bbP_\theta - \bbP_{\theta'}\|_1^2 &\le& 2K(\bbP_{\theta'} \mid \bbP_\theta) \\
	&\le& C n \|\Omega(\theta')^{-1/2} D_1 \Omega(\theta')^{-1/2}\|_F^2
	\eea
	for some constant $C>0$ where $D_1:= \Omega(\theta)-\Omega(\theta')$. By the same argument used in the proof of Theorem \ref{main_pre_spec_LB}, one can show that $\|\bbP_\theta - \bbP_{\theta'}\|_1^2 \le C' n (\tau a)^2$ for some constant $C'>0$, and it is smaller than 1 for some small constant $\tau>0$. Thus, we have proved the \eqref{linf_LB1} part.
	
	Now consider $\exp(n^{1/(2\alpha+2)}) \le p$ case.
	To show \eqref{linf_LB2} part, define a class of precision matrices 
	\bea
	\cG_{12} &:=& \left\{ \Omega_m \in \bbR^{p\times p}: \Omega_m =(I_p- A_m)^T (I_p-A_m), \, A_m = -\tau B_m \left( \frac{\log p}{nk} \right)^{1/2}  , \, 1\le m\le m_* \right\}
	\eea
	where $B_m:= (I(m+1\le i\le m+k-1,\, j=m) )$ is a $p\times p$ matrix, $m_* = p/k -1$ and $k = (n/\log p)^{1/(2\alpha+1)}$. Without loss of generality, we assume that $p$ can be divided by $k$. By the definition of $\cG_{12}$, tedious calculations yield that $\cG_{12}\subset \cU(\epsilon_0,\gamma)$.
	
	Let $\Omega_0 = I_p$ and $\bbP_m$ be the distribution function of $N(0,\Omega_m^{-1})$ with observation $\bfX_n$.
	It is easy to check that for any $0\le m\neq m' \le m_*$,
	\bea
	\|\Omega_m -\Omega_{m'}\|_\infty &\ge& \tau \left(\frac{k\log p}{n} \right)^{1/2} \,\,=\,\, \tau \left(\frac{\log p}{n} \right)^{\alpha/(2\alpha+1)}
	\eea
	by the definition of $\cG_{12}$ and $k$. Since $k^2 \le p$, for any $1\le m\le m_*$, 
	\bea
	K(\bbP_m\mid \bbP_0)  &\le& Cn\|\Omega_{m'}^{-1/2}D_1 \Omega_{m'}^{-1/2}\|_F^2 \\
	&\le& C'\tau^2 \log p \\
	&\le& c \log m_* 
	\eea
	for some constants $C,C'>0, 0<c<1/8$ and small $\tau>0$, which implies that for any $1\le m\le m_*$, 
	\bea
	\frac{1}{m_*} \sum_{m=1}^{m_*} K(\bbP_m \mid \bbP_0) &\le& c \log m_*
	\eea
	for some $0<c<1/8$, so we can use Fano's lemma,
	\bea
	\inf_{\what{\Omega}} \sup_{\Omega_m\in \cG_{12}} \bbE_m\| \what{\Omega} - \Omega_{m}\|_\infty &\ge& \min_{0\le m\neq m' \le m_*} \frac{\|\Omega_m -\Omega_{m'}\|_\infty}{4} \cdot \frac{\sqrt{m_*}}{1+\sqrt{m_*}} \cdot \left(1-2c- \sqrt{\frac{2c}{\log m_*}} \right).
	\eea
	It completes the proof.
	For more details about Fano's lemma, see Tsybakov (2008)\nocite{tsybakov2008introduction}. 
	
	For the case of exponentially decreasing $\gamma(k)= Ce^{-\beta k}$, consider $k= \min ( [\log n \cdot \log p]^{1/2}, p)$ for $\cG_{11}$ instead of $k=\min (n^{1/(2\alpha+2)}, p)$. Then, similar arguments for the lower bound of $\cG_{11}$ give the desired result. 
	
	For the exact banding $\gamma(k)$, consider $\cG_{11}$ with $k=k_0$ and $a=(\log p/n)^{1/2}$, then it completes the proof.
	\quad\quad $\blacksquare$
\end{proof}

\subsection{Proof of the P-loss Convergence Rates: Theorem \ref{main_pre_spec} and \ref{main_pre_linf}}
Lemma \ref{Nnset}-\ref{Dinv_subG} are used to prove the main theorems.
\begin{lemma}\label{Nnset}
	Let $\bfX_n \overset{iid}{\sim} N_p(0, \Omega_{0,n}^{-1})$ with $\Omega_{0,n} \in \cU(\epsilon_0,\gamma)$,
	\bea
	N_{1n} &:=& \left\{ \bfX_n: \max_j \big\|\what{\V} (Z_{j+1}^{(k+1)}) \big\| \le C_1  \right\},\\
	N_{2n} &:=& \left\{ \bfX_n: \max_j \big\|\what{\V}^{-1}(Z_{j+1}^{(k+1)})\big\| \le C_2  \right\},\\ 
	N_{3n} &:=& \left\{ \bfX_n: \max_j \big\|\what{\V}(Z_{j+1}^{(k+1)})- {\V}(Z_{j+1}^{(k+1)})\big\| \le \sqrt{ C_3(k+\log (n\vee p))/n} \right\},\\
	N_{4n} &:=& \left\{ \bfX_n: \max_j \big\|\what{\V}^{-1}(Z_{j+1}^{(k+1)})- {\V}^{-1}(Z_{j+1}^{(k+1)})\big\| \le \sqrt{ C_4(k+\log (n\vee p))/n} \right\},
	\eea
	where $C_1 = \epsilon_0^{-1}(2+  \sqrt{(k+1)/n})^2, C_2 = 4\epsilon_0^{-1}(1- \sqrt{(k+1)/n})^{-2}, C_4=C_3 C_2^2 \epsilon_0^{-2}$ and $N_n := \bigcap_{j=1}^4 N_{jn}$. If $k+\log p = o(n)$, then for any large constant $C_3$, there exist a positive constant $C_5$ such that
	\bea
	\bbP_{0n} \big( \bfX_n \in N_n^c \big) &\le& 6 p e^{-n(1-\sqrt{(k+1)/n})^2/8} + 4 \cdot 5^k e^{- C_3 C_5 \epsilon_0^{2} (\log (n\vee p) +k)} ,
	\eea
	for all sufficiently large $n$. Here, $C_5$ does not depend on $C_3$.
\end{lemma}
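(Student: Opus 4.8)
The plan is to control each of the four events $N_{1n},\dots,N_{4n}$ separately by reducing everything to standard Gaussian concentration for the sample covariance matrices of the subvectors $Z_{j+1}^{(k+1)}$, and then to take a union bound over the at most $p$ relevant indices $j$. The key reduction is this: since $\Omega_{0,n}\in\cU(\epsilon_0,\gamma)$, every principal submatrix $\Sigma_j:=\V(Z_{j+1}^{(k+1)})$ of $\Sigma_{0,n}=\Omega_{0,n}^{-1}$ satisfies $\epsilon_0\le\lambda_{\min}(\Sigma_j)\le\lambda_{\max}(\Sigma_j)\le\epsilon_0^{-1}$, and we may write $Z_{i,j+1}^{(k+1)}=\Sigma_j^{1/2}g_i$ with $g_1,\dots,g_n\overset{iid}{\sim}N_{m_j}(0,I_{m_j})$ and $m_j:=\min(j,k+1)\le k+1$. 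Hence $\what\V(Z_{j+1}^{(k+1)})=\Sigma_j^{1/2}\hat G_j\Sigma_j^{1/2}$ with $\hat G_j=n^{-1}W_jW_j^\top$ and $W_j=[\,g_1\,\cdots\,g_n\,]$ an $m_j\times n$ standard Gaussian matrix, so all four quantities become multiplicative or additive perturbations of $\hat G_j$ by a matrix whose operator norm lies between $\epsilon_0$ and $\epsilon_0^{-1}$.

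For $N_{1n}$ and $N_{2n}$ I would invoke the standard non-asymptotic bound on the extreme singular values of a Gaussian matrix: for any $t>0$, with probability at least $1-2e^{-t^2/2}$,
$\sqrt n-\sqrt{m_j}-t\le s_{\min}(W_j)\le s_{\max}(W_j)\le\sqrt n+\sqrt{m_j}+t$. Taking $t=(\sqrt n-\sqrt{k+1})/2$ (positive for large $n$ since $k=o(n)$), and using $\|\what\V(Z_{j+1}^{(k+1)})\|\le\epsilon_0^{-1}s_{\max}(W_j)^2/n$ together with $\|\what\V^{-1}(Z_{j+1}^{(k+1)})\|\le\epsilon_0^{-1}n/s_{\min}(W_j)^2$, a direct computation gives exactly the constants $C_1=\epsilon_0^{-1}(2+\sqrt{(k+1)/n})^2$ and $C_2=4\epsilon_0^{-1}(1-\sqrt{(k+1)/n})^{-2}$ on the corresponding event. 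A union bound over $j$ then produces the term $6p\,e^{-n(1-\sqrt{(k+1)/n})^2/8}$, the exponent being $t^2/2=(\sqrt n-\sqrt{k+1})^2/8$.

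For $N_{3n}$ I would deliberately avoid a ready-made deviation bound for $\hat G_j-I$ and instead use a net-plus-Bernstein argument, which is where the $5^k$ prefactor enters. Fix $j$ and a $\tfrac12$-net $\mathcal N_j$ of the unit sphere in $\bbR^{m_j}$, of cardinality at most $5^{m_j}\le 5^{k+1}$, so that $\|\hat G_j-I\|\le c_0\max_{v\in\mathcal N_j}|v^\top(\hat G_j-I)v|$ for an absolute constant $c_0$. For each fixed unit vector $v$, $v^\top(\hat G_j-I)v=n^{-1}\sum_{i=1}^n\big((v^\top g_i)^2-1\big)$ is an average of i.i.d.\ centered $\chi^2_1$ variables, so Bernstein's inequality gives $\bbP\big(|v^\top(\hat G_j-I)v|\ge s\big)\le 2e^{-c_1 n s^2}$ for all $s\le 1$; under $k+\log p=o(n)$ the scale of interest $s\asymp\epsilon_0\sqrt{C_3(k+\log(n\vee p))/n}$ is indeed $o(1)$. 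Choosing $s$ so that $\epsilon_0^{-1}c_0 s=\sqrt{C_3(k+\log(n\vee p))/n}$ and using $\|\what\V(Z_{j+1}^{(k+1)})-\V(Z_{j+1}^{(k+1)})\|\le\epsilon_0^{-1}\|\hat G_j-I\|$, a union bound over $v\in\mathcal N_j$ and over $j$ yields a bound of the form $5^{k}\,p\,\exp(-c_1'C_3\epsilon_0^2(k+\log(n\vee p)))$; since $p\le n\vee p=e^{\log(n\vee p)}$, once $C_3$ is large the factor $p$ (and the extra factor $5$ from $5^{k+1}$) is absorbed into the exponential, leaving $4\cdot 5^k e^{-C_3C_5\epsilon_0^2(\log(n\vee p)+k)}$ with $C_5$ an absolute constant independent of $C_3$.

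Finally, $N_{4n}$ needs no new probabilistic input: on $N_{2n}\cap N_{3n}$ the resolvent identity $\what\V^{-1}-\V^{-1}=\what\V^{-1}(\V-\what\V)\V^{-1}$, together with $\|\what\V^{-1}(Z_{j+1}^{(k+1)})\|\le C_2$, $\|\V^{-1}(Z_{j+1}^{(k+1)})\|\le\epsilon_0^{-1}$ and the $N_{3n}$-bound, gives $\|\what\V^{-1}(Z_{j+1}^{(k+1)})-\V^{-1}(Z_{j+1}^{(k+1)})\|\le C_2\epsilon_0^{-1}\sqrt{C_3(k+\log(n\vee p))/n}=\sqrt{C_4(k+\log(n\vee p))/n}$ since $C_4=C_3C_2^2\epsilon_0^{-2}$; that is, $N_{2n}\cap N_{3n}\subseteq N_{4n}$. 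Hence $N_n^c\subseteq N_{1n}^c\cup N_{2n}^c\cup N_{3n}^c$, and summing the two bounds above (enlarging constants as needed) gives the claim for all sufficiently large $n$. I expect the only genuinely delicate point to be the bookkeeping in the net/Bernstein step for $N_{3n}$: choosing the scale $s$ so that $C_5$ comes out independent of $C_3$ while the polynomial-in-$p$ and $5^{k+1}$ prefactors are swallowed by the exponential precisely when $C_3$ is taken large; the rest is routine matrix algebra and union bounds.
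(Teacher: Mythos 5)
Your proof is correct and follows essentially the same route as the paper: the paper's own proof simply cites Lemmas A.4 and A.5 of Lee and Lee (2017) for the deviation bounds on $\what{\V}(Z_{j+1}^{(k+1)})$ and $\what{\V}^{-1}(Z_{j+1}^{(k+1)})$ and then, exactly as you do, reduces $N_{4n}^c\cap N_{2n}$ to the $N_{3n}^c$ event via the resolvent identity (with $C_4=C_3C_2^2\epsilon_0^{-2}$ chosen so that $C_2^{-1}\epsilon_0\sqrt{C_4}=\sqrt{C_3}$). Your contribution is to unfold those cited lemmas from first principles — the Davidson--Szarek-type singular-value bound for $N_{1n},N_{2n}$ (where, to recover $C_1$ exactly, one would take $t=\sqrt n$ rather than $t=(\sqrt n-\sqrt{k+1})/2$, though the smaller $t$ also works since it yields an even tighter constant) and the $\tfrac12$-net plus Bernstein argument for $N_{3n}$ (which accounts for the $5^{k}$ prefactor and the absorption of the polynomial factor $p$ into the exponential once $C_3$ is large) — so the argument is the same but made self-contained.
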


\begin{proof}[Proof of Lemma \ref{Nnset}.]
	We will show that for any large constant $C_3$,
	\bean
	\bbP_{0n}\big( \bfX_n \in N_{1n}^c \big) &\le& 2p e^{-n/2}, \label{n1term} \\
	\bbP_{0n}\big( \bfX_n \in N_{2n}^c \big) &\le& 2p e^{-n(1-\sqrt{(k+1)/n})^2/8}, \label{n2term} \\
	\bbP_{0n}\big( \bfX_n \in N_{3n}^c \big) &\le& 2\cdot 5^k e^{-C_3 C_5 \epsilon_0^{2} (k+\log (n\vee p))}, \label{n3term} \\
	\bbP_{0n}\big( \bfX_n \in N_{4n}^c \big) &\le& 2\cdot 5^k e^{-C_3 C_5 \epsilon_0^{2} (k+\log (n\vee p))} + 2p e^{-n(1-\sqrt{(k+1)/n})^2/8},\label{n4term}
	\eean
	for some positive constants $C_4$ and $C_5$. 
	The inequalities \eqref{n1term} and \eqref{n2term} follow from Lemma A.5 in Lee and Lee (2017). Note that for any large constant $C_3>0$,
	\bean\label{N3prob}
	\bbP_{0n}\big( \bfX_n \in N_{3n}^c \big) 
	&\le& p \cdot 5^{k+1} \left( e^{-C_3 C_6 \epsilon_0^{2} (k+ \log (n\vee p)) } + e^{- C_3^{1/2} C_7 \epsilon_0 \sqrt{n(k+ \log (n\vee p))} } \right) 
	\eean
	for all sufficiently large $n$ and some absolute constants $C_6$ and $C_7$ by Lemma A.4 in Lee and Lee (2017). If we take $C_5=C_6/2$, the right hand side (RHS) of \eqref{N3prob} is bounded by 
	$2 \cdot 5^{k} \exp (-C_3 C_5 \epsilon_0^{2}(k+ \log (n\vee p)))$ for any constant $C_3>0$ and all sufficiently large $n$ because $k+\log(n\vee p) = o(n)$.
	Similarly,
	\bea
	&& \bbP_{0n}\big( \bfX_n \in N_{4n}^c \big) \\
	&\le& \bbP_{0n}\big( \bfX_n \in N_{4n}^c \cap N_{2n} \big) + \bbP_{0n}\big( \bfX_n \in N_{2n}^c \big) \\
	&\le& \bbP_{0n}\left( \max_j \big\|\what{\V}(Z_{j+1}^{(k+1)})- {\V}(Z_{j+1}^{(k+1)})\big\| \ge C_2^{-1}\epsilon_0 \sqrt{ C_4 \frac{k+\log (n\vee p)}{n}} \right) + 2p e^{-n(1-\sqrt{(k+1)/n})^2/8}  \\
	&\le& 2\cdot 5^k e^{-C_3 C_5 \epsilon_0^{2} (k+ \log (n\vee p))}+ 2p e^{-n(1-\sqrt{(k+1)/n})^2/8} 
	\eea
	for $C_4 = C_3 C_2^2 \epsilon_0^{-2}$ and all sufficiently large $n$. Since the inequalities \eqref{n3term} and \eqref{n4term} also hold, this completes the proof. \quad $\blacksquare$
\end{proof}

\begin{lemma}\label{freq_as}
	Consider model \eqref{prec_model} with $\Omega_{0,n} \in \cU(\epsilon_0,\gamma)$ and $\sum_{m=1}^\infty \gamma(m)< \infty$. Define $\what{\Omega}_{nk} := (I_p - \what{A}_{nk})^T \what{D}_{nk}^{-1} (I_p - \what{A}_{nk})$, $\what{D}_{nk}:= diag(\what{d}_{jk})$ and $\what{A}_{nk} := ( \what{a}_{jl}^{(k)} )$, where $\what{a}_{jl}^{(k)} =0$ if $1 \le j \le l \le p$. If  $\Omega_{0,n} \in \cU(\epsilon_0,\gamma)$ and $k^{3/2}(k + \log (n\vee p)) = O(n)$, then 
	\bea
	\bbE_{0n} \left[  \|\what\Omega_{nk}-{\Omega}_{0,n}\|  I(\bfX_n \in N_n)\right] &\lesssim&  k^{3/4} \left[\left(\frac{k+\log  (n\vee p)}{n} \right)^{1/2} + \gamma(k)\right] ,
	\eea
	and if $k(k + \log (n\vee p)) = O(n)$, then
	\bea
	\bbE_{0n} \left[  \|\what\Omega_{nk}-{\Omega}_{0,n}\|_\infty  I(\bfX_n \in N_n)\right] &\lesssim&  k \left[ \left(\frac{k+\log (n\vee p)}{n}\right)^{1/2} + \gamma(k)\right],
	\eea
	where the set $N_n$ is defined at Lemma \ref{Nnset}.
\end{lemma}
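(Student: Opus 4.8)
\quad
The plan is to compare $\what\Omega_{nk}$ not with $\Omega_{0,n}$ directly but with the \emph{population} $k$-truncated precision matrix. Write $\Omega_{0,n}=(I_p-A_{0,n})^T D_{0,n}^{-1}(I_p-A_{0,n})$ for the MCD of the truth, and under $Y\sim N_p(0,\Sigma_{0,n})$ put $a_{0,j}^{(k)}:=\V^{-1}(Z_j^{(k)})\C(Z_j^{(k)},Y_j)$, $d_{0,j}^{(k)}:=\V(Y_j)-\C(Y_j,Z_j^{(k)})\V^{-1}(Z_j^{(k)})\C(Z_j^{(k)},Y_j)$, $A_{0,n}^{(k)}:=(a_{0,jl}^{(k)})$, $D_{0,n}^{(k)}:=\mathrm{diag}(d_{0,j}^{(k)})$, $\Omega_{0,n}^{(k)}:=(I_p-A_{0,n}^{(k)})^T(D_{0,n}^{(k)})^{-1}(I_p-A_{0,n}^{(k)})$. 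By the triangle inequality it suffices to bound the \emph{estimation} part $\|\what\Omega_{nk}-\Omega_{0,n}^{(k)}\|$ and the \emph{bias} part $\|\Omega_{0,n}^{(k)}-\Omega_{0,n}\|$, and the analogues in $\|\cdot\|_\infty$. Both are treated by the same quadratic-form expansion: with $G:=I_p-A_{0,n}^{(k)}$, $\Delta_A:=\what A_{nk}-A_{0,n}^{(k)}$, $\Delta_{D^{-1}}:=\what D_{nk}^{-1}-(D_{0,n}^{(k)})^{-1}$,
\[
\what\Omega_{nk}-\Omega_{0,n}^{(k)}=G^T\Delta_{D^{-1}}G-G^T\what D_{nk}^{-1}\Delta_A-\Delta_A^T\what D_{nk}^{-1}G+\Delta_A^T\what D_{nk}^{-1}\Delta_A,
\]
and the same identity for $\Omega_{0,n}^{(k)}-\Omega_{0,n}$ with $\delta_A:=A_{0,n}-A_{0,n}^{(k)}$, $\delta_{D^{-1}}:=D_{0,n}^{-1}-(D_{0,n}^{(k)})^{-1}$, $D_{0,n}^{-1}$ replacing $\Delta_A,\Delta_{D^{-1}},\what D_{nk}^{-1}$.

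First I would record a few a priori bounds. A short population computation, decomposing $\C(Z_j^{(k)},Y_j)=\V(Z_j^{(k)})B_{k,j}(a_{0,j})+\sum_{l<j-k}a_{0,jl}\C(Z_j^{(k)},Y_l)$, inverting, and using $\sum_{l<j-k}|a_{0,jl}|\le\gamma(k)$ together with $\|\V^{-1}(Z_j^{(k)})\|\le\epsilon_0^{-1}$ and $\|\C(Z_j^{(k)},Y_l)\|_2\le\|\Sigma_{0,n}\|\le\epsilon_0^{-1}$, gives $\|a_{0,j}^{(k)}-B_{k,j}(a_{0,j})\|_2\lesssim\gamma(k)$, and an orthogonal-projection argument gives $0\le d_{0,j}^{(k)}-d_{0,j}\lesssim\gamma(k)$. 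Since $\sum_m\gamma(m)<\infty$ forces $\max_j\|a_{0,j}\|_1=O(1)$ (each $|a_{0,jl}|\le\gamma(j-l-1)$), these estimates yield $\epsilon_0\le d_{0,j}^{(k)}\le\epsilon_0^{-1}$, $\|(D_{0,n}^{(k)})^{-1}\|=\|(D_{0,n}^{(k)})^{-1}\|_\infty\le\epsilon_0^{-1}$ and $\|G\|,\|G\|_1,\|G\|_\infty=O(1)$; on the event $N_n$ of Lemma \ref{Nnset} the analogues hold for the hatted objects, in particular $\|\what D_{nk}^{-1}\|=\|\what D_{nk}^{-1}\|_\infty\le 2\epsilon_0^{-1}$ and $\|I_p-\what A_{nk}\|$ (resp. $\|I_p-\what A_{nk}\|_\infty$) $=O(1)$, provided $\|\Delta_A\|=O(1)$ (resp. $\|\Delta_A\|_\infty=O(1)$) — which is exactly what the hypothesis $k^{3/2}(k+\log(n\vee p))=O(n)$ (resp. $k(k+\log(n\vee p))=O(n)$) supplies.

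For the estimation part, on $N_n$ the rowwise plug-in estimates concentrate, $\|\what a_j^{(k)}-a_{0,j}^{(k)}\|_2\lesssim\sqrt{(k+\log(n\vee p))/n}$ and $|\what d_{jk}^{-1}-(d_{0,j}^{(k)})^{-1}|\lesssim\sqrt{(k+\log(n\vee p))/n}$, where the cross-covariance deviation $\|\what\C(Z_j^{(k)},X_j)-\C(Z_j^{(k)},X_j)\|_2$ is read off the $(k+1)\times(k+1)$ block $\what\V(Z_{j+1}^{(k+1)})$ controlled by $N_{3n}$ and $N_{4n}$. Since $\Delta_A$ is $k$-banded, $\|\Delta_A\|_\infty=\max_j\|\what a_j^{(k)}-a_{0,j}^{(k)}\|_1\lesssim\sqrt k\,\sqrt{(k+\log(n\vee p))/n}$ and (each column having $\le k$ nonzero entries) $\|\Delta_A\|_1\lesssim k\,\sqrt{(k+\log(n\vee p))/n}$, hence $\|\Delta_A\|\le(\|\Delta_A\|_1\|\Delta_A\|_\infty)^{1/2}\lesssim k^{3/4}\sqrt{(k+\log(n\vee p))/n}$, while $\|\Delta_{D^{-1}}\|=\|\Delta_{D^{-1}}\|_\infty\lesssim\sqrt{(k+\log(n\vee p))/n}$. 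Plugging into the expansion, bounding each spectral-norm summand by the product of operator norms of its factors and each matrix-$\ell_\infty$ summand by the product of matrix-$\ell_\infty$ norms (a transpose turning into a $\|\cdot\|_1$), the first-order terms are $\lesssim k^{3/4}\sqrt{(k+\log(n\vee p))/n}$ in spectral norm and $\lesssim k\,\sqrt{(k+\log(n\vee p))/n}$ in matrix-$\ell_\infty$ norm, and the quadratic remainder $\Delta_A^T\what D_{nk}^{-1}\Delta_A$ is of strictly smaller order by the growth hypotheses. For the bias part, the population estimates above give $\|\delta_A\|_\infty\lesssim\sqrt k\,\gamma(k)$, $\|\delta_A\|_1\lesssim k\,\gamma(k)$ (the far part of $A_{0,n}$ contributing $\sum_{m\ge k}\gamma(m)$ to column sums, which is $\lesssim k\gamma(k)$ in each of the three regimes of interest), $\|\delta_{D^{-1}}\|\lesssim\gamma(k)$, and the same expansion gives $\|\Omega_{0,n}^{(k)}-\Omega_{0,n}\|\lesssim(\|\delta_A\|_1\|\delta_A\|_\infty)^{1/2}\lesssim k^{3/4}\gamma(k)$ and $\|\Omega_{0,n}^{(k)}-\Omega_{0,n}\|_\infty\lesssim\|\delta_A\|_1\lesssim k\,\gamma(k)$. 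Adding the two parts and noting that on $N_n$ every displayed inequality holds deterministically, with constants depending only on $\epsilon_0$ and the constants in Lemma \ref{Nnset}, so that $\bbE_{0n}[\|\cdot\|\,I(\bfX_n\in N_n)]$ is at most the on-$N_n$ deterministic bound, finishes the argument.

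I expect the main obstacle to be the norm bookkeeping in the two quadratic-form expansions: obtaining the sharp $k^{3/4}$ (spectral) and $k$ (matrix-$\ell_\infty$) prefactors requires routing the spectral norm of every $k$-banded correction matrix through the interpolation $\|M\|\le(\|M\|_1\|M\|_\infty)^{1/2}$ rather than bounding operator norms of products directly, and it hinges on the a priori boundedness of $\|G\|,\|I_p-\what A_{nk}\|$ together with their $\|\cdot\|_1,\|\cdot\|_\infty$ counterparts, which is precisely where $\sum_m\gamma(m)<\infty$ and the restrictions on $k$ get used. The other delicate point is the population bias lemma, namely showing that $\|a_{0,j}^{(k)}-B_{k,j}(a_{0,j})\|_2$ and $d_{0,j}^{(k)}-d_{0,j}$ are $O(\gamma(k))$, rather than merely $O(\sum_{m\ge k}\gamma(m))$.
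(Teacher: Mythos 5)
Your proposal is correct and follows essentially the same route as the paper: compare $\what\Omega_{nk}$ with the population $k$-truncated precision matrix $\Omega_{0,nk}$, expand the difference around $(I_p-A_{0,nk})^T D_{0,nk}^{-1}(I_p-A_{0,nk})$, bound the rowwise estimates $\|\what a_j^{(k)}-a_{0,j}^{(k)}\|_2$ and $|\what d_{jk}^{-1}-d_{0,jk}^{-1}|$ on $N_n$, interpolate $\|\cdot\|\le(\|\cdot\|_1\|\cdot\|_\infty)^{1/2}$ for the $k$-banded $\Delta_A$, and control the bias via the $\gamma(k)$-decay (the paper's Lemma \ref{diffAbound}). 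Your four-term algebraic identity is a cleaner packaging of the paper's seven-term triangle-inequality expansion, and your $\ell_2$-based bound $|d_{0,jk}-d_{0,j}|\lesssim\gamma(k)$ is mildly sharper than the paper's $\sqrt{k}\gamma(k)$, though neither refinement affects the final rate; one small imprecision is that the quadratic remainder $\Delta_A^T\what D_{nk}^{-1}\Delta_A$ is of the \emph{same} order as the target under the stated growth hypotheses, not strictly smaller, which is exactly why those hypotheses are needed.
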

\begin{proof}[Proof of Lemma \ref{freq_as}.]
	Define $\Omega_{0,nk} := (I_p - A_{0,nk})^T D_{0,nk}^{-1}(I_p - A_{0,nk})$. 
	Note that
	\bean\label{tria}
	\begin{split}
	&\bbE_{0n} \left[  \|\what\Omega_{nk}-{\Omega}_{0,n}\|  I(\bfX_n \in N_n)\right] \\
	&\le \bbE_{0n} \left[  \|\what\Omega_{nk}-{\Omega}_{0,nk}\|  I(\bfX_n \in N_n)\right] + \| \Omega_{0,nk} - \Omega_{0,n}\| \\
	&\le \bbE_{0n} \left[ \|\what{A}_{nk}^T - A_{0,nk}^T \| \cdot \|D_{0,nk}^{-1}\| \cdot\| I_p- A_{0,nk}\|  I(\bfX_n \in N_n)\right] \\
	&+ \bbE_{0n} \left[ \|\what{D}_{nk}^{-1}- D_{0,nk}^{-1}\|\cdot \|I_p- A_{0,nk}^T\|\cdot \|I_p-A_{0,nk}\|  I(\bfX_n \in N_n)\right]\\
	&+ \bbE_{0n} \left[ \|\what{A}_{nk} - A_{0,nk} \|\cdot \|D_{0,nk}^{-1}\| \cdot\|I_p-A_{0,nk}^T\|  I(\bfX_n \in N_n)\right] \\
	&+ \bbE_{0n} \left[  \|I_p-A_{0,nk}^T\|\cdot \|\what{D}_{nk}^{-1}-D_{0,nk}^{-1}\|\cdot \|\what{A}_{nk}- A_{0,nk}\| I(\bfX_n \in N_n)\right] \\
	&+ \bbE_{0n} \left[ \|D_{0,nk}^{-1}\| \cdot\|\what{A}_{nk}^T - A_{0,nk}^T \| \cdot\|\what{A}_{nk} - A_{0,nk}\|  I(\bfX_n \in N_n)\right] \\
	&+ \bbE_{0n} \left[ \|I_p-A_{0,nk}\|\cdot\|\what{A}_{nk}^T - A_{0,nk}^T \| \cdot\| \what{D}_{nk}^{-1} - D_{0,nk}^{-1}\| I(\bfX_n \in N_n)\right] \\
	&+ \bbE_{0n} \left[ \|\what{A}_{nk}^T - A_{0,nk}^T\|\cdot \|\what{D}_{nk}^{-1}-D_{0,nk}^{-1}\| \cdot\|\what{A}_{nk} - A_{0,nk}\| I(\bfX_n \in N_n)\right] \\
	&+ \| \Omega_{0,nk} - \Omega_{0,n}\|
	\end{split}
	\eean
	by the triangle inequality (See page 223 of Bickel and Levina (2008b)). Also note that
	\bea
	\|I_p - A_{0,nk}\|_\infty &\le& 1 +\|A_{0,nk}- A_{0,n}\|_\infty + \|A_{0,n}\|_\infty \\
	&\le& 1 + C (\sqrt{k}\gamma(k) + 1), \\
	\|I_p - A_{0,nk}\|_1 &\le& 1 +\|A_{0,nk}- A_{0,n}\|_\infty + \|A_{0,n}\|_ 1\\
	&\le& 1 + C k\gamma(k) + \sum_{m=1}^\infty \gamma(m),
	\eea
	for some constant $C>0$ by Lemma \ref{diffAbound}, and $\|D_{0,nk}^{-1}\| \le \max_j \| \V^{-1}(Z_{j+1}^{(k+1)}) \| \le \epsilon_0^{-1}$ using the similar argument to \eqref{D0inv_upperbound}. 	
	If we show that, on $(\bfX_n \in N_n)$,
	\bean
	\| \what{A}_{nk} - A_{0,nk}\|_\infty  &\lesssim& \sqrt{k} \left(\frac{k+\log (n\vee p)}{n}\right)^{1/2}, \label{freq_as_A} \\
	\| \what{A}_{nk} - A_{0,nk}\|_1  &\lesssim& k \left(\frac{k+\log (n\vee p)}{n}\right)^{1/2}, \label{freq_as_A1} \\
	\| \what{D}_{nk}^{-1} - D_{0,nk}^{-1}\|_\infty &\lesssim&  \left(\frac{k+\log (n\vee p)}{n}\right)^{1/2}, \label{freq_as_D}
	\eean
	$\| \Omega_{0,nk} - \Omega_{0,n}\| \lesssim k^{3/4}\gamma(k)$ and $\| \Omega_{0,nk} - \Omega_{0,n}\|_\infty \lesssim k\gamma(k)$, the proof is completed. 
	
	To show \eqref{freq_as_A}, note that
	\bea
	\|\what{A}_{nk} - A_{0,nk}\|_\infty &=&  \max_j \| \what{a}_j^{(k)} - a_{0,j}^{(k)}\|_{1} \\
	&\le& \sqrt{k} \max_j \| \what{a}_j^{(k)} - a_{0,j}^{(k)}\|_{2}\\
	&\le& \sqrt{k}\, \bigg\{ \max_j \left\| \V^{-1}(Z_j^{(k)}) \cdot ( \what{\C}(Z_j^{(k)},X_j) - \C(Z_j^{(k)},X_j) )  \right\|_{2} \\
	&& + \,\, \max_j \left\| ( \what{\V}^{-1}(Z_j^{(k)}) - \V^{-1}(Z_j^{(k)})) \what{\C}(Z_j^{(k)},X_j) \right\|_{2} \bigg\}.
	\eea
	The first part of the last line can be bounded above by
	\bea
	&& \sqrt{k} \max_j \left\| \V^{-1}(Z_j^{(k)}) \cdot ( \what{\C}(Z_j^{(k)},X_j) - \C(Z_j^{(k)},X_j) )  \right\|_{2} \\
	&\le& \sqrt{k} \max_j \left\| \V^{-1}(Z_j^{(k)}) \right\| \left\| \what{\C}(Z_j^{(k)},X_j) - \C(Z_j^{(k)},X_j) \right\|_2 \\
	&\le& \sqrt{k} \max_j \left\| \V^{-1}(Z_j^{(k)}) \right\| \left\| \what{\V}(Z_{j+1}^{(k+1)}) - \V(Z_{j+1}^{(k+1)}) \right\| \\
	&\lesssim& \sqrt{k} \left(\frac{k+ \log (n\vee p)}{n} \right)^{1/2} \quad \text{ on } (\bfX_n \in N_n).
	\eea
	The second part can be bounded similarly
	\bea
	&& \sqrt{k} \max_j \left\| ( \what{\V}^{-1}(Z_j^{(k)}) - \V^{-1}(Z_j^{(k)})) \what{\C}(Z_j^{(k)},X_j) \right\|_{2} \\
	&\le& \sqrt{k} \max_j \left\| \what{\V}^{-1}(Z_j^{(k)}) - \V^{-1}(Z_j^{(k)}) \right\| \left\| \what{\V}(Z_{j+1}^{(k+1)})  \right\|  \\
	&\lesssim& \sqrt{k} \left(\frac{k+ \log (n\vee p)}{n} \right)^{1/2} \quad \text{ on } (\bfX_n \in N_n).
	\eea
	By similar arguments, we can show that the inequality \eqref{freq_as_A1} holds:
	\bea
	\| \what{A}_{nk} - A_{0,nk}\|_1 &\le& k \max_j \| \what{a}_j^{(k)} - a_{0,j}^{(k)}\|_{\max} \\
	&\le& k \max_j \| \what{a}_j^{(k)} - a_{0,j}^{(k)}  \|_2 \\
	&\lesssim& k \left(\frac{k+ \log (n\vee p)}{n} \right)^{1/2} \quad \text{ on } (\bfX_n \in N_n).
	\eea
	To show \eqref{freq_as_D}, note that
	\bea
	\| \what{D}_{nk}^{-1} - D_{0,nk}^{-1}\|_\infty &\le& 
	\|\what{D}_{nk}^{-1}\|_\infty \|D_{0,nk}^{-1}\|_\infty \|\what{D}_{nk}-D_{0,nk}\|_\infty  
	\eea
	where $\|\what{D}_{nk}^{-1}\|_\infty  \|D_{0,nk}^{-1}\|_\infty \le C_2 \epsilon_0^{-1} $ on $(\bfX_n \in N_n)$.
	The rest part is easily bounded above as follows:
	\bea
	\|\what{D}_{nk}-D_{0,nk}\|_\infty &=& \max_j | \what{d}_{jk} - d_{0,jk}| \\
	&\le& \max_j \left|\what{\V}(X_j) - {\V}(X_j) \right| \\
	&+& \max_j \left| \what{\C}(X_j, Z_j^{(k)})\cdot\what{a}_j^{(k)}  - {\C}(X_j, Z_j^{(k)})\cdot {a}_j^{(k)} \right|  \\
	&\le& \max_j \left|\what{\V}(X_j) - {\V}(X_j) \right| + \max_j \left| \what{\C}(X_j,Z_j^{(k)}) \cdot \left
	(\what{a}_j^{(k)} - a_j^{(k)} \right) \right| \\
	&+& \max_j \left| \left(\what{\C}(X_j,Z_j^{(k)}) - \C(X_j,Z_j^{(k)}) \right) a_j^{(k)} \right| \\
	&\lesssim& \left(\frac{k + \log (n\vee p)}{n} \right)^{1/2} \quad \text{ on } (\bfX_n \in N_n) .
	\eea
	
	Hence, by \eqref{tria}, we have shown that
	\bea
	 \bbE_{0n} \left[  \|\what\Omega_{nk}-{\Omega}_{0,nk}\|  I(\bfX_n \in N_n)\right] &\lesssim& k^{3/4}  \left(\frac{k +\log (n\vee p)}{n} \right)^{1/2} +  \| \Omega_{0,nk} - \Omega_{0,n}\|
	\eea
	when $k^{3/2}(k+ \log (n\vee p)) = O(n)$, and
	\bea
	 \bbE_{0n} \left[  \|\what\Omega_{nk}-{\Omega}_{0,nk}\|_\infty  I(\bfX_n \in N_n)\right] &\lesssim& k   \left(\frac{k +\log (n\vee p)}{n} \right)^{1/2} +  \| \Omega_{0,nk} - \Omega_{0,n}\|_\infty
	\eea
	when $k(k+\log (n\vee p)) =O(n)$. The conditions $k^{3/2}(k+ \log (n\vee p)) = O(n)$ and $k(k+\log (n\vee p)) =O(n)$	are required due to the term $\bbE_{0n} \left[ \|D_{0,nk}^{-1}\| \cdot\|\what{A}_{nk}^T - A_{0,nk}^T \| \cdot\|\what{A}_{nk} - A_{0,nk}\|  I(\bfX_n \in N_n)\right]$ in \eqref{tria}.
	
	If we show that $\| \Omega_{0,nk} - \Omega_{0,n}\| \lesssim k^{3/4}\gamma(k)$ and $\| \Omega_{0,nk} - \Omega_{0,n}\|_\infty \lesssim k\gamma(k)$, this completes the proof. 
	By Lemma \ref{diffAbound}, we have $\|A_{0,nk} - A_{0,n}\|_\infty \lesssim \sqrt{k}\gamma(k)$ and $\|A_{0,nk} - A_{0,n}\|_1 \lesssim k\gamma(k)$. Note that
	\bea
	\|D_{0,nk} - D_{0,n}\|_\infty &=& \max_j \left| {a_j^{(k)T}} \V(Z_j^{(k)})a_j^{(k)} - a_j^T \V(Z_j) a_j  \right| \\
	&=& \max_j \left| ((0^T, {a_j^{(k)T}}) - a_j^T) \V(Z_j) \left( \binom{0}{a_j^{(k)}} + a_j \right)  \right|  \\
	&\le&  \|A_{0,nk} - A_{0,n}\|_\infty \max_j \left(\|a_j^{(k)}\|_2+\|a_j\|_2\right) \left\| \V(Z_j) \right\| \\
	&\lesssim& \sqrt{k}\gamma(k).
	\eea
	Thus, it is easy to show that $\| \Omega_{0,nk} - \Omega_{0,n}\| \lesssim k^{3/4}\gamma(k)$ and $\| \Omega_{0,nk} - \Omega_{0,n}\|_\infty \lesssim k\gamma(k)$ by the triangle inequality in \eqref{tria}.  \quad $\blacksquare$
\end{proof}

\begin{lemma}\label{postbound}
	Consider model \eqref{prec_model} and the $k$-BC prior \eqref{kBCprior}. 
	Let 
	\bea
	{\pi}(d_{j}\mid \bfX_n) &:=& IG\left(d_{j}\mid \frac{n_j}{2}, \frac{n}{2}\what{d}_{jk},  d_{j}\le M  \right), \\
	\widetilde{\pi}(d_{j}\mid \bfX_n) &:=& IG\left(d_{j}\mid \frac{n_j}{2}, \frac{n}{2}\what{d}_{jk}  \right),
	\eea
	for $j=1,\ldots,p$. 
	If $M \ge 9\epsilon_0^{-1}$, $\nu_0 = o(n)$ and $k + \log p = o(n)$, then on $(\bfX_n \in N_n)$,
	\bean\label{const_post}
	\begin{split}
	\pi(A_{n}, D_{n} \mid \bfX_n) \,\,&=\,\, \pi(d_{1}\mid \bfX_n) \prod_{j=2}^p \pi(a_j \mid d_{j}, \bfX_n)\pi(d_{j}\mid \bfX_n) \\
	\,\,&\lesssim \,\,  \widetilde{\pi}(d_{1}\mid\bfX_n) \prod_{j=2}^p {\pi}(a_j \mid d_{j},\bfX_n) \widetilde{\pi}(d_{j}\mid \bfX_n) 
	\end{split}
	\eean
	for all sufficiently large $n$, where the set $N_n$ is defined at Lemma \ref{Nnset}.
\end{lemma}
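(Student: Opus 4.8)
The plan is to cancel the common factors in \eqref{const_post} and reduce the claim to a lower bound on the normalizing constants of the truncated inverse-gamma marginals. Since the conditional density $\pi(a_j\mid d_j,\bfX_n)=N_{\min(j-1,k)}\big(a_j\mid \what a_j^{(k)},\tfrac{d_j}{n}\what\V^{-1}(Z_j^{(k)})\big)$ is identical on both sides of \eqref{const_post}, it cancels, so it suffices to show that, on $(\bfX_n\in N_n)$ and for all sufficiently large $n$,
\[
\prod_{j=1}^p \pi(d_j\mid\bfX_n)\ \le\ 2\prod_{j=1}^p \widetilde\pi(d_j\mid\bfX_n).
\]
Let $q_j:=1-F_{IG}\big(M\mid n_j/2,\tfrac n2\what d_{jk}\big)$ be the mass that the nontruncated posterior $\widetilde\pi(d_j\mid\bfX_n)$ puts on $\{d_j>M\}$. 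Then $\pi(d_j\mid\bfX_n)=\widetilde\pi(d_j\mid\bfX_n)\,I(d_j\le M)/(1-q_j)\le\widetilde\pi(d_j\mid\bfX_n)/(1-q_j)$, so the display follows from $\prod_{j=1}^p(1-q_j)\ge 1-\sum_{j=1}^p q_j$ once we show $\sum_{j=1}^p q_j\le\tfrac12$ on $N_n$.

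The next step is to bound $\what d_{jk}$ uniformly on $N_n$. Because $\what d_{jk}$ is the residual variance from regressing $X_j$ on $Z_j^{(k)}$, we have $\what d_{jk}\le\what\V(X_j)$, and $\what\V(X_j)$ is a diagonal entry of $\what\V(Z_{j+1}^{(k+1)})$, so $\what d_{jk}\le\|\what\V(Z_{j+1}^{(k+1)})\|\le C_1=\epsilon_0^{-1}(2+\sqrt{(k+1)/n})^2$ on $N_n\subseteq N_{1n}$. Since $k+\log p=o(n)$ and $\nu_0=o(n)$, for large $n$ we get $C_1\le 5\epsilon_0^{-1}$ and $n_j=n(1+o(1))$ uniformly in $j$; combined with $M\ge 9\epsilon_0^{-1}$ this yields $\what d_{jk}/M\le 5/9$, so $M$ lies above the posterior mean $\tfrac{n\what d_{jk}}{n_j-2}$ of $d_j$ by a fixed multiplicative factor.

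To control $q_j$ I would use a Gamma lower-tail bound. If $W_j\sim IG(n_j/2,\tfrac n2\what d_{jk})$ then $W_j\overset{d}{=}\big(\tfrac n2\what d_{jk}\big)/G_j$ with $G_j\sim\mathrm{Gamma}(n_j/2,1)$, whence
\[
q_j=\bbP(W_j>M)=\bbP\Big(G_j<\tfrac{n\what d_{jk}}{2M}\Big)\le\bbP\Big(G_j<\tfrac{5n}{18}\Big).
\]
Since $\E G_j=n_j/2=\tfrac n2(1+o(1))$, the threshold $\tfrac{5n}{18}$ is below $\E G_j$ by a fixed multiplicative factor, so the standard Chernoff bound for the lower tail of a Gamma random variable of shape of order $n$ gives $q_j\le e^{-cn}$ for a constant $c>0$ depending only on $\epsilon_0$ and $M$, uniformly in $j$. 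Therefore $\sum_{j=1}^p q_j\le p\,e^{-cn}\to 0$ because $\log p=o(n)$, and in particular $\sum_{j=1}^p q_j\le\tfrac12$ for all sufficiently large $n$, which completes the proof.

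The main obstacle is the quantitative bookkeeping: one must verify that the constant-factor gap between the high-probability upper bound $C_1$ on $\what d_{jk}$ (hence on the mean of the nontruncated posterior of $d_j$) and the truncation level $M$ — guaranteed by $M\ge 9\epsilon_0^{-1}$ together with $C_1\to 4\epsilon_0^{-1}$ — is large enough that the Gamma Chernoff exponent is of order $n$ and therefore dominates $\log p=o(n)$. Everything else is deterministic once $\bfX_n\in N_n$, and the only fact about the data that enters is $\what d_{jk}\le\what\V(X_j)$ together with the inclusion $N_n\subseteq N_{1n}$ from Lemma~\ref{Nnset}.
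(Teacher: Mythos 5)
Your proof is correct and follows essentially the same line as the paper: both reduce the claim to showing that the nontruncated posterior of $d_j$ places exponentially little mass above $M$ on $N_n$, using the observed bound $\what d_{jk}\le C_1\to 4\epsilon_0^{-1}$ together with $M\ge 9\epsilon_0^{-1}$, and then exploit $\log p=o(n)$ to control the product over $j$. The only superficial differences are that the paper applies the sub-gamma concentration inequality of Boucheron et al.\ to $d_j^{-1}$ and bounds the product by $[\min_j\widetilde\pi(d_j\le M\mid\bfX_n)]^{-p}$, whereas you apply the equivalent Gamma lower-tail Chernoff bound and assemble via the Weierstrass inequality $\prod_j(1-q_j)\ge 1-\sum_j q_j$, which is the same technique with slightly cleaner bookkeeping.
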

\begin{proof}[Proof of Lemma \ref{postbound}.]
	By the posterior distribution \eqref{post},
	\bea
	\pi(d_{j}\mid \bfX_n) 
	&=& \frac{ IG\left(d_{j}\mid n_j/2, n\what{d}_{jk}/2  \right) I(d_{j}\le M) }{\int_{0}^{M}IG\left({d}'_{j}\mid n_j/2, n\what{d}_{jk}/2 \right)   d {d}'_{j}  }
	\eea
	for $j=1,\ldots,p$. To show \eqref{const_post}, it suffices to prove, on $(\bfX_n \in N_n)$,
	\bea
	\left[ \min_j \widetilde{\pi}( {d}_{j} \le M \mid \bfX_n )  \right]^{-p} &\le& C
	\eea
	for some constant $C>0$. Note that on $(\bfX_n \in N_n)$, $C_1^{-1} \le \what{d}_{jk}^{-1} \le C_2$ and 
	\bea
	\widetilde{\pi}( {d}_{j} \le M \mid \bfX_n ) 
	&=& \widetilde{\pi}( M^{-1}\le {d}_{j}^{-1}  \mid \bfX_n )  \\
	&=& \widetilde{\pi} \left( M^{-1} - \frac{n_j}{n}\what{d}_{jk}^{-1} \le {d}_{j}^{-1} - \frac{n_j}{n}\what{d}_{jk}^{-1}  \mid \bfX_n  \right) \\
	&=& 1- \widetilde{\pi} \left( {d}_{j}^{-1} - \frac{n_j}{n}\what{d}_{jk}^{-1} < M^{-1} - \frac{n_j}{n}\what{d}_{jk}^{-1}   \mid \bfX_n  \right) .
	\eea
	By Boucheron et al. (2013,\nocite{boucheron2013concentration} page 29), if $X$ is a sub-gamma random variable with variance factor $\nu$ and scale parameter $c$, 
	\bean\label{subgam_ineq}
	\max \left[ P(X > \sqrt{2\nu t} + ct ), P(X < -\sqrt{2\nu t} - ct)  \right] &\le& e^{-t}
	\eean
	for all $t>0$. Since a centered $Gamma(a,b)$ random variable is a sub-gamma random variable with $\nu= a/b^2$ and $c = 1/b$, 
	applying $t= n t'$ with $t' = (M -2C_1)^2/(8 M)^2 <1$ to the inequality \eqref{subgam_ineq},
	\bea
	e^{-nt' } &\ge& \widetilde{\pi} \left( {d}_{j}^{-1} - \frac{n_j}{n}\what{d}_{jk}^{-1} < -2 \sqrt{\frac{n_j}{n}} \what{d}_{jk}^{-1}\sqrt{t'} - 2 \what{d}_{jk}^{-1} t' \mid \bfX_n   \right)   \\
	&\ge& \widetilde{\pi} \left( {d}_{j}^{-1} - \frac{n_j}{n}\what{d}_{jk}^{-1} < -4 \what{d}_{jk}^{-1} \sqrt{t'} \mid \bfX_n   \right) \\
	&\ge& \widetilde{\pi} \left( {d}_{j}^{-1} - \frac{n_j}{n}\what{d}_{jk}^{-1} < M^{-1} - \frac{n_j}{n}\what{d}_{jk}^{-1} \mid \bfX_n   \right)
	\eea
	because $M \ge 9 \epsilon_0^{-1} > 2C_1$ for all sufficiently large $n$ and $\nu_0 = o(n)$. 
	Thus, for some constant $C>0$, on $(\bfX_n \in N_n)$,
	\bean\label{djk_concent}
	\widetilde{\pi}( {d}_{j} \le M \mid \bfX_n ) 
	&\ge& 1 - e^{-Cn} ,
	\eean
	and
	\bea
	\left[ \min_j \widetilde{\pi}( {d}_{j} \le M \mid \bfX_n )  \right]^{-p} &\le& (1 - e^{-Cn})^{-p} \\
	&=& (1 - e^{-Cn})^{-e^{Cn}\cdot p/e^{Cn} }\\
	&\le& (C')^{p/e^{Cn}} \,\,\lra\,\, 1
	\eea
	as $n\to\infty$ for some constant $C'>0$. \quad $\blacksquare$
\end{proof}

\begin{lemma}\label{Ank_post}
	Consider the model \eqref{prec_model} and the $k$-BC prior \eqref{kBCprior} with $M\ge 9\epsilon_0^{-1}$ and $\nu_0 = o(n)$. If $k+\log p = o(n)$, then
	\bea
	\bbE^\pi \left( \|A_{n} - \what{A}_{nk}\|_\infty^2 \mid \bfX_n  \right) &\le& C k \left(\frac{k + \log p}{n}  \right)  \quad \text{ on } (\bfX_n \in N_n), \\
	\bbE^\pi \left( \|A_{n} - \what{A}_{nk}\|_1^2 \mid \bfX_n  \right) &\le& C k \left(\frac{k + \log p}{n}  \right)  \quad \text{ on } (\bfX_n \in N_n),
	\eea
	for some constant $C>0$ and all sufficiently large $n$.
\end{lemma}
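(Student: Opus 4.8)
The plan is to condition on the diagonal matrix $D_n$, which turns $A_n-\what{A}_{nk}$ into a matrix whose rows are independent low-dimensional Gaussian vectors, and then apply standard Gaussian / sub-Gaussian maximal inequalities. Throughout, fix $\bfX_n\in N_n$; since $k=o(n)$ the constants $C_1,C_2$ of Lemma \ref{Nnset} are bounded, and $\what{a}_j^{(k)}$, $\what{\V}(Z_j^{(k)})$ and the posterior law of $D_n$ are all determined by $\bfX_n$. By the tower property $\bbE^\pi(\|A_n-\what{A}_{nk}\|_\infty^2\mid\bfX_n)=\bbE^\pi\big[\bbE^\pi(\|A_n-\what{A}_{nk}\|_\infty^2\mid D_n,\bfX_n)\mid\bfX_n\big]$, and likewise for $\|\cdot\|_1$; so it suffices to bound the inner conditional expectation uniformly over $D_n$ in the support of the posterior, i.e. over $d_j\in(0,M)$. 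Under \eqref{post}, given $D_n$ the rows $\delta_j:=a_j-\what{a}_j^{(k)}$ ($j=2,\ldots,p$; put $\delta_1:=0$) are independent with $\delta_j\sim N_{m_j}\big(0,\,n^{-1}d_j\,\what{\V}^{-1}(Z_j^{(k)})\big)$, $m_j:=\min(j-1,k)$. Since $\what{\V}(Z_j^{(k)})$ is a leading principal submatrix of $\what{\V}(Z_{j+1}^{(k+1)})$, eigenvalue interlacing together with the definition of $N_{2n}$ gives $\|\what{\V}^{-1}(Z_j^{(k)})\|\le C_2$ on $N_n$; combined with $d_j<M$, the conditional covariance $\Sigma_j$ of $\delta_j$ has operator norm at most $\sigma^2:=MC_2/n$, and in particular every coordinate $(\delta_j)_l$ is centered Gaussian with variance at most $\sigma^2$.

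For the matrix $\ell_\infty$ bound, write $\|A_n-\what{A}_{nk}\|_\infty=\max_{1\le j\le p}\|\delta_j\|_1$ and $\delta_j=\Sigma_j^{1/2}g_j$ with $g_j\sim N(0,I_{m_j})$ and $\|\Sigma_j^{1/2}\|\le\sigma$. Then $\|\delta_j\|_1\le\sqrt{m_j}\,\|\delta_j\|_2\le\sqrt{k}\,\sigma\,\|g_j\|_2$, where $\|g_j\|_2^2\sim\chi^2_{m_j}$ with $m_j\le k$. By the standard $\chi^2$ tail bound $\bbP(\chi^2_m\ge m+2\sqrt{mt}+2t)\le e^{-t}$ and a union bound over $j\le p$ (with $t$ replaced by $t+\log p$), $\max_j\|\delta_j\|_1^2\lesssim k\sigma^2(k+\log p+t)$ with conditional probability at least $1-e^{-t}$; integrating in $t$ yields $\bbE^\pi(\|A_n-\what{A}_{nk}\|_\infty^2\mid D_n,\bfX_n)\lesssim k\sigma^2(k+\log p)\asymp k(k+\log p)/n$, uniformly over $D_n$, which is the first assertion.

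For the matrix $\ell_1$ bound, $\|A_n-\what{A}_{nk}\|_1=\max_{1\le l\le p}W_l$, where $W_l$ is the sum of $|(\delta_j)_l|$ over the at most $k$ row indices $j\in\{l+1,\ldots,\min(l+k,p)\}$ for which column $l$ lies in the band of row $j$; for fixed $l$ and conditionally on $D_n,\bfX_n$ these summands are independent $N(0,\sigma_{jl}^2)$ with $\sigma_{jl}^2\le\sigma^2$. Hence $\bbE^\pi(W_l\mid D_n,\bfX_n)\le k\sigma\sqrt{2/\pi}$, and $W_l$ minus its conditional mean is a sum of at most $k$ independent centered random variables each sub-Gaussian with variance proxy $\le\sigma^2$ (as $t\mapsto|t|$ is $1$-Lipschitz), hence sub-Gaussian with variance proxy $\le k\sigma^2$; a union bound over $l\le p$ and integration in the tail parameter give $\bbE^\pi(\|A_n-\what{A}_{nk}\|_1^2\mid D_n,\bfX_n)\lesssim k^2\sigma^2+k\sigma^2\log p\asymp k(k+\log p)/n$, uniformly over $D_n$, which is the second assertion.

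The only step requiring genuine care is the reduction above: one must simultaneously invoke the truncation $d_j<M$ built into the prior \eqref{kBCprior} and the event $N_n$ (through $\|\what{\V}^{-1}(Z_j^{(k)})\|\le C_2$, via interlacing) to pin all conditional coordinate variances at order $1/n$. Granted that, the remaining steps are routine Gaussian and sub-Gaussian maximal inequalities. A minor bookkeeping point is that the rows $\delta_j$ entering the $\ell_\infty$ maximum are independent whereas the column sums $W_l$ entering the $\ell_1$ maximum are mutually dependent (they share common $\delta_j$'s); this is harmless, since only union bounds, and not independence across the max index, are used.
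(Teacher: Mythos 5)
Your proof is correct, and the overall strategy (reduce to whitened Gaussian coordinates and apply chi-square / sub-Gaussian maximal inequalities) is the same as the paper's. The one genuine difference is how you handle the truncation $d_j\le M$: you condition on $D_n$, observe that the conditional law of $A_n$ given $D_n,\bfX_n$ is unaffected by whether the marginal of $d_j$ is truncated, and then bound the conditional covariance uniformly over $D_n\in(0,M)^p$ using the interlacing argument for $\what{\V}^{-1}(Z_j^{(k)})$. The paper instead invokes its Lemma \ref{postbound} to replace $\pi$ by the nontruncated posterior $\widetilde\pi$ before extracting the $\chi^2_{jk}$ variables. Your tower-property argument shows Lemma \ref{postbound} is in fact unnecessary for this particular lemma (it \emph{is} needed elsewhere, e.g.\ for Lemma \ref{Dinv_subG}, where the bound depends on the law of $d_j$ itself rather than on $A_n\mid D_n$); this is a small but real streamlining. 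Your $\ell_1$ bound is also more explicit than the paper's, which just asserts the result ``by similar arguments,'' and you correctly flag the bookkeeping subtlety that the column sums $W_l$ are not mutually independent (they involve different entries of the same correlated row $a_j$), though this is harmless because the maximal inequality only uses a union bound. One small imprecision worth noting: the paper's phrase that the column vectors $a_{c_j}$ are ``independent multivariate normal distributions'' is best read as saying the entries \emph{within} a column are independent (they come from different rows), not that distinct columns are independent; your account of the dependence structure is the more careful one.
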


\begin{proof}[Proof of Lemma \ref{Ank_post}.]
	Let $\bbE^{\widetilde{\pi}}(\cdot \mid \bfX_n)$ denote the expectation with respect to $\widetilde{\pi}(d_{1}\mid \bfX_n) \prod_{j=2}^p {\pi}(a_j \mid d_{j}, \bfX_n) \widetilde{\pi}(d_{j}\mid \bfX_n)$ in Lemma \ref{postbound}.
	Note that on $(\bfX_n \in N_n)$,
	\bea
	&&\bbE^\pi \left( \|A_{n} - \what{A}_{nk}\|_\infty^2 \mid \bfX_n  \right) \\
	&\le& k  \cdot \bbE^\pi \left( \max_j \| a_j - \what{a}_j^{(k)}\|_2^2  \mid \bfX_n  \right) \\
	&\le& k \cdot \bbE^{\pi} \left( \max_j \frac{d_{j}}{n} \Big\| \what{\V}^{-1}(Z_j^{(k)})\Big\| \cdot \Big\| \sqrt{\frac{n}{d_{j}}} \cdot \what{\V}^{1/2}(Z_j^{(k)})\cdot (a_j - \what{a}_j^{(k)})\Big\|_2^2  \mid \bfX_n  \right)\\
	&\le& \frac{k M C_2}{n} \cdot \bbE^{\pi} \left( \max_j   \Big\| \sqrt{\frac{n}{d_{j}}} \cdot \what{\V}^{1/2}(Z_j^{(k)})\cdot (a_j - \what{a}_j^{(k)})\Big\|_2^2  \mid \bfX_n  \right) \\
	&\lesssim& \frac{k}{n} \cdot \bbE^{\widetilde{\pi}} \left( \max_j  \Big\| \sqrt{\frac{n}{d_{j}}} \cdot \what{\V}^{1/2}(Z_j^{(k)})\cdot (a_j - \what{a}_j^{(k)})\Big\|_2^2  \mid \bfX_n  \right) \\
	&=& \frac{k}{n} \cdot \bbE \left( \max_j \chi_{jk}^2 \right)
	\eea
	by Lemma \ref{postbound}.  $\chi_{jk}^2$ is a chi-square random variable with $k_j := \min(j-1,k)$ degree of freedom. By the maximal inequality for chi-square random variables (Boucheron et al., 2013, Example 2.7), 
	\bea
	\bbE \left( \max_j \chi_{jk}^2 \right) &=& k_j + \bbE \left( \max_j \chi_{jk}^2 - k_j \right) \\
	&\le& C \left( k  + \log p \right)
	\eea
	for some constant $C>0$. Thus, we have
	\bea
	\bbE^\pi \left( \|A_{n} - \what{A}_{nk}\|_\infty^2 \mid \bfX_n  \right)
	&\le& Ck \left(\frac{k+ \log p}{n}  \right)
	\eea
	on $(\bfX_n \in N_n)$, for some constant $C>0$. 
	
	Let $a_{c_j}:= (a_{j+1,j}, \ldots, a_{\min(j+k,p),j} )^T$ be the nonzero column vector of $A_{n}$. Since the posterior distributions for $a_{c_j}$'s are the independent multivariate normal distributions with finite variances whose rate is $1/n$ on $(\bfX_n \in N_n)$, it is easy to show that
	\bea
	\bbE^\pi \left( \|A_{n} - \what{A}_{nk}\|_1^2 \mid \bfX_n  \right)
	&\le& Ck \left(\frac{k+ \log p}{n}  \right)
	\eea 
	on $(\bfX_n \in N_n)$, for some constant $C>0$ using similar arguments. \quad $\blacksquare$
\end{proof}

\begin{lemma}\label{Dinv_subG}
	Consider the model \eqref{prec_model} and the $k$-BC prior \eqref{kBCprior} with $M\ge 9\epsilon_0^{-1}$ and $\nu_0 = o(n)$. If $k+\log p = o(n)$ and $k^2 = O(n \log p)$, then
	\bea
	\bbE^\pi \left(\|D_{n}^{-1}- \what{D}_{nk}^{-1}\|_\infty \mid \bfX_n \right) &\le& C \left(\frac{\log p}{n}\right)^{1/2} \quad \text{ on } (\bfX_n \in N_n)
	\eea
	for some constant $C>0$ and all sufficiently large $n$.
\end{lemma}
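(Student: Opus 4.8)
The plan is to reduce the statement to a one-dimensional maximal inequality for the diagonal entries. Since $D_n^{-1}-\what D_{nk}^{-1}$ is diagonal, $\|D_n^{-1}-\what D_{nk}^{-1}\|_\infty=\max_{1\le j\le p}|d_j^{-1}-\what d_{jk}^{-1}|$, so it suffices to control $\bbE^\pi\big(\max_j|d_j^{-1}-\what d_{jk}^{-1}|\mid\bfX_n\big)$. By Lemma \ref{postbound}, on $(\bfX_n\in N_n)$ the joint posterior is dominated, up to a universal constant, by $\widetilde\pi(d_1\mid\bfX_n)\prod_{j\ge 2}\pi(a_j\mid d_j,\bfX_n)\widetilde\pi(d_j\mid\bfX_n)$; since the quantity of interest depends only on $D_n$, it is enough to bound $\bbE^{\widetilde\pi}\big(\max_j|d_j^{-1}-\what d_{jk}^{-1}|\mid\bfX_n\big)$, where under $\widetilde\pi$ the $d_j^{-1}$ are independent and each $d_j^{-1}$ is gamma-distributed with shape $n_j/2$ and rate $n\what d_{jk}/2$.

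Next I would split $d_j^{-1}-\what d_{jk}^{-1}=\big(d_j^{-1}-\bbE^{\widetilde\pi}(d_j^{-1}\mid\bfX_n)\big)+\big(\tfrac{n_j}{n}-1\big)\what d_{jk}^{-1}$, using $\bbE^{\widetilde\pi}(d_j^{-1}\mid\bfX_n)=n_j\what d_{jk}^{-1}/n$. For the deterministic term, $|n_j/n-1|=|\nu_0-\min(j-1,k)-4|/n\le(\nu_0+k+4)/n$, and, as in the proof of Lemma \ref{postbound}, on $(\bfX_n\in N_n)$ one has $\what d_{jk}^{-1}\le C_2$; hence $\max_j\big|(n_j/n-1)\what d_{jk}^{-1}\big|\lesssim(\nu_0+k)/n$, which is $O\big((\log p/n)^{1/2}\big)$ under the hypotheses $k^2=O(n\log p)$ and $\nu_0=o(n)$. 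For the stochastic term, recall the fact underlying \eqref{subgam_ineq}: a centered gamma variable with shape $\alpha$ and rate $\beta$ is sub-gamma with variance factor $\alpha/\beta^2$ and scale $1/\beta$. With $\alpha=n_j/2$ and $\beta=n\what d_{jk}/2$ these equal $2n_j/(n\what d_{jk})^2$ and $2/(n\what d_{jk})$, and on $(\bfX_n\in N_n)$, using $\what d_{jk}^{-1}\le C_2$ and $n_j\le n+\nu_0=O(n)$, both are bounded by $C/n$ uniformly in $j$ for some constant $C$.

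Then, since the $d_j^{-1}$ are independent under $\widetilde\pi$, the maximal inequality for sub-gamma random variables (Boucheron et al., 2013), applied to the $2p$ variables $\pm\big(d_j^{-1}-\bbE^{\widetilde\pi}(d_j^{-1}\mid\bfX_n)\big)$, yields
\[
\bbE^{\widetilde\pi}\Big(\max_{1\le j\le p}\big|d_j^{-1}-\bbE^{\widetilde\pi}(d_j^{-1}\mid\bfX_n)\big|\ \Big|\ \bfX_n\Big)\ \lesssim\ \Big(\frac{\log p}{n}\Big)^{1/2}+\frac{\log p}{n}\ \lesssim\ \Big(\frac{\log p}{n}\Big)^{1/2},
\]
the final step because $\log p=o(n)$. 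Combining this with the deterministic bound by the triangle inequality, and undoing the reduction of the first paragraph via Lemma \ref{postbound}, gives $\bbE^\pi\big(\|D_n^{-1}-\what D_{nk}^{-1}\|_\infty\mid\bfX_n\big)\lesssim(\log p/n)^{1/2}$ on $(\bfX_n\in N_n)$.

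I expect the main obstacle to be the uniform-in-$j$ control of the sub-gamma parameters on $(\bfX_n\in N_n)$ — this is exactly where the two-sided bounds $C_1^{-1}\le\what d_{jk}^{-1}\le C_2$ from $N_{1n}\cap N_{2n}$ (and $\nu_0=o(n)$, $k+\log p=o(n)$) enter — together with ensuring the deterministic correction $(n_j/n-1)\what d_{jk}^{-1}$ does not exceed the target rate, which is what the bandwidth condition $k^2=O(n\log p)$ secures. The remaining steps are routine bookkeeping with the explicit posterior \eqref{post} and a maximal inequality already invoked elsewhere in this section.
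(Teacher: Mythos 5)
Your proof is correct and follows essentially the same route as the paper: reduce via Lemma \ref{postbound} to the nontruncated posterior $\widetilde\pi$, split $d_j^{-1}-\what d_{jk}^{-1}$ into a centered gamma fluctuation plus the deterministic correction $(n_j/n-1)\what d_{jk}^{-1}$, and use sub-gamma tail control uniformly over $j$. The only difference is that you invoke the packaged maximal inequality of Boucheron et al.\ for sub-gamma variables, whereas the paper unrolls its proof by hand via Jensen's inequality, a union bound on the MGF, and the choice $\lambda\asymp(n\log p)^{1/2}$ --- the underlying argument is identical.
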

\begin{proof}[Proof of Lemma \ref{Dinv_subG}]
	By Lemma \ref{postbound}, on $(\bfX_n \in N_n)$,
	\bea
	\bbE^\pi \left(\|D_{n}^{-1}- \what{D}_{nk}^{-1}\|_\infty \mid \bfX_n \right) 
	&\le& C \bbE^{\widetilde{\pi}} \left(\|D_{n}^{-1}- \what{D}_{nk}^{-1}\|_\infty \mid \bfX_n \right)
	\eea
	for some constant $C>0$. It is easy to show that 
	\bea
	\bbE^{\widetilde{\pi}} \left(\|D_{n}^{-1}- \what{D}_{nk}^{-1}\|_\infty \mid \bfX_n \right)
	&\le& \bbE^{\widetilde{\pi}} \left( \max_j \left| d_{j}^{-1} - \frac{n_j}{n}\what{d}_{jk}^{-1} \right| \mid \bfX_n \right)
	+ \max_j \left| \frac{n-n_j}{n} \what{d}_{jk}^{-1}  \right| \\
	&\le& \frac{1}{\lambda} \log \exp \bbE^{\widetilde{\pi}} \left( \lambda \max_j \left| d_{j}^{-1} - \frac{n_j}{n}\what{d}_{jk}^{-1} \right| \mid \bfX_n \right) + \frac{2k}{n}C_2 \\
	&\le& \frac{1}{\lambda} \log  \bbE^{\widetilde{\pi}} \left(  \max_j e^{ \lambda | d_{j}^{-1} - \frac{n_j}{n}\what{d}_{jk}^{-1}  |} \mid \bfX_n \right) + \frac{2k}{n}C_2  \\
	&\le& \frac{1}{\lambda} \log \left[p\cdot \max_j \bbE^{\widetilde{\pi}} \left(   e^{ \lambda | d_{j}^{-1} - \frac{n_j}{n}\what{d}_{jk}^{-1}  |} \mid \bfX_n \right)\right] + \frac{2k}{n}C_2 
	\eea
	for any $\lambda>0$, on $(\bfX_n \in N_n)$. Let $\lambda < n\what{d}_{jk}/2$.
	Note that the upper bound for the moment generating function of $| d_{j}^{-1} - n_j\what{d}_{jk}^{-1}/n |$ is given by
	\bea
	\bbE^{\widetilde{\pi}} \left(   e^{ \lambda | d_{j}^{-1} - \frac{n_j}{n}\what{d}_{jk}^{-1}  |} \mid \bfX_n \right) 
	&=& \int_0^\infty e^{ \lambda | d_{j}^{-1} - \frac{n_j}{n}\what{d}_{jk}^{-1}  |} Gamma\left( d_{j}^{-1}\mid \frac{n_j}{2}, \frac{n}{2}\what{d}_{jk} \right) d d_{j}^{-1} \\
	&\le& \int_0^{n_j \what{d}_{jk}^{-1}/n } e^{ \lambda (\frac{n_j}{n}\what{d}_{jk}^{-1} -  d_{j}^{-1}  )}Gamma\left( d_{j}^{-1}\mid \frac{n_j}{2}, \frac{n}{2}\what{d}_{jk} \right) d d_{j}^{-1} \\
	&+& \bbE^{\widetilde{\pi}} \left(   e^{ \lambda ( d_{j}^{-1} - \frac{n_j}{n}\what{d}_{jk}^{-1} )} \mid \bfX_n \right) \\
	&\le& e^{ \lambda \frac{n_j}{n}\what{d}_{jk}^{-1} } \int_0^{\infty} e^{ -\lambda  d_{j}^{-1} }Gamma\left( d_{j}^{-1}\mid \frac{n_j}{2}, \frac{n}{2}\what{d}_{jk} \right) d d_{j}^{-1} \\
	&+& \exp \left( \frac{n_j \lambda^2}{n \what{d}_{jk} (n \what{d}_{jk}-2\lambda)  } \right) \\
	&\le& e^{ \lambda \frac{n_j}{n}\what{d}_{jk}^{-1} } \left( \frac{n\what{d}_{jk}}{n \what{d}_{jk}+2\lambda} \right)^{n_j/2} + \exp \left( \frac{n_j \lambda^2}{n \what{d}_{jk} (n \what{d}_{jk}-2\lambda)  } \right).
	\eea
	The second inequality follow from page 28 of Boucheron et al. (2013)\nocite{boucheron2013concentration}. Since  $\lambda < n\what{d}_{jk}/2$, 
	\bea
	e^{ \lambda \frac{n_j}{n}\what{d}_{jk}^{-1} } \left( \frac{n\what{d}_{jk}}{n \what{d}_{jk}+2\lambda} \right)^{n_j/2}
	&=& e^{ \lambda n_j/(n\what{d}_{jk}) } \left( 1 + \frac{2\lambda}{n \what{d}_{jk}} \right)^{-n_j/2} \\
	&\le& \left( 1 + \frac{2\lambda }{n \what{d}_{jk}} \right)^{\lambda n_j /(2n \what{d}_{jk})} \\
	&=& \left( 1 + \frac{2\lambda }{n \what{d}_{jk}} \right)^{n\what{d}_{jk}/(2\lambda) \cdot \lambda^2 n_j /(n^2 \what{d}_{jk}^2) } \\
	&\le& \exp \left( \frac{\lambda^2 n_j}{n^2 \what{d}_{jk}^2} \right),
	\eea
	where the first inequality follows from Lemma \ref{expineq}. Thus, on $(\bfX_n \in N_n)$,
	\bea
	\bbE^{\widetilde{\pi}} \left(\|D_{n}^{-1}- \what{D}_{nk}^{-1}\|_\infty \mid \bfX_n \right)
	&\le& \frac{1}{\lambda} \log \left[p\cdot \max_j \bbE^{\widetilde{\pi}} \left(   e^{ \lambda | d_{j}^{-1} - \frac{n_j}{n}\what{d}_{jk}^{-1}  |} \mid \bfX_n \right)\right] + \frac{2k}{n}C_2 \\
	&\le& \frac{\log p}{\lambda} + \frac{1}{\lambda}\max_j \log \left[\exp \left( \frac{\lambda^2 n_j}{n^2 \what{d}_{jk}^2} \right) +  \exp \left( \frac{n_j \lambda^2}{n \what{d}_{jk} (n \what{d}_{jk}-2\lambda)  } \right)\right] \\
	&+& \frac{2k}{n}C_2 \\
	&\le& \frac{\log p}{\lambda} + \frac{2\log 2}{\lambda} + \max_j \left( \frac{\lambda n_j}{n^2 \what{d}_{jk}^2} + \frac{n_j \lambda}{n \what{d}_{jk} (n \what{d}_{jk}-2\lambda)  } \right) + \frac{2k}{n}C_2 \\
	&\le& \frac{\log p}{\lambda} + \frac{2\log 2}{\lambda} +  \frac{\lambda C_2^{2}}{n } + \frac{\lambda C_2}{  (n C_2^{-1} -2\lambda)  } + \frac{2k}{n}C_2  \\
	&\le& C \left( \frac{\log p}{n} \right)^{1/2}
	\eea
	for some constant $C>0$ if we choose $\lambda \asymp (n \log p)^{1/2}$. \quad $\blacksquare$
\end{proof}

\begin{proof}[Proof of Theorem \ref{main_pre_spec}]
	Note that 
	\bean
	\bbE_{0n}\bbE^\pi \left( \|\Omega_{n}-\Omega_{0,n}\| \mid \bfX_n \right) &\le&  \bbE_{0n} \left[ \bbE^\pi \left( \|\Omega_{n}-{\Omega}_{0,n}\| \mid \bfX_n \right)I(\bfX_n \in N_n)\right] \label{epr1} \\
	&+& \bbE_{0n} \left[ \bbE^\pi \left( \|\Omega_{n}-{\Omega}_{0,n}\| \mid \bfX_n \right)I(\bfX_n \in N_n^c)\right] \label{epr2}
	\eean
	where the set $N_n$ is defined at Lemma \ref{Nnset}. 
	The term \eqref{epr2} is bounded above by
	\bea
	&&\bbE_{0n} \left[ \left( \bbE^\pi ( \|\Omega_{n}\| \mid \bfX_n) + \|\Omega_{0,n}\| \right)  I(\bfX_n \in N_n^c) \right]\\
	&\le& \bbE_{0n} \left[ \left( \bbE^\pi ( \|I_p-A_{n}\|_1 \|I_p- A_{n}\|_\infty \|D_{n}^{-1}\|  \mid \bfX_n) + \|\Omega_{0,n}\| \right)  I(\bfX_n \in N_n^c) \right] \\
	&\le& \left\{ \bbE_{0n} \left[  \bbE^\pi ( \|I_p-A_{n}\|_1 \|I_p- A_{n}\|_\infty \|D_{n}^{-1}\|  \mid \bfX_n)  \right]^2 \right\}^{1/2} \bbP_{0n} ( \bfX_n \in N_n^c)^{1/2}  \\
	&+&  \|\Omega_{0,n}\|_\infty \bbP_{0n} ( \bfX_n \in N_n^c) \\
	&\le& \left( p^\kappa +  C \right)\cdot \left(6 p e^{-n(1-\sqrt{(k+1)/n})^2/8} + 4 \cdot 5^k e^{- C_3 C_5 \epsilon_0^{2} (k+ \log (n\vee p))} \right)^{1/2} \\
	&\lesssim& n^{-1}
	\eea
	for all sufficiently large $n$ and some positive constants $\kappa, C_3$ and $C_5$.
	The third inequality follows from Lemma \ref{Al1norm} and Lemma \ref{Nnset}. 
	
	We decompose the term \eqref{epr1} as follows:
	\bean
	&&\bbE_{0n} \left[ \bbE^\pi \left( \|\Omega_{n}-{\Omega}_{0,n}\| \mid \bfX_n \right)I(\bfX_n \in N_n)\right]\nonumber\\
	&\le& \bbE_{0n} \left[ \bbE^\pi \left( \|\Omega_{n}-\what{\Omega}_{nk}\| \mid \bfX_n \right)I(\bfX_n \in N_n)\right] \label{bayes1} \\
	&+& \bbE_{0n} \left[  \|\what\Omega_{nk}-{\Omega}_{0,n}\|  I(\bfX_n \in N_n)\right]. \label{freq1}
	\eean
	By Lemma \ref{freq_as}, the upper bound for \eqref{freq1} is $Ck^{3/4}[((k + \log (n\vee p))/n)^{1/2} + \gamma(k)]$ for some constant $C>0$ because we assume that $k^{3/2}(k +\log (n\vee p))= O(n)$. Note that the term \eqref{bayes1} can be decomposed as \eqref{tria} and
	\bea
	\|I_p - \what{A}_{nk}\|_1 &\le& \| I_p - A_{0,nk}\|_1 + \| \what{A}_{nk} - A_{0,nk}\|_1 \\
	&\le& 1 + \sum_{m=1}^\infty \gamma(m) + Ck \gamma(k) + C k\left(\frac{k + \log (n\vee p)}{n} \right)^{1/2},\\
	\|I_p - \what{A}_{nk}\|_\infty &\le& \| I_p - A_{0,nk}\|_\infty + \| \what{A}_{nk} - A_{0,nk}\|_\infty \\
	&\le& 1 + \gamma(1) + C\sqrt{k} \gamma(k)  + C k^{1/2}\left(\frac{k + \log (n\vee p)}{n} \right)^{1/2},  \\
	\| I_p - \what{A}_{nk}\| &\le& \|I_p - A_{0,nk}\| + \|\what{A}_{nk} - A_{0,nk} \| \\
	&\le& 1 + \sum_{m=1}^\infty \gamma(m) + C k^{3/4}\gamma(k)  + C k^{3/4}  \left(\frac{k + \log (n\vee p)}{n} \right)^{1/2}
	\eea
	and $\|\what{D}_{nk}^{-1}\| \le C_2$ on $(\bfX_n \in N_n)$ for some constant $C>0$. By Lemma \ref{Ank_post} and Lemma \ref{Dinv_subG}, it is easy to show that the upper bound for \eqref{bayes1} is $Ck^{1/2}((k+\log (n\vee p))/n)^{1/2}$ for some constant $C>0$ because we assume that $k^{3/2}(k +\log (n\vee p))= O(n)$. \quad $\blacksquare$
\end{proof}

\begin{proof}[Proof of Theorem \ref{main_pre_linf}]
	We can use the same arguments used in the proof of Theorem \ref{main_pre_spec}. 
	It suffices to prove that 
	\bea
	\|I_p - \what{A}_{nk}\|_1 &\lesssim& \sqrt{k} \text{ on } (\bfX_n \in N_n).
	\eea
	It trivially holds because we assume that $k(k +\log (n\vee p))= O(n)$. \quad $\blacksquare$
\end{proof}

\subsection{Proof of Corollary \ref{pluginmean}}
Lemma \ref{dhat_ahat} is used to prove Theorem \ref{pluginmean}.
\begin{lemma}\label{dhat_ahat}
	Consider the model \eqref{prec_model} and $\Omega_{0,n}\in \cU(\epsilon_0,\gamma)$. Let $\what{d}_{jk}$ and $\what{a}_{ji}^{(k)}$ be defined as before. If $k=o(n)$, then for given positive integer $m$,
	\bea
	\bbE_{0n}(\what{d}_{jk}^{-m} ) &\lesssim&  (k+1)^{m+1}  , \\
	\bbE_{0n}((\what{a}_{ji}^{(k)})^m ) &\lesssim& (k+1)^{2m+1}.
	\eea
\end{lemma}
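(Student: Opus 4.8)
The plan is to reduce both bounds to moments of inverse chi-square variables via the exact sampling distributions attached to Gaussian least squares. Throughout write $k_j:=\min(j-1,k)$, let $d_{0,jk}:=\V(X_j)-\C(X_j,Z_j^{(k)})\V^{-1}(Z_j^{(k)})\C(Z_j^{(k)},X_j)>0$ be the population residual variance of $X_j$ on its $k$ nearest predictors, and let $a_{0,j}^{(k)}:=\V^{-1}(Z_j^{(k)})\C(Z_j^{(k)},X_j)$. Stacking the $n$ observations, $\what d_{jk}=\tfrac1n\|(I-P)x\|^2$ where $x=(X_{1j},\dots,X_{nj})^T$, $P$ is the orthogonal projection onto the column span of the $n\times k_j$ design matrix with rows ${Z_{ij}^{(k)}}^T$, and conditionally on that design $x$ is $N_n(\,\cdot\,,d_{0,jk}I_n)$ with mean lying in the column span. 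Hence by Cochran's theorem $n\what d_{jk}\mid(\text{design})\sim d_{0,jk}\,\chi^2_{n-k_j}$, and since this law is free of the design it is the unconditional law of $n\what d_{jk}$. Now $d_{0,jk}\ge \V(X_j\mid X_1,\dots,X_{j-1})\ge \lambda_{\min}(\sg_{0,n})\ge \epsilon_0$ (conditioning on fewer coordinates only inflates the conditional variance, and $\V(X_j\mid X_{\ne j})=1/(\Omega_{0,n})_{jj}\ge\lambda_{\min}(\sg_{0,n})\ge\epsilon_0$), while $\bbE[(\chi^2_\nu)^{-m}]=\prod_{l=1}^m(\nu-2l)^{-1}$ whenever $\nu>2m$. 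Since $k=o(n)$, for all large $n$ we have $n-k_j\ge n-k\ge n/2>2m$, so $\bbE[(\chi^2_{n-k_j})^{-m}]\le (n/2-2m)^{-m}\lesssim n^{-m}$ and
\[
\bbE_{0n}\big(\what d_{jk}^{-m}\big)=\Big(\frac{n}{d_{0,jk}}\Big)^{m}\bbE\big[(\chi^2_{n-k_j})^{-m}\big]\lesssim \epsilon_0^{-m}=O(1)\lesssim (k+1)^{m+1},
\]
uniformly in $j$.

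For $\what a_{ji}^{(k)}$ I would condition on the design $Z_j^{(k)}$, for which the OLS estimator is exact: $\what a_j^{(k)}\mid Z_j^{(k)}\sim N_{k_j}\big(a_{0,j}^{(k)},\,\tfrac{d_{0,jk}}{n}\what\V^{-1}(Z_j^{(k)})\big)$, so $\what a_{ji}^{(k)}\mid Z_j^{(k)}\sim N(a_{0,ji}^{(k)},\sigma_{ji}^2)$ with $\sigma_{ji}^2=\tfrac{d_{0,jk}}{n}[\what\V^{-1}(Z_j^{(k)})]_{ii}$. The mean is deterministic and small: $|a_{0,ji}^{(k)}|\le\|a_{0,j}^{(k)}\|_2\le\|\V^{-1}(Z_j^{(k)})\|\,\|\C(Z_j^{(k)},X_j)\|_2\le \epsilon_0^{-1}\cdot\sqrt{k_j}\,\epsilon_0^{-1}$, using $|\C(X_l,X_j)|\le\lambda_{\max}(\sg_{0,n})\le\epsilon_0^{-1}$, so $|a_{0,ji}^{(k)}|^m\lesssim (k+1)^{m/2}$. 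For the random variance, the classical Wishart marginal gives $[\what\V^{-1}(Z_j^{(k)})]_{ii}\overset{d}{=} n/(v_{ji}\,\chi^2_{n-k_j+1})$, where $\tfrac1n$ times the denominator is the residual sum of squares from regressing the $i$-th of the $k_j$ predictors on the others and $v_{ji}\ge\epsilon_0$ is the corresponding population conditional variance (same lower bound as above). Thus $\bbE_{0n}\big([\what\V^{-1}(Z_j^{(k)})]_{ii}^{m/2}\big)=(n/v_{ji})^{m/2}\bbE[(\chi^2_{n-k_j+1})^{-m/2}]\lesssim \epsilon_0^{-m/2}$ for large $n$, whence $\bbE_{0n}(\sigma_{ji}^m)\lesssim (\epsilon_0^{-1}/n)^{m/2}\epsilon_0^{-m/2}=o(1)$. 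Finally the elementary Gaussian bound $\bbE(|Y|^m)\le C_m(|\mu|^m+\tau^m)$ for $Y\sim N(\mu,\tau^2)$, applied conditionally and then averaged over $Z_j^{(k)}$, yields
\[
\big|\bbE_{0n}\big((\what a_{ji}^{(k)})^m\big)\big|\le \bbE_{0n}\big(|\what a_{ji}^{(k)}|^m\big)\le C_m\big(|a_{0,ji}^{(k)}|^m+\bbE_{0n}(\sigma_{ji}^m)\big)\lesssim (k+1)^{m/2}+1\lesssim (k+1)^{2m+1}.
\]

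The only real work is in pinning down the two exact distributional facts --- that $n\what d_{jk}$ is a chi-square on $n-k_j$ degrees of freedom scaled by the population residual variance, and that the diagonal entries of $\what\V^{-1}(Z_j^{(k)})$ have the scaled inverse-$\chi^2_{n-k_j+1}$ marginal --- together with the uniform-in-$j$ lower bounds $\epsilon_0$ on all the population conditional variances that enter; everything afterwards is bookkeeping with inverse-chi-square moments and a Gaussian moment inequality. The hypothesis $k=o(n)$ is used precisely to ensure the degrees of freedom exceed $2m$ (resp. $m$), so that these inverse moments are finite and of the right order $n^{-m}$ (resp. $n^{-m/2}$). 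Note the argument actually delivers the stronger bounds $\bbE_{0n}(\what d_{jk}^{-m})=O(1)$ and $\bbE_{0n}(|\what a_{ji}^{(k)}|^m)=O((k+1)^{m/2})$; the looser powers $(k+1)^{m+1}$ and $(k+1)^{2m+1}$ stated in the lemma are all that is needed downstream in Corollary \ref{pluginmean}.
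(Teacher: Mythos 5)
Your argument is correct and takes a genuinely different route from the paper's. The paper proceeds crudely: it bounds $\what d_{jk}^{-1}\le\|\what{\V}^{-1}(Z_{j+1}^{(k+1)})\|\le\mathrm{tr}\,\what{\V}^{-1}(Z_{j+1}^{(k+1)})$, raises to the $m$-th power, and uses a power-mean inequality to reduce to inverse-gamma moments of the $k+1$ diagonal entries; the factors $(k+1)^m$ from the power-mean step and $(k+1)$ from the sum are where the stated $(k+1)^{m+1}$ comes from, and the second claim is handled by the analogous $\|\what{\V}^{-1}\|^m\|\what{\V}\|^m$ plus Cauchy--Schwarz, yielding $(k+1)^{2m+1}$. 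You instead exploit exact sampling distributions: $n\what d_{jk}\sim d_{0,jk}\,\chi^2_{n-k_j}$ (residual sum of squares after projection, unconditionally), the conditional Gaussian law $\what a_j^{(k)}\mid Z_j^{(k)}\sim N(a_{0,j}^{(k)},\tfrac{d_{0,jk}}{n}\what\V^{-1}(Z_j^{(k)}))$, and the $\chi^2_{n-k_j+1}$ marginal of a single diagonal of $\what\V^{-1}(Z_j^{(k)})$. Together with the uniform lower bounds on the population conditional variances (which follow from $\lambda_{\max}(\Omega_{0,n})\le\epsilon_0^{-1}$ and Cauchy interlacing), this gives the sharper conclusions $\bbE_{0n}(\what d_{jk}^{-m})=O(1)$ and $\bbE_{0n}(|\what a_{ji}^{(k)}|^m)=O((k+1)^{m/2})$, from which the lemma's weaker rates follow trivially. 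Your route is tighter and arguably cleaner, at the cost of invoking the exact Gaussian regression/Wishart distributional facts rather than generic trace bounds; both are fine because the data are jointly Gaussian, so all the conditional laws involved are exact.

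One small caution if you were to write this up: the lemma statement asserts $\bbE_{0n}((\what a_{ji}^{(k)})^m)\lesssim(k+1)^{2m+1}$ without absolute values, which is literally only meaningful for even $m$ (for odd $m$ the left side can be negative, and a one-sided $\lesssim$ bound is automatic). You correctly hedge by bounding $|\bbE_{0n}((\what a_{ji}^{(k)})^m)|\le\bbE_{0n}(|\what a_{ji}^{(k)}|^m)$, and since the lemma is only invoked downstream with even powers (via Cauchy--Schwarz in Corollary \ref{pluginmean}), this matches the paper's intended usage.
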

\begin{proof}
	Note that
	\bea
	\bbE_{0n}(\what{d}_{jk}^{-m} ) 
	&\le& \bbE_{0n} \| \what{\V}^{-1}(Z_{j+1}^{(k+1)})\|^m \\
	&\le& \bbE_{0n} \left[tr\left( \what{\V}^{-1}(Z_{j+1}^{(k+1)}) \right)\right]^m \\
	&\le& (k+1)^m \sum_{l=1}^{k+1} \bbE_{0n}  \left[ \what{\V}^{-1}(Z_{j+1}^{(k+1)})_{(l)}\right]^m
	\eea
	where for any $p\times p$ matrix $A$, $A_{(i)}$ is the $(i,i)$ component of $A$. Also note that $[ \what{\V}^{-1}(Z_{j+1}^{(k+1)})]_{(l)}$ is a inverse-gamma distribution $IG((n-k)/2, n[ \V^{-1}(Z_{j+1}^{(k+1)})]_{(l)}/2)$ because diagonal elements of a inverse-Wishart matrix are inverse-gamma random variables (Huang and Wand, 2013)\nocite{huang2013simple}.
	Since $\Omega_{0,n}\in \cU(\epsilon_0,\gamma)$,
	\bea
	(k+1)^m \sum_l \bbE_{0n}  \left[ \what{\V}^{-1}(Z_{j+1}^{(k+1)})_{(l)}\right]^m &\le& (k+1)^{m+1} \left(\frac{n\epsilon_0^{-1}}{n-k-2m} \right)^m \\
	&\lesssim& (k+1)^{m+1}.
	\eea
	Similarly,
	\bea
	\bbE_{0n}((\what{a}_{ji}^{(k)})^m ) &\le& \bbE_{0n}\left[ \|\what{\V}^{-1}(Z_{j+1}^{(k+1)})\|^m \|\what{\V}(Z_{j+1}^{(k+1)})\|^m \right]\\
	&\le& \bbE_{0n} \left\{ \left[tr\left( \what{\V}^{-1}(Z_{j+1}^{(k+1)})\right)\right]^m  \left[tr\left( \what{\V}(Z_{j+1}^{(k+1)})\right)\right]^m \right\} \\
	&\le& \left\{ \bbE_{0n} \left[tr \left( \what{\V}^{-1}(Z_{j+1}^{(k+1)})\right)\right]^{2m}  \bbE_{0n} \left[tr\left( \what{\V}(Z_{j+1}^{(k+1)})\right)\right]^{2m}  \right\}^{1/2}\\
	&\lesssim& (k+1)^{2m+1} 
	\eea
	because diagonal elements of a Wishart matrix are gamma random variables (Rao, 2009)\nocite{rao2009linear}, i.e. $[\what{\V}(Z_{j+1}^{(k+1)})]_{(l)} \sim  Gamma(n/2, n [\V(Z_{j+1}^{(k+1)})]_{(l)}^{-1}/2)$. \quad $\blacksquare$
\end{proof}

\begin{proof}[Proof of Corollary \ref{pluginmean}.]
	Since 
	\bea
	\bbE_{0n} \|\what{\Omega}_{nk}^{LL} - \Omega_{0,n}\| &\le& \bbE_{0n}\| \bbE^\pi(\Omega_{n}\mid \bfX_n) - \Omega_{0,n} \|  + \bbE_{0n}\| \bbE^\pi(\Omega_{n}\mid \bfX_n) - \what{\Omega}_{nk}^{LL} \| \\
	&\le& \bbE_{0n} \bbE^\pi(\|\Omega_{n} - \Omega_{0,n}\| \mid \bfX_n ) + \bbE_{0n}\| \bbE^\pi(\Omega_{n}\mid \bfX_n) - \what{\Omega}_{nk}^{LL} \|,
	\eea
	it suffices to prove
	\bea
	\bbE_{0n}\| \bbE^\pi(\Omega_{n}\mid \bfX_n) - \what{\Omega}_{nk}^{LL} \|_\infty  &\le& \frac{Ck^2}{n} \\
	&\le& k^{3/4}\left[ \left(\frac{k + \log(n\vee p)}{n} \right)^{1/2} + \gamma(k) \right]
	\eea
	for some constant $C>0$ because of the assumption $k(k + \log(n \vee p)) = O(n)$.
	
	
	Let $\what{\Omega}_{nk}^{LL} = ( \what{\Omega}_{ij}^{LL})$, then for $i<j\le i+k$,
	\bean
	\bbE_{0n}\left| \bbE^\pi(\Omega_{n,ij}\mid \bfX_n) - \what{\Omega}_{ij}^{LL}  \right| 
	&\le& 
	\bbE_{0n} \left| \bbE^\pi(d_{j}^{-1}a_{ji} \mid \bfX_n)  - \frac{n_j}{n}\what{d}_{jk}^{-1} \what{a}_{ji}^{(k)}  \right| \label{diffmean1} \\
	&+& \sum_{l=j+1}^{i+k} \bbE_{0n} \left|\bbE^\pi(d_{l}^{-1} a_{li} a_{lj} \mid \bfX_n)  - \frac{n_l}{n}\what{d}_{lk}^{-1}\what{a}_{li}^{(k)} \what{a}_{lj}^{(k)}  \right| \label{diffmean2}
	\eean
	by \eqref{omega_ij}.
	The \eqref{diffmean1} term can be decomposed by
	\bean
	&&\bbE_{0n} \left| \left( \bbE^\pi(d_{j}^{-1}a_{ji} \mid \bfX_n)  - \frac{n_j}{n}\what{d}_{jk}^{-1} \what{a}_{ji}^{(k)} \right) I(\bfX_n \in N_n)  \right| \label{diffmean11} \\
	&+& \bbE_{0n} \left| \left( \bbE^\pi(d_{j}^{-1}a_{ji} \mid \bfX_n)  - \frac{n_j}{n}\what{d}_{jk}^{-1} \what{a}_{ji}^{(k)} \right) I(\bfX_n \in N_n^c)  \right| \label{diffmean12}.
	\eean
	To deal with the above terms, we need to compute the expectation of truncated distributions. 
	When $Y$ is a truncated gamma distribution $Y \sim Gamma^{Tr}(\alpha, \beta, c_1 \le Y \le c_2)$, the expectation of $Y$ is 
	\bea
	\bbE Y &=& \frac{\alpha}{\beta} \frac{\int_{c_1}^{c_2}Gamma(y \mid \alpha+1, \beta) dy }{\int_{c_1}^{c_2}Gamma(y \mid \alpha, \beta) dy}
	\eea 
	(Coffey and Muller, 2000)\nocite{coffey2000properties}.
	Thus, one can show that \eqref{diffmean11} is bounded above by
	\bea
	&&\bbE_{0n}\left| \frac{n_j}{n}\what{d}_{jk}^{-1}\what{a}_{ji}^{(k)} \left( \frac{\int_{0}^{M} Gamma(d_{j}^{-1}\mid \frac{n_j}{2}+1, \frac{n}{2}\what{d}_{jk}) d d_{j}^{-1}  }{\int_{0}^{M} Gamma(d_{j}^{-1}\mid \frac{n_j}{2}, \frac{n}{2}\what{d}_{jk}) d d_{j}^{-1} } - 1 \right) I(\bfX_n\in N_n)  \right|  \\
	&\le& C_1 C_2^2 e^{-cn} 
	\eea
	for all sufficiently large $n$ and some positive constant $c$ by the same argument with \eqref{djk_concent}. 
	On the other hand, \eqref{diffmean12} is bounded above by
	\bea
	&& C \left[\bbE_{0n}(\what{d}_{jk}^{-2} (\what{a}_{ji}^{(k)})^2 ) \right]^{1/2}  \bbP_{0n}(\bfX_n \in N_n^c)\\
	&\lesssim& (k+1)^{7/2} \bbP_{0n}(\bfX_n \in N_n^c) \\
	&\le& \frac{1}{n^2}
	\eea
	for some constant $C>0$ and all sufficiently large $n$ by Lemma \ref{Nnset}, Lemma \ref{dhat_ahat} and the choice of large $C_3$ in the set $N_n$. 
	
	The \eqref{diffmean2} can be decomposed by
	\bean
	&& \sum_{l=j+1}^{i+k} \bbE_{0n} \left|\left(\bbE^\pi(d_{l}^{-1} a_{li} a_{lj} \mid \bfX_n)  - \frac{n_l}{n}\what{d}_{lk}^{-1}\what{a}_{li}^{(k)} \what{a}_{lj}^{(k)} \right) I(\bfX_n \in N_n) \right| \label{diffmean21} \\
	&+& \sum_{l=j+1}^{i+k} \bbE_{0n} \left|\left(\bbE^\pi(d_{l}^{-1} a_{li} a_{lj} \mid \bfX_n)  - \frac{n_l}{n}\what{d}_{lk}^{-1}\what{a}_{li}^{(k)} \what{a}_{lj}^{(k)} \right) I(\bfX_n \in N_n^c) \right|. \label{diffmean22}
	\eean
	Note that in \eqref{diffmean21},
	\bea
	\bbE^\pi(d_{l}^{-1} a_{li} a_{lj} \mid \bfX_n) &=& \bbE^\pi(d_{l}^{-1} \bbE^\pi (a_{li} a_{lj}\mid d_{l},\bfX_n) \mid \bfX_n) \\
	&=& \bbE^\pi(d_{l}^{-1} \bbE^\pi (a_{li}\mid d_{l},\bfX_n) \bbE^\pi( a_{lj} \mid d_{l},\bfX_n) \mid \bfX_n) \\
	&+& \bbE^\pi(d_{l}^{-1} \C^\pi (a_{li}, a_{lj}\mid d_{l},\bfX_n) \mid \bfX_n).
	\eea
	If we prove that $\sum_{l=j+1}^{i+k} \bbE_{0n}|\bbE^\pi(d_{l}^{-1} \C^\pi (a_{li}, a_{lj} \mid d_{l},\bfX_n) \mid \bfX_n)I(\bfX_n \in N_n)|  \lesssim k/n$, \eqref{diffmean21} is bounded above by $Ck/n$ for some constant $C>0$ by the similar arguments used in \eqref{diffmean11}. It is easy to show that
	\bea
	&& \sum_{l=j+1}^{i+k} \bbE_{0n} \left|\bbE^\pi(d_{l}^{-1} \C^\pi (a_{li}, a_{lj} | d_{l},\bfX_n) \mid \bfX_n)I(\bfX_n \in N_n)  \right|\\
	&\le& \sum_{l=j+1}^{i+k} \bbE_{0n} \left[ \bbE^\pi \left(d_{l}^{-1} \big|\C^\pi (a_{li}, a_{lj} | d_{l},\bfX_n)\big| \,\,\bigg|\,\, \bfX_n \right)   I(\bfX_n\in N_n) \right] \\
	&\le& \sum_{l=j+1}^{i+k} \bbE_{0n} \left(  \bbE^\pi \left( d_{l}^{-1} \left[ \V^\pi(a_{li} | d_{l},\bfX_n)  \V^\pi(a_{lj} | d_{l},\bfX_n)  \right]^{1/2}  \Big|\, \bfX_n \right)   I(\bfX_n\in N_n) \right) \\
	&\lesssim& \frac{k}{n}.
	\eea
	Similar to \eqref{diffmean12}, \eqref{diffmean22} is bounded above by $C/n^2$ for some constant $C>0$. 
	Thus, we have shown 
	\bea
	\bbE_{0n} \left|\bbE^\pi(\Omega_{n,ij}\mid \bfX_n) - \what{\Omega}_{ij}^{LL}  \right| &\lesssim& \frac{k}{n}
	\eea
	for any $i < j \le i+k$.
	Since $\Omega_{n,ii} = d_{i}^{-1} + \sum_{l=i+1}^{i+k}d_{l}^{-1} a_{li}^2$ for $i<p$ and $\Omega_{n,pp} = d_{p}^{-1}$, 
	\bea
	\bbE_{0n} \left|\bbE^\pi(\Omega_{n,ii}\mid \bfX_n) - \what{\Omega}_{ii}^{LL}  \right| &\lesssim& \frac{k}{n}
	\eea
	can be shown easily for $1\le i \le p$ by similar arguments.
	Thus, it implies 
	\bea
	\bbE_{0n}\| \bbE^\pi(\Omega_{n}\mid \bfX_n) - \what{\Omega}_{nk}^{LL} \|_\infty  &\lesssim& \frac{k^2}{n}.\quad \blacksquare
	\eea
\end{proof}

\appendix
\section{Appendix: auxiliary results}
\begin{lemma}\label{expineq}
	For any $x, n>0$,  
	\bea
	e^x &\le& \left(1+ \frac{x}{n}\right)^{n + x/2}.
	\eea
\end{lemma}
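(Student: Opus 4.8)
The plan is to take logarithms and reduce the claim to a single-variable inequality. Since $x,n>0$ we have $1+x/n>0$, so taking $\log$ of both sides shows that the asserted bound is equivalent to
$$x \,\le\, \left(n+\frac{x}{2}\right)\log\left(1+\frac{x}{n}\right).$$
Setting $t:=x/n>0$, substituting $x=nt$, and dividing through by $n$, this becomes
$$t \,\le\, \left(1+\frac{t}{2}\right)\log(1+t), \qquad\text{equivalently}\qquad \frac{2t}{2+t} \,\le\, \log(1+t).$$
Thus it suffices to establish this last inequality for every $t>0$.

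To do that I would introduce $f(t):=\log(1+t)-\dfrac{2t}{2+t}$ on $[0,\infty)$, note that $f(0)=0$, and differentiate:
$$f'(t)=\frac{1}{1+t}-\frac{4}{(2+t)^2}=\frac{(2+t)^2-4(1+t)}{(1+t)(2+t)^2}=\frac{t^2}{(1+t)(2+t)^2}\,\ge\,0$$
for all $t\ge 0$. Hence $f$ is nondecreasing on $[0,\infty)$, so $f(t)\ge f(0)=0$; this is exactly $\frac{2t}{2+t}\le\log(1+t)$. Running the reduction backwards — multiplying by $n$, recognising $(n+x/2)\log(1+x/n)$, and exponentiating — yields the stated inequality.

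There is essentially no hard step: the only points requiring a moment's care are that the passage through $\log$ is legitimate (the base $1+x/n$ is strictly positive because $x,n>0$) and that the algebraic simplification of $f'$ genuinely collapses to $t^2/\bigl((1+t)(2+t)^2\bigr)$, which is visibly nonnegative. As a calculus-free alternative I would remark that $\log(1+t)=2\,\mathrm{artanh}\!\bigl(t/(2+t)\bigr)$ together with the elementary bound $\mathrm{artanh}(u)\ge u$ for $u\in[0,1)$ gives the same inequality in one line.
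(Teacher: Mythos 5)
Your proof is correct and follows essentially the same route as the paper: take logarithms, reduce to the single-variable inequality $\left(1+\tfrac{t}{2}\right)\log(1+t)\ge t$, and establish it by checking the derivative vanishes at $0$ and stays nonnegative. The only small difference is that you rearrange the target to $\log(1+t)\ge \tfrac{2t}{2+t}$ so that the derivative collapses to the manifestly nonnegative $t^2/\bigl((1+t)(2+t)^2\bigr)$, whereas the paper differentiates $(1+y/2)\log(1+y)-y$ directly and then appeals to the auxiliary bound $\log(1+y)\ge 1-1/(1+y)$; your version is thus slightly more self-contained.
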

The proof can be obtained by a simple algebra. 

\begin{lemma}\label{Al1norm}
	If we assume that $\Omega_{0,n}\in \cU(\epsilon_0,\gamma)$ and $\sum_{k=1}^\infty\gamma(k) < \infty$, then
	\bea
	\|\Omega_{0,n}\|_\infty &<& C
	\eea
	for some $C>0$  not depending on $p$. 
\end{lemma}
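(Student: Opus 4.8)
The plan is to read everything off the modified Cholesky factorization $\Omega_{0,n}=(I_p-A)^T D^{-1}(I_p-A)$ furnished by \eqref{chol}, where $A=(a_{ij})$ is the (lower triangular, zero diagonal) Cholesky factor and $D=\mathrm{diag}(d_j)$. Since $\Omega_{0,n}$ is symmetric, its matrix $\ell_\infty$ norm equals its matrix $\ell_1$ norm, and by submultiplicativity of the operator norms,
\bea
\|\Omega_{0,n}\|_\infty &\le& \|(I_p-A)^T\|_\infty\,\|D^{-1}\|_\infty\,\|I_p-A\|_\infty \\
&=& \|I_p-A\|_1\,\|D^{-1}\|_\infty\,\|I_p-A\|_\infty \\
&\le& (1+\|A\|_1)\,\|D^{-1}\|_\infty\,(1+\|A\|_\infty).
\eea
Because $D^{-1}$ is diagonal, $\|D^{-1}\|_\infty=\max_j d_j^{-1}$, and as observed at the start of the proof of Proposition~\ref{equi_para}, membership in $\cU(\epsilon_0,\gamma)$ forces $\|D^{-1}\|\le\epsilon_0^{-1}$ (each $d_j$ is a conditional variance, hence bounded below by $\lambda_{\min}(\sg_{0,n})\ge\epsilon_0$) and $\|A\|_{\max}\le\epsilon_0^{-2}$. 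So it remains only to bound $\|A\|_1$ and $\|A\|_\infty$ by constants independent of $p$.

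For the row sums, split $A$ into its first sub-diagonal and the rest: $B_1(A)$ has $i$th row sum $|a_{i,i-1}|\le\|A\|_{\max}\le\epsilon_0^{-2}$, while the banding hypothesis with $k=1$ gives $\|A-B_1(A)\|_\infty\le\gamma(1)$; hence $\|A\|_\infty\le\epsilon_0^{-2}+\gamma(1)$. For the column sums I would convert the row-wise tail condition into an entrywise bound: the entry $a_{j+m,j}$ sits in row $j+m$ at distance $m$ to the left of the diagonal, so for $m\ge2$ it is one of the terms in $\sum_{l<(j+m)-(m-1)}|a_{j+m,l}|\le\gamma(m-1)$, giving $|a_{j+m,j}|\le\gamma(m-1)$, and for $m=1$ simply $|a_{j+1,j}|\le\|A\|_{\max}\le\epsilon_0^{-2}$. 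Summing over a column,
\bea
\sum_{i>j}|a_{ij}|=\sum_{m\ge1}|a_{j+m,j}| &\le& \epsilon_0^{-2}+\sum_{m\ge2}\gamma(m-1) \,\,=\,\, \epsilon_0^{-2}+\sum_{m=1}^\infty\gamma(m),
\eea
which is finite by the hypothesis $\sum_m\gamma(m)<\infty$ and does not depend on $p$; hence $\|A\|_1\le\epsilon_0^{-2}+\sum_{m=1}^\infty\gamma(m)$.

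Substituting these bounds into the first display yields $\|\Omega_{0,n}\|_\infty\le\epsilon_0^{-1}\bigl(1+\epsilon_0^{-2}+\sum_{m=1}^\infty\gamma(m)\bigr)\bigl(1+\epsilon_0^{-2}+\gamma(1)\bigr)=:C$, a constant depending only on $\epsilon_0$ and $\gamma$, which proves the lemma. There is no serious obstacle here; the only step requiring a little care is the passage from the row-wise decay condition $\|A-B_k(A)\|_\infty\le\gamma(k)$ to a uniform bound on the column sums of $A$, since a priori a column has up to $p$ nonzero entries — this is exactly where the summability of $\gamma$ enters, and one must verify that the resulting constant is genuinely free of $p$.
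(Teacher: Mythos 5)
Your proof is correct and follows the same route as the paper: bound $\|\Omega_{0,n}\|_\infty$ via submultiplicativity applied to the MCD factors, bound $\|D^{-1}\|_\infty$ by $\epsilon_0^{-1}$, and control $\|A\|_1$ and $\|A\|_\infty$ using the banding condition (the paper's proof states $|a_{ij}|\le\gamma(i-j)$ and $\|A\|_\infty\le\gamma(1)$, both small off-by-one/sloppiness issues that your treatment of the first sub-diagonal via $\|A\|_{\max}\le\epsilon_0^{-2}$ correctly patches, and it isolates exactly where $\sum_m\gamma(m)<\infty$ is needed). This is essentially the paper's argument done more carefully, not a different method.
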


\begin{proof}[Proof]
	Let $\Omega_{0,n} =(I_p - A_{0,n})^T D_{0,n}^{-1} (I_p -A_{0,n})$ be the MCD of $\Omega_{0,n}$. Since $\|\Omega_{0,n}\|_\infty \le \|I_p-A_{0,n}\|_1 \|D_{0,n}^{-1}\|_\infty \|I_p-A_{0,n}\|_\infty$ 
	and
	\begin{align}
	\|I_p-A_{0,n}\|_\infty &\le 1 + \| A_{0,n}\|_\infty \le 1 + \gamma(1), \nonumber \\
	\begin{split}
	\|D_{0,n}^{-1}\|_\infty &= \max_j d_j^{-1} \\
	&= \max_j  \Big\|\V^{1/2}(Z_{j+1})\cdot \binom{-a_j}{1}  \Big\|_2^{-2} \\
	&\le \max_j \lambda_{\min}\left(\V(Z_{j+1}) \right)^{-1} = \max_j \left\| \V^{-1}(Z_{j+1}) \right\| \le \epsilon_0^{-1},
	\end{split}\label{D0inv_upperbound}
	\end{align}
	 we only need to prove $\|A_{0,n}\|_1 \le C$ for some $C>0$. By the definition of $\cU(\epsilon_0,\gamma)$, it is easy to show $|a_{ij}| \le \gamma(i-j)$ for all $i>j$. Thus,
	\bea
	\|A_{0,n}\|_1 &=& \max_j \sum_{i=j+1}^p |a_{ij}|\\
	&\le& \max_j \sum_{i=j+1}^{p} \gamma(i-j) \\
	&\le& \sum_{m=1}^\infty \gamma(m)\,\, < \,\, \infty. \quad \blacksquare
	\eea
\end{proof}

\begin{lemma}\label{spec_partitioned}
	For any positive integers $p_1$ and $p_2$, let $A_{11}, A_{12}$ and $A_{22}$ be a $p_1\times p_1, p_1\times p_2$ and $p_2\times p_2$ matrix, 
	$$ \|A_{12}\| \leq \left\|  \begin{pmatrix}
	A_{11} & A_{12} \\
	A_{12}^T & A_{22}
	\end{pmatrix}\right\|, $$
	where $||\cdot ||$ is the matrix $L_2$ norm. 
\end{lemma}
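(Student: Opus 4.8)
The plan is to use the variational (operator-norm) characterization of the spectral norm, restricted to a coordinate subspace. Write
\bea
M &=& \begin{pmatrix} A_{11} & A_{12} \\ A_{12}^T & A_{22} \end{pmatrix},
\eea
which is a $(p_1+p_2)\times(p_1+p_2)$ matrix, and recall that $\|M\| = \sup_{x\in\bbR^{p_1+p_2},\,\|x\|_2=1}\|Mx\|_2$.

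First I would fix an arbitrary $v\in\bbR^{p_2}$ with $\|v\|_2=1$ and form the padded block vector $x = (0^T, v^T)^T \in \bbR^{p_1+p_2}$, which still satisfies $\|x\|_2 = 1$. Computing the block product gives $Mx = ((A_{12}v)^T, (A_{22}v)^T)^T$, hence
\bea
\|Mx\|_2^2 &=& \|A_{12}v\|_2^2 + \|A_{22}v\|_2^2 \,\,\ge\,\, \|A_{12}v\|_2^2.
\eea
Therefore $\|M\| \ge \|Mx\|_2 \ge \|A_{12}v\|_2$. Since $v$ was an arbitrary unit vector, taking the supremum over all such $v$ yields $\|M\| \ge \sup_{\|v\|_2=1}\|A_{12}v\|_2 = \|A_{12}\|$, which is the claim.

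There is no real obstacle here: the only point requiring care is the bookkeeping with the block partition and the observation that zero-padding preserves the Euclidean norm, so that the restricted supremum (over vectors supported on the last $p_2$ coordinates) is a lower bound for the full supremum defining $\|M\|$. An equivalent route would be to write $\|A_{12}\| = \sup_{\|u\|_2=\|v\|_2=1} u^T A_{12} v$ and pad both $u$ and $v$ with zeros into $\bbR^{p_1+p_2}$, noting $u^T A_{12} v = \tilde u^T M \tilde v \le \|M\|$; but the one-sided padding above is the most economical.
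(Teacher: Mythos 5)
Your argument is correct and coincides with the paper's own proof: both restrict the supremum in the variational definition of $\|M\|$ to zero-padded vectors $x=(0^T,v^T)^T$ supported on the last $p_2$ coordinates and then drop the $A_{22}v$ block. No differences of substance.
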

\begin{proof}[Proof]
	Note
	\bea
	\left\|  \begin{pmatrix}
		A_{11} & A_{12} \\
		A_{12}^T & A_{22}
	\end{pmatrix}\right\| 
	&=& \sup_{\|x\|_2=1} \left\| \begin{pmatrix}
		A_{11} & A_{12} \\ A_{12}^T & A_{22}
	\end{pmatrix}x  \right\|_2 \\ 
	&=& \sup_{\|x\|_2=1} \left\| \binom{A_{11}x_1+A_{12}x_2}{A_{22} x_2+A_{12}^T x_1} \right\|_2 \\
	&\ge& \sup_{\|x_2\|_2=1} \left\| \binom{A_{12}x_2}{A_{22} x_2} \right\|_2 \,\,\ge\,\,  \sup_{\|x_2\|_2=1} \| A_{12} x_2\|_2 = \| A_{12} \|
	\eea
	where $x=(x_1^T, x_2^T)^T$ and $x_1 \in \bbR^{p_1}, x_2\in \bbR^{p_2}$. This completes the proof. \quad $\blacksquare$
\end{proof}

\begin{lemma}\label{diffAbound}
	If we assume that $\Omega_{0,n} \in \cU(\epsilon_0,\gamma)$, then
	\bea
	\| A_{0,nk} - A_{0,n} \|_\infty &\le& C \sqrt{k}\gamma(k),\\
	\| A_{0,nk} - A_{0,n} \|_1 &\le& C k\gamma(k)
	\eea
	for some $C>0$.
\end{lemma}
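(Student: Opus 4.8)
The plan is to fix a row index $j$ and relate the full population regression coefficient $a_{0,j}=\V^{-1}(Z_j)\C(Z_j,Y_j)$ (the $j$-th row of $A_{0,n}$ below the diagonal) to the $k$-banded one $a_{0,j}^{(k)}=\V^{-1}(Z_j^{(k)})\C(Z_j^{(k)},Y_j)$ through the population Frisch--Waugh (omitted--variable) identity. If $j\le k+1$ then $Z_j^{(k)}=Z_j$ and the $j$-th rows of $A_{0,nk}$ and $A_{0,n}$ agree, so assume $j>k+1$ and write $Z_j=(W^T,(Z_j^{(k)})^T)^T$ with $W=(Y_1,\dots,Y_{j-k-1})^T$ the ``far'' regressors, and $a_{0,j}=(a_W^T,a_N^T)^T$ the conformal split of the full coefficient. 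From $Y_j=a_W^TW+a_N^TZ_j^{(k)}+\varepsilon$ with $\varepsilon$ uncorrelated with $Z_j$ one gets $\C(Z_j^{(k)},Y_j)=\C(Z_j^{(k)},W)a_W+\V(Z_j^{(k)})a_N$, hence
\[
a_{0,j}^{(k)}-a_N=\Delta_j\,a_W,\qquad \Delta_j:=\V^{-1}(Z_j^{(k)})\,\C(Z_j^{(k)},W).
\]
Consequently the $j$-th row of $E:=A_{0,nk}-A_{0,n}$ consists of the dropped far coefficients $-a_{jl}$ at positions $l<j-k$, the vector $\Delta_j a_W$ spread over the band $j-k\le l<j$, and zeros elsewhere.

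Next I would bound $\|\Delta_j\|$ uniformly in $j$: since $\V(Z_j^{(k)})$ is a principal submatrix of $\sg_{0,n}=\Omega_{0,n}^{-1}$, $\|\V^{-1}(Z_j^{(k)})\|=\lambda_{\min}(\V(Z_j^{(k)}))^{-1}\le\lambda_{\min}(\sg_{0,n})^{-1}=\lambda_{\max}(\Omega_{0,n})\le\epsilon_0^{-1}$, while $\|\C(Z_j^{(k)},W)\|\le\|\sg_{0,n}\|\le\epsilon_0^{-1}$ by Lemma~\ref{spec_partitioned} (viewing $\C(Z_j^{(k)},W)$ as an off-diagonal block of the principal submatrix $\V(Z_j)$ of $\sg_{0,n}$); hence $\|\Delta_j\|\le\epsilon_0^{-2}$. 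For the matrix $\ell_\infty$ norm, the $\ell_1$ norm of row $j$ of $E$ equals $\|\Delta_j a_W\|_1+\sum_{l<j-k}|a_{jl}|$. The bandable condition defining $\cU(\epsilon_0,\gamma)$ gives $\sum_{l<j-k}|a_{jl}|\le\gamma(k)$ directly, and $\|\Delta_j a_W\|_1\le\sqrt{k}\,\|\Delta_j\|\,\|a_W\|_2\le\sqrt{k}\,\epsilon_0^{-2}\,\|a_W\|_1\le\sqrt{k}\,\epsilon_0^{-2}\gamma(k)$. Maximizing over $j$ yields $\|A_{0,nk}-A_{0,n}\|_\infty\le(1+\epsilon_0^{-2})\sqrt{k}\,\gamma(k)$.

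For the matrix $\ell_1$ norm I would instead fix a column $l$ of $E$. Its entries in rows $l<j\le l+k$ (at most $k$ of them) are individual coordinates of the vectors $\Delta_j a_W^{(j)}$, each of modulus at most $\|\Delta_j a_W^{(j)}\|_2\le\epsilon_0^{-2}\gamma(k)$; its entries in rows $j>l+k$ are $-a_{jl}$. Since $\gamma$ is decreasing the bandable condition forces $|a_{jl}|\le\gamma(j-l-1)$, so $\sum_{j>l+k}|a_{jl}|\le\sum_{m\ge k}\gamma(m)$, and therefore $\|A_{0,nk}-A_{0,n}\|_1\le k\epsilon_0^{-2}\gamma(k)+\sum_{m\ge k}\gamma(m)\le Ck\gamma(k)$, using that $\sum_{m\ge k}\gamma(m)=O(k\gamma(k))$ for the exponentially decreasing, exact-banding and polynomially decreasing (summable-exponent) $\gamma$ considered in this paper. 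Apart from the routine matrix-norm algebra, the only real content is the population omitted-variable identity together with the observation, via Lemma~\ref{spec_partitioned}, that the ``retained-on-omitted projection'' operator $\Delta_j$ has spectral norm at most $\epsilon_0^{-2}$ uniformly in $j$; the one point that needs care is the far-range column sum, which is precisely where the rate of decay of $\gamma$ enters.
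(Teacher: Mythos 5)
Your proof is correct and is essentially the paper's argument, written in the dual block form: your Frisch--Waugh factor $\Delta_j=\V^{-1}(Z_j^{(k)})\,\C(Z_j^{(k)},W)$ is exactly the paper's $-\Omega_{21,j}\Omega_{11,j}^{-1}$ (the same quantity expressed via blocks of $\V(Z_j)$ rather than of $\V^{-1}(Z_j)$), and both are bounded in spectral norm by $\epsilon_0^{-2}$ through Lemma~\ref{spec_partitioned} and the eigenvalue constraint. The decomposition of the error into the dropped far tail (yielding $\gamma(k)$ for $\ell_\infty$, $\sum_{m\ge k}\gamma(m)$ for $\ell_1$) and the in-band perturbation (yielding $\sqrt{k}\gamma(k)$, resp.\ $k\gamma(k)$) also matches the paper's split through $B_k(A_{0,n})$, so there is no genuinely different route here.
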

\begin{proof}[Proof of Lemma \ref{diffAbound}]
	We only consider $k<j-1$ case because $A_{0,nk} = A_{0,n}$ trivially holds when $k\ge j-1$.
	Note first that 
	\bea
	\| A_{0,nk} - A_{0,n} \|_\infty &\le& \| A_{0,nk} - B_k(A_{0,n}) \|_\infty + \| B_k(A_{0,n}) - A_{0,n} \|_\infty.
	\eea
	The second term is bounded above by $\gamma(k)$ by the definition of $\cU(\epsilon_0,\gamma)$. Denote
	\bea
	\V^{-1}(Z_j) &=& \begin{pmatrix}
		\Omega_{11,j} & \Omega_{12,j}\\
		\Omega_{21,j} & \Omega_{22,j}
	\end{pmatrix}, \\
	\C( Z_j , X_j) &=& \binom{\sg_{1j}}{\sg_{2j}},
	\eea
	where $\Omega_{11,j}$ is a $(j-k-1)\times (j-k-1)$ matrix, $\Omega_{22,j}$ is a $k\times k$ matrix and $\sg_{2j} =\C(Z_j^{(k)}, X_j)$ is a $k$ dimensional vector. 
	Since $\max_j\|a_{0,j} - B_{k-1,j}(a_{0,j})\|_1 \le \gamma(k)$ by assumption, it directly implies 
	\bea
	\max_j \| \Omega_{11,j}\sg_{1j} + \Omega_{12,j}\sg_{2j}\|_1 &\le& \gamma(k).
	\eea
	With this fact, we have the following upper bound for $\|A_{0,nk} - B_k(A_{0,n}) \|_{\infty}$,
	\bea
	\|A_{0,nk} - B_k(A_{0,n}) \|_{\infty} &=& 
	\max_j\|a_{0,j}^{(k)} - B_{k-1,j}(a_{0,j})\|_{1} \\
	&=& \max_j  \|\Omega_{21,j}\Omega_{11,j}^{-1} ( \Omega_{11,j}\sg_{1j}+\Omega_{12,j}\sg_{2j})\|_{1} \\
	&\le& \max_j \|\Omega_{21,j}\Omega_{11,j}^{-1}\|_{1} \|\Omega_{11,j}\sg_{1j}+\Omega_{12,j}\sg_{2j}\|_1 \\
	&\le& \max_j \sqrt{k}\|\Omega_{21,j}\| \|\Omega_{11,j}^{-1}\| \gamma(k) \\
	&\le& C \sqrt{k}\gamma(k)
	\eea
	for some $C>0$. 
	The last inequality holds by Lemma \ref{spec_partitioned} because $\|\V(Z_j)^{-1}\| \le C$ holds for some $C>0$. It proves the first part of Lemma \ref{diffAbound}. 	
	
	To show the second argument of Lemma \ref{diffAbound}, note that
	\bea
	\| A_{0,nk} - A_{0,n} \|_1 &\le& \| A_{0,nk} - B_k(A_{0,n}) \|_1 + \| B_k(A_{0,n}) - A_{0,n} \|_1.
	\eea 
	The first term is bounded above by
	\bea
	\| A_{0,nk} - B_k(A_{0,n}) \|_1 &\le& k \max_j \|a_{0,j}^{(k)} - B_{k-1,j}(a_{0,j})\|_{\max} \\
	&\le& k \max_j \|\Omega_{21,j}\Omega_{11,j}^{-1}\| \|\Omega_{11,j}\sg_{1j}+\Omega_{12,j}\sg_{2j}\|_2 \\
	&\le& C k \gamma(k)
	\eea
	for some $C>0$. Also note that 
	\bea
	\|B_k(A_{0,n}) - A_{0,n} \|_1 &=& \sum_{i=j+k}^p |a_{ij}| \\
	&\le& \sum_{i=j+k}^p \gamma(i-j) \\
	&\le& \sum_{m=k}^\infty \gamma(m).
	\eea
	If we assume the polynomially decreasing $\gamma(k)= Ck^{-\alpha}$, we have $\sum_{m=k}^\infty \gamma(m) \le C' k \gamma(k)$ for some constant $C'>0$.
	If we assume the exact band or exponentially decreasing $\gamma(k) = Ce^{-\beta k}$, it is easy to show that $\sum_{m=k}^\infty \gamma(m) \le C' \gamma(k)$ for some constant $C'>0$. Thus, $\|A_{0,nk} - A_{0,n}\|_1$ is bounded above by $C''k\gamma(k)$ for some constant $C''>0$. \quad $\blacksquare$
\end{proof}

\bibliographystyle{plain}
\bibliography{cholcov}

\end{document}